\newtheorem{theorem}{Theorem}[section]
\newtheorem{lem}[theorem]{Lemma}
\newtheorem{kor}[theorem]{Corollary}
\newtheorem{prop}[theorem]{Proposition}
\theoremstyle{definition}
\newtheorem{dfn}[theorem]{Definition}
\newtheorem{rem}[theorem]{Remark}
\definecolor{orange}{rgb}{1.0, 0.55, 0.0}
\DeclareMathOperator*{\divv}{div}
\begin{document}
	\title{Linearization and a superposition principle for deterministic and stochastic nonlinear Fokker-Planck-Kolmogorov equations}

	\author{Marco Rehmeier\footnote{Faculty of Mathematics, Bielefeld University, 33615 Bielefeld, Germany. E-Mail: mrehmeier@math.uni-bielefeld.de }}
	
	\date{}
	\maketitle
	\begin{abstract}
		We prove a superposition principle for nonlinear Fokker-Planck-Kolmogorov equations on Euclidean spaces and their corresponding linearized first-order continuity equation over the space of Borel (sub-)probability measures. As a consequence, we obtain equivalence of existence and uniqueness results for these equations. Moreover, we prove an analogous result for stochastically perturbed Fokker-Planck-Kolmogorov equations. To do so, we particularly show that such stochastic equations for measures are, similarly to the deterministic case, intrinsically related to linearized second-order equations on the space of Borel (sub-)probability measures.
	\end{abstract}
	
	\noindent	\textbf{Keywords:} Nonlinear Fokker-Planck equation, McKean-Vlasov stochastic differential equation, diffusion process, superposition principle\\ \\
	\textbf{2010 MSC}: 60J60, 58J65
	
\section{Introduction}
In this work we are concerned with nonlinear Fokker-Planck-Kolmogorov equations (FPK-equations) on $\mathbb{R}^d$, both deterministic
\begin{equation}\label{NLFPKE}\tag{NLFPK}
\partial_t\mu_t = \mathcal{L}^*_{t,\mu_t}\mu_t, \,\, t \in [0,T],
\end{equation}and perturbed by a first-order stochastic term driven by a finite-dimensional Wiener process $W$
\begin{equation}\label{SNLFPKE}\tag{SNLFPK}
\partial_t\mu_t = \mathcal{L}^*_{t,\mu_t}\mu_t-\text{div}(\sigma(t,\mu_t))dW_t,\,\, t\in [0,T],
\end{equation}with solutions being continuous curves of subprobability measures $\mu_t \in \mathcal{SP}$. Here, $\mathcal{L}^*$ denotes the formal dual of a second-order differential operator acting on sufficiently smooth functions $\varphi: \mathbb{R}^d \to \mathbb{R}$ via
\begin{equation}\label{1}
\mathcal{L}_{t,\mu}\varphi(x) = \sum_{i,j=1}^{d}a_{ij}(t,\mu,x)\partial^2_{ij}\varphi(x)+\sum_{i=1}^{d}b_i(t,\mu,x)\partial_i\varphi(x)
\end{equation}with coefficients $a$ and $b$ depending on $(t,x) \in [0,T]\times \mathbb{R}^d$ and (in general non-locally) on the solution $\mu_t$. These equations are to be understood in distributional sense, see Definition \ref{Sol NLFPKE} and \ref{Def sol SNLFPKE}. The nonlinearity arises from the dependence of $\mathcal{L}$ and $\sigma$ on the solution itself, which renders the theory of existence and uniqueness of such equations significantly more difficult compared to the linear case. For a thorough introduction to the field, we refer to \cite{bogachev2015fokker} and the references therein. As shown in \cite{Rckner-lin.-paper}, the deterministic nonlinear equation (\ref{NLFPKE}) is naturally associated to a first-order linear continuity equation on $\mathcal{P}(\mathcal{SP})$, the space of Borel probability measures on $\mathcal{SP}$, of type
\begin{equation}\label{P-CE}\tag{$\mathcal{SP}$-CE}
\partial_t \Gamma_t = \mathbf{L}^*_t\Gamma_t, \,\, t\in [0,T],
\end{equation}in the sense of distributions, with the linear operator $\mathbf{L}$ acting on sufficiently smooth real functions on $\mathcal{SP}$ via the gradient operator $\nabla^{\mathcal{SP}}$ on $\mathcal{SP}$ as
$$\mathbf{L}_tF = \big \langle \nabla^{\mathcal{SP}}F, b_t+a_t\nabla \big \rangle_{L^2}.$$ Precise information on this operator and equation (\ref{P-CE}) are given in Section 3, in particular in Definition \ref{Def sol P-CE} and the paragraph  preceding it.\\
 Our first main result, Theorem \ref{main thm det case} states that each weakly continuous solution $(\Gamma_t)_{t \leq T}$ to (\ref{P-CE}) is a \textit{superposition} of solutions to (\ref{NLFPKE}), i.e. (denoting by $e_t$ the canonical projection $e_t: (\mu_t)_{t \leq T} \mapsto \mu_t$)
\begin{equation}\label{SuperPos eq}
\Gamma_t = \eta \circ e_t^{-1}
\end{equation}
for some probability measure $\eta$ concentrated on solution curves to (\ref{NLFPKE}) in a suitable sense.\\
 We also treat the stochastic case in a similar fashion. More precisely, in Section 4 we establish a new correspondence between the stochastic equation for measures (\ref{SNLFPKE}) and a corresponding second-order equation for curves $(\Gamma_t)_{t \leq T}$ in $\mathcal{P}(\mathcal{SP})$ of type
 \begin{equation}\label{P-FPKE}\tag{$\mathcal{SP}$-FPK}
 \partial_t \Gamma_t = (\mathbf{L}_t^{(2)})^*\Gamma_t,\,\, t \in [0,T],
 \end{equation}where, roughly, $$\mathbf{L}^{(2)}_t = \mathbf{L}_t+ \textit{ second-order perturbation}. $$ The second-order term stems from the stochastic perturbation of (\ref{SNLFPKE}) and will be geometrically interpreted in terms of a (formal) notion of the Levi-Civita connection on $\mathcal{SP}$. The second main result of this work, Theorem \ref{main thm stoch case}, is then the stochastic generalization of the deterministic case: For any solution $(\Gamma_t)_{t \leq T}$ to (\ref{P-FPKE}), there exists a solution process $(\mu_t)_{t \leq T}$ to (\ref{SNLFPKE}) on some probability space such that $\mu_t$ has distribution $\Gamma_t$. We stress that in both cases, we do not require any regularity of the coefficients.\\
 \\
 Let us embed these results into the general research in this direction. Let $b_t(\cdot): \mathbb{R}^d \to \mathbb{R}^d$ be an inhomogeneous vector field and consider the (nonlinear) ODE
 \begin{equation}\label{ODE intro}\tag{ODE}
 \frac{d}{dt}\gamma_t = b_t(\gamma_t), \,\, t \leq T
 \end{equation}and the linear continuity equation for curves of Borel (probability) measures on $\mathbb{R}^d$
 \begin{equation}\label{CE intro}\tag{CE}
 \partial_t \mu_t = -\divv (b_t\mu_t), \,\, t \leq T,
 \end{equation}understood in distributional sense. In the seminal paper \cite{Ambrosio2008}, L. Ambrosio showed the following: Any (probability) solution $(\mu_t)_{t \leq T}$ to (\ref{CE intro}) with an appropriate global integrability condition is a \textit{superposition} of solution curves to (\ref{ODE intro}), i.e. there exists a (probability) measure $\eta$ on the space of continuous paths with values in the state space of (\ref{ODE intro}), $C([0,T],\mathbb{R}^d)$, which is concentrated on solutions to (\ref{ODE intro}) such that
 $$\eta \circ e_t^{-1} = \mu_t, \,\, t\leq T.$$This allows to transfer existence and uniqueness results between the linear equation (\ref{CE intro}) and the nonlinear (\ref{ODE intro}). However, the linear equation must be studied on an infinite-dimensional space of (probability) measures. The analogy to our deterministic result from Section 3 is as follows: (\ref{ODE intro}) is replaced by (\ref{NLFPKE}), which, in spirit of this analogy, we interpret as a differential equation on the manifold-like state space $\mathcal{SP}$. Likewise, (\ref{CE intro}) is replaced by (\ref{P-CE}) and our first main result Theorem \ref{main thm det case} may be understood as the analogue of Ambrosio's result to the present setting. By passing from (\ref{NLFPKE}) to (\ref{P-CE}), we \textit{linearize} the equation.\\
 Concerning the stochastic case, consider a stochastic differential equation on $\mathbb{R}^d$
 \begin{equation}\label{SDE}\tag{SDE}
 dX_t = b(t,X_t)dt+\tilde{a}(t,X_t)dB_t, \,\, t\in [0,T].
 \end{equation} By Itô's formula, the one-dimensional marginals $\mu_t$ of any (weak) martingale solution $X$ solve the corresponding linear FPK-equation
 \begin{equation}\label{FPKE}\tag{FPK}
 \partial_t \mu_t = \mathcal{L}_{lin, t}^*\mu_t, \,\, t \in [0,T],
 \end{equation} where $\mathcal{L}_{lin}$ is a linear second-order diffusion operator with coefficients $b$ and $\frac{1}{2}\tilde{a}\tilde{a}^T$. Conversely, a superposition principle has successively been developed in increasingly general frameworks (cf. \cite{FIGALLI2008109, Kurtz2011, trevisan2016, Rckner-superpos_pr}): Under mild global integrability assumptions, for every weakly continuous solution curve of probability measures $(\mu_t)_{t \leq T}$ to (\ref{FPKE}), there exists a (weak) martingale solution $X$ to (\ref{SDE}) with one-dimensional marginals $(\mu_t)_{t \leq T}$, thereby providing an equivalence between solutions to (\ref{SDE}) and (\ref{FPKE}), which offers a bridge between probabilistic and analytic approaches to diffusion processes. As in the deterministic case, the transition from (\ref{SDE}) to (\ref{FPKE}) provides a \textit{linearization}, while at the same time it transfers the equation to a much higher dimensional state space. Concerning our stochastic result Theorem \ref{main thm stoch case}, we replace the stochastic equation on $\mathbb{R}^d$ by the stochastic equation for measures (\ref{SNLFPKE}) and the corresponding second-order equation for measures (\ref{FPKE}) by (\ref{P-FPKE}) and prove an analogous superposition result for solutions to the latter equation.\\
  The proofs of both the deterministic and stochastic result rely on superposition principles for differential equations on $\mathbb{R}^{\infty}$ and the corresponding continuity equation (for the deterministic case) and for martingale solutions and FPK-equations on $\mathbb{R}^{\infty}$ (for the stochastic case) by Ambrosio and Trevisan (\cite{ambrosio2014}, \cite{TrevisanPhD}). The key technique is to transfer (\ref{P-CE}) and (\ref{NLFPKE}) (and, similarly, (\ref{P-FPKE}) and (\ref{SNLFPKE}) for the stochastic case) to suitable equations on $\mathbb{R}^{\infty}$ via a homeomorphism between $\mathcal{SP}$ and $\mathbb{R}^{\infty}$ (replaced by $\ell^2$ for the stochastic case, in order to handle the stochastic integral).\\
 Moreover, our results also blend into the theory of \textit{distribution dependent} stochastic differential equations, also called \textit{McKean-Vlasov equations}, i.e. stochastic equations on Euclidean space of type
 \begin{equation}\label{DDSDE}\tag{DDSDE}
 dX_t = b(t,\mathcal{L}_{X_t},X_t)dt+\tilde{a}(t,\mathcal{L}_{X_t},X_t)dB_t,\,\, t\in [0,T],
 \end{equation}
 see the classical papers \cite{McKean1907, Funaki, scheutzow_1987} as well as the more recent works \cite{HUANG20194747, 1078-0947_2019_6_3017, Coghi2019StochasticNF}. Here, $\mathcal{L}_{X_t}$ denotes the distribution of $X_t$ and is not to be confused with the operators $\mathcal{L}_{t,\mu}$ and $\mathcal{L}_t$ from above. As in the non-distribution dependent case, where the curve of marginals of any solution to (\ref{SDE}) solves an equation of type (\ref{FPKE}), a similar observation holds here: Each solution $X$ to (\ref{DDSDE}) provides a solution to a nonlinear FPK-equation of type (\ref{NLFPKE}) via $\mu_t = \mathcal{L}_{X_t}$ and a corresponding superposition principle holds analogously to the linear case as well (\cite{barbu2020, doi:10.1137/17M1162780}).\\
However, while for (\ref{SDE}) the passage to (\ref{FPKE}) provides a complete linearization, the situation is different for equations of type (\ref{NLFPKE}). This stems from the observation that (\ref{DDSDE}) is an equation with two sources of nonlinearity. Hence, it seems natural to linearize (\ref{NLFPKE}) once more in order to obtain a linear equation, which is related to (\ref{DDSDE}) and (\ref{NLFPKE}) in a natural way. By the results of \cite{Rckner-lin.-paper}, this linear equation is of type (\ref{P-CE}). Similar considerations prevail in the stochastic case, where one considers equations of type (\ref{DDSDE}) with an additional source of randomness (we shall not pursue this direction in this work). \\
 \\
On the one hand, the superposition principles of Theorem \ref{main thm det case} and Theorem \ref{main thm stoch case} provide new structural results for nonlinear FPK-equations and its corresponding linearized equations on the space of probability measures over $\mathcal{SP}$, involving a geometric interpretation of the latter. On the other hand, it is our future plan to further study the geometry of $\mathcal{SP}$ as initiated in \cite{Rckner-lin.-paper} and this work to develop an analysis on such infinite-dimensional manifold-like spaces, which allows to solve linear equations of type (\ref{P-CE}) and (\ref{P-FPKE}) on such spaces. By means of the results of this work, one can then lift such solutions to solutions to the nonlinear equations for measures (\ref{NLFPKE}) and (\ref{SNLFPKE}), thereby obtaining new existence results for these nonlinear equations for measures.\\
 \\
 We point out that although our main aim is to lift weakly continuous solutions to (\ref{P-CE}) and (\ref{P-FPKE}) concentrated on probability measures to a measure on the space of continuous probability measure-valued paths $(\mu_t)_{t \leq T}$, for technical reasons we more generally develop our results for vaguely continuous subprobability solutions (i.e. $\mu_t \in \mathcal{SP}$). We comment on the advantages of this approach in Remark \ref{Rem explain why SP} for the deterministic case and note that similar arguments prevail in the stochastic case as well. However, due to the global integrability assumptions we consider, we are able to obtain results for probability solutions as desired.\\
 \\
 The organization of this paper is as follows. After introducing general notation and recalling basic properties of the spaces $\mathcal{P}$ and $\mathcal{SP}$ in Section 2, Section 3 contains the deterministic case, i.e. the superposition principle between solutions to (\ref{P-CE}) and (\ref{NLFPKE}). Here, the main result is Theorem \ref{main thm det case}. We use this result to prove an open conjecture of \cite{Rckner-lin.-paper} (cf. Proposition \ref{Prop conj Rckner}) and present several consequences. In Section 4, we treat the stochastic case for equations of type (\ref{SNLFPKE}), the main result being Theorem \ref{main thm stoch case}.
 \paragraph{Acknowledgements}
 Financial support by the German Science Foundation DFG (IRTG 2235) is gratefully
acknowledged.
 \section{Notation and Preliminaries}
 We introduce notation and repeat basic facts on spaces and topologies of measures.
 \subsection*{Notation}
 For a measure space $(\mathcal{X},\mathcal{A},\mu)$ and a measurable function $\varphi: \mathcal{X}\to \mathbb{R}$, we set $\mu(\varphi) := \int_{\mathcal{X}}\varphi(x)d\mu(x)$ whenever the integral is well-defined. For $x \in \mathcal{X}$, we denote by $\delta_x$ the \textit{Dirac measure in $x$}, i.e. $\delta_x(A) = 1$ if and only if $x \in A$ and $\delta_x(A) =0$ else. For a topological space $X$ with Borel $\sigma$-algebra $\mathcal{B}(X)$ we denote the set of continuous bounded functions by $C_b(X)$, the set of Borel probability measures on $X$ by $\mathcal{P}(X)$ and write $\mathcal{P} = \mathcal{P}(\mathbb{R}^d)$. If $Y \in \mathcal{B}(X)$, we let $\mathcal{B}(X)_{\upharpoonright Y}$ denote the \textit{trace of $Y$on} $\mathcal{B}(X)$. For $T>0$, a family $(\mu_t)_{t \leq T} = (\mu_t)_{t \in [0,T]}$ of finite Borel measures on $\mathbb{R}^d$ is a \textit{Borel curve}, if $t \mapsto \mu_t(A)$ is Borel measurable for each $A \in \mathcal{B}(\mathbb{R}^d)$. A set of functions $\mathcal{G} \subseteq C_b(\mathbb{R}^d)$ is called \textit{measure-determining}, if $\mu(g) = \nu(g)$ for each $g \in \mathcal{G}$ implies $\mu = \nu$ for any two finite Borel measures $\mu, \nu$ on $\mathbb{R}^d$.\\
 For $x,y \in \mathbb{R}^d$, the usual inner product is denoted by $x \cdot y$ and, with slight abuse of notation, we also denote by $x \cdot y = \sum_{k \geq 1 }x_ky_k$ the inner product in $\ell^2$ (the Hilbert space of square-summable real-valued sequences $x=(x_k)_{k \geq1}$). For $\varphi \in C_b(\mathbb{R}^d)$, we set $||\varphi||_{\infty} := \underset{x \in \mathbb{R}^d}{\text{sup}}|\varphi(x)|$. If $\varphi$ has first- and second-order partial derivatives, we denote them by $\partial_i\varphi$ and $\partial^2_{ij}\varphi$ for $i,j \leq d$.\\
 \\
 We use notation for function spaces as follows. For $k \in \mathbb{N}_0$, $C^k_b(\mathbb{R}^d)$ denotes the subset of functions $\varphi$ in $C_b(\mathbb{R}^d)$ with continuous, bounded partial derivatives up to order $k$, with the usual norm $||\varphi||_{C^2_b} = \text{max}(||\varphi||_{\infty},||\partial_i\varphi||_{\infty}, ||\partial^2_{ij}\varphi||_{\infty})$ for $k = 2$. Likewise, $C^k_c(\mathbb{R}^d)$ denotes the subset of all such $\varphi$ with compact support; for $k = 0$, we write $C_c(\mathbb{R}^d)$ instead. For $n \geq 1$, $p \geq 1$ and a measure $\mu$ on $\mathcal{B}(\mathbb{R}^d)$, we denote by $L^p(\mathbb{R}^d,\mathbb{R}^n;\mu)$ the space of Borel functions $\varphi: \mathbb{R}^d \to \mathbb{R}^n$ such that
 $$\int_{\mathbb{R}^d}||\varphi(x)||^pd\mu(x) < +\infty,$$
 where $||\cdot||$ denotes the standard Euclidean norm on $\mathbb{R}^n$. For $p =2$, $\langle \cdot, \cdot \rangle_{L^2(\mathbb{R}^d,\mathbb{R}^n;\mu)}$ denotes the usual inner product on the Hilbert space $L^2(\mathbb{R}^d,\mathbb{R}^n;\mu)$. For  $T >0$ and a topological space $Y$, we write $C_TY$ for the set of continuous functions $\varphi:[0,T]\to Y$. By $\mathbb{S}^+_d$ we denote the space of symmetric, positive-semidefinite $d\times d$-matrices with real entries.
 \subsection*{Basic properties of spaces of measures}
 \subsubsection*{Probability measures}
 For a topological space $X$, we endow $\mathcal{P}(X)$ with the topology of weak convergence of measures, i.e. the initial topology of the maps $\mu \mapsto \mu(\varphi)$, $\varphi \in C_b(X)$. If $X$ is Polish, then so is $\mathcal{P}(X)$.
\subsubsection*{Subprobability measures}
 By $\mathcal{SP}$ we denote the set of all Borel subprobability measures on $\mathbb{R}^d$, i.e. $\mu \in \mathcal{SP}$ if and only if $\mu$ is a non-negative measure on $\mathcal{B}(\mathbb{R}^d)$ with $\mu(\mathbb{R}^d) \leq 1$. Throughout, we endow $\mathcal{SP}$ with the \textit{vague topology}, i.e. the initial topology of the maps $\mu \mapsto \mu(g)$, $g \in C_c(\mathbb{R}^d)$. Hence, a sequence $(\mu_n)_{n \geq 1}$ converges to $\mu$ in $\mathcal{SP}$ if and only if $\mu_n(g) \underset{n \to \infty}{\longrightarrow} \mu(g)$ for each $g \in C_c(\mathbb{R}^d)$. Its Borel $\sigma$-algebra is denoted by $\mathcal{B}(\mathcal{SP})$. In particular, $\mathcal{P}(\mathcal{SP})$, the set of Borel probability measures on $\mathcal{SP}$, is a topological space with the weak topology of probability measures on $(\mathcal{SP},\mathcal{B}(\mathcal{SP}))$. The Riesz-Markov representation theorem yields that $\mathcal{SP}$ with the vague topology coincides with the positive half of the closed unit ball of the dual space of $C_c(\mathbb{R}^d)$ with the weak*-topology. Hence $\mathcal{SP}$ with the vague topology is compact. It is also Polish and $\mu \mapsto \mu(\mathbb{R}^d)$ is vaguely lower semicontinuous, see \cite[Ch.4.1]{OK}. In particular,
 $\mathcal{P}  \in \mathcal{B}(\mathcal{SP}).$
Recall that $\mathcal{B}(\mathcal{P}) = \mathcal{B}(\mathcal{SP})_{\upharpoonright_{\mathcal{P}}}$. Hence, in the sequel we may consider measures $\Gamma \in \mathcal{P}(\mathcal{P})$ as elements in $\mathcal{P}(\mathcal{SP})$ with mass on $\mathcal{P}$.\\
 In contrast to weak convergence in $\mathcal{P}$, vague convergence in $\mathcal{SP}$ can be characterized by countably many functions in a sense made precise by Lemma \ref{Prop G early}. The fact that this is not true for weak convergence in $\mathcal{P}$ is the main reason why we formulate all equations for subprobability measures, although we are mainly interested in the case of probability solutions. More details in this direction are stated in Remark \ref{Rem explain why SP}.
 
 \section{Superposition Principle for deterministic nonlinear Fokker-Planck-Kolmogorov Equations}

 Fix $ T>0$ throughout, let each component of the coefficients
 $$a = (a_{ij})_{i,j \leq d}: [0,T]\times \mathcal{SP}\times \mathbb{R}^d \to \mathbb{S}^+_d, \, b = (b_i)_{i \leq d}: [0,T]\times \mathcal{SP}\times \mathbb{R}^d \to \mathbb{R}^d$$
 be $\mathcal{B}([0,T])\otimes \mathcal{B}(\mathcal{SP})\otimes \mathcal{B}(\mathbb{R}^d) / \mathcal{B}(\mathbb{R})$-measurable and consider the operator $\mathcal{L}_{t,\mu}$ as in (\ref{1}). 
 \begin{dfn}\label{Sol NLFPKE}
 \begin{enumerate}
 	\item [(i)] A vaguely continuous curve $(\mu_t)_{t \leq T} \subseteq \mathcal{SP}$ is a \textit{subprobability solution to }(\ref{NLFPKE}), if for each $i, j \leq d$ the global integrability condition
 		\begin{equation}\label{2}
 		\int_0^T \int_{\mathbb{R}^d}|a_{ij}(t,\mu_t,x)|+|b_i(t,\mu_t,x)|d\mu_t(x)dt < +\infty
 		\end{equation}holds and for each $\varphi \in C^2_c(\mathbb{R}^d)$ and $t \in [0,T]$
 		\begin{equation}\label{3}
 		\int_{\mathbb{R}^d}\varphi(x)d\mu_t(x)-\int_{\mathbb{R}^d}\varphi(x)d\mu_0(x) = \int_0^t\int_{\mathbb{R}^d}\mathcal{L}_{s,\mu_s}\varphi(x)d\mu_s(x)ds.
 		\end{equation}
 
 	\item[(ii)] A \textit{probability solution} to (\ref{NLFPKE}) is a curve $(\mu_t)_{t \leq T}\subseteq \mathcal{P}$ fulfilling (\ref{2}) and (\ref{3}) such that $t \mapsto \mu_t$ is weakly continuous.
 \end{enumerate}
 \end{dfn}
 Since vaguely continuous curves of measures are in particular Borel curves, all integrals in the above definition are defined. Below we shortly refer to \textit{subprobability }and \textit{probability solutions} and keep in mind the respective continuity conditions. In the literature, more general notions of solutions to (\ref{NLFPKE}) are considered, such as (possibly discontinuous) curves of signed, bounded measures \cite{bogachev2015fokker}. However, in this work, we restrict attention to continuous (sub-)probability solutions. In presence of the global integrability condition (\ref{2}), we make the following observation.
 \begin{rem}\label{Rem mass conserv}
 	\begin{enumerate}
 		\item [(i)] Any subprobability solution $(\mu_t)_{t \leq T}$ with $\mu_0 \in \mathcal{P}$ is a probability solution. Indeed, to prove this it suffices to show $\mu_t(\mathbb{R}^d) =1$ for each $t \leq T$. Since $(\mu_t)_{t \leq T}$ fulfills (\ref{3}), it suffices to choose a sequence $\varphi_l$, $l \geq 1$, from $C^2_c(\mathbb{R}^d)$ with the following properties: $0 \leq \varphi_l \nearrow 1$ pointwise such that $\partial_i \varphi_l \underset{l \to \infty}{\longrightarrow} 0$, $\partial^2_{ij}\varphi_l \underset{l \to \infty}{\longrightarrow}0$ pointwise with all first and second order derivatives bounded by some $M < +\infty$ uniformly in $l \geq 1$ and $x \in \mathbb{R}^d$. Considering (\ref{3}) for the limit $l \to \infty$, we obtain, by (\ref{2}) and dominated convergence, for each $t \in [0,T]$
 		$$\int_{\mathbb{R}^d}1 d\mu_t - \int_{\mathbb{R}^d}1 d\mu_0 = 0$$
 	and hence the claim.
 		\item[(ii)] By the above argument, one shows that for any subprobability solution, (\ref{3}) holds for each $\varphi \in C^2_b(\mathbb{R}^d)$. 
 	\end{enumerate} 
  \end{rem}
  \subsubsection*{Geometric approach to $\mathbf{\mathcal{SP}}$}For our goals, it is preferable to consider $\mathcal{SP}$ as a manifold-like space. We refer the reader to the appendix in \cite{Rckner-lin.-paper}, where for the space of probability measures $\mathcal{P}$ the tangent spaces $T_{\mu}\mathcal{P} = L^2(\mathbb{R}^d,\mathbb{R}^d;\mu)$ and a suitable test function class $\mathcal{F}C^2_b(\mathcal{P})$, 
  \begin{equation}\label{Test fct. Rckner}
  F \in \mathcal{F}C^2_b(\mathcal{P}) \iff F: \mu \mapsto f\big(\mu(\varphi_1),\dots,\mu(\varphi_n)\big) \text{ for }n \geq 1, f\in C^1_b(\mathbb{R}^n), \, \varphi_i \in C^{\infty}_c(\mathbb{R}^d),
  \end{equation}
  have been introduced. Further, based on these choices, a natural pointwise definition of the gradient $\nabla^{\mathcal{P}}F$ as a section in the tangent bundle $$T\mathcal{P} = \bigsqcup_{\mu \in \mathcal{P}}T_{\mu}\mathcal{P}$$for $F$ as above is given by
  $$\nabla^{\mathcal{P}}F(\mu) := \sum_{k=1}^{n}\partial_kf\big(\mu(\varphi_1),\dots,\mu(\varphi_n)\big)\nabla \varphi_k \in T_{\mu}\mathcal{P},$$which is shown to be independent of the representation of $F$ in terms of $f$ and $\varphi_i$. The setting in the present paper is nearly identical, but we consider the manifold-like space $\mathcal{SP}$ with the vague topology instead of $\mathcal{P}$ with the weak topology as in \cite{Rckner-lin.-paper}, because $\mathcal{SP}$ is embedded in $\mathbb{R}^{\infty}$ in the following sense. Let
  \begin{equation}\label{Fct.class G}
  \mathcal{G} = \{g_i, i \geq 1\}
  \end{equation} be dense in $C^2_c(\mathbb{R}^d)$ with respect to $||\cdot||_{C^2_b}$ such that no $g_i$ is constantly $0$. Clearly, any such set of functions is dense in $C_c(\mathbb{R}^d)$ with respect to uniform convergence and measure-determining. Such sets of functions are sufficiently extensive to characterize the topology of $\mathcal{SP}$ as well as solutions to (\ref{NLFPKE}):
 	
 	\begin{lem}\label{Prop G early}
 		Let $\mathcal{G}$ be any set of functions with the properties mentioned above and let $(\mu_n)_{n \geq 1} \subseteq \mathcal{SP}$. Then,
 		\begin{enumerate}
 			\item [(i)] $(\mu_n)_{n \geq 1}$ converges vaguely to $\mu \in \mathcal{SP}$ if and only if
 			\begin{equation*}
 			\mu_n(g_i) \underset{n \to \infty}{\longrightarrow}\mu(g_i)
 			\end{equation*}
 			for each $g_i \in \mathcal{G}$.
 			\item[(ii)] A vaguely continuous curve $(\mu_t)_{t \leq T} \subseteq \mathcal{SP}$, which fulfills (\ref{2}), is a subprobability solution to (\ref{NLFPKE}) if and only if (\ref{3}) holds for each $g_i \in \mathcal{G}$ in place of $\varphi$.
 		\end{enumerate}
 	\end{lem}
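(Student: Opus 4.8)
The plan is to exploit two density facts about $\mathcal{G}$ that are already recorded above: it is dense in $C_c(\mathbb{R}^d)$ with respect to uniform convergence, and it is dense in $C^2_c(\mathbb{R}^d)$ with respect to $||\cdot||_{C^2_b}$. In both parts one implication is immediate, since $\mathcal{G} \subseteq C^2_c(\mathbb{R}^d) \subseteq C_c(\mathbb{R}^d)$: vague convergence, being convergence in the initial topology generated by testing against all $g \in C_c(\mathbb{R}^d)$, in particular forces $\mu_n(g_i) \to \mu(g_i)$ for every $g_i \in \mathcal{G}$; and any subprobability solution satisfies (\ref{3}) for all $\varphi \in C^2_c(\mathbb{R}^d)$, hence for all $g_i \in \mathcal{G}$. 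It thus remains to establish the two converse implications, and the key leverage throughout is the uniform mass bound $\mu(\mathbb{R}^d) \leq 1$ valid for every $\mu \in \mathcal{SP}$.

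For (i), I would assume $\mu_n(g_i) \to \mu(g_i)$ for every $g_i \in \mathcal{G}$, fix an arbitrary $g \in C_c(\mathbb{R}^d)$, and, given $\varepsilon > 0$, use uniform density to pick $g_i \in \mathcal{G}$ with $||g - g_i||_{\infty} < \varepsilon$. The standard decomposition
$$|\mu_n(g) - \mu(g)| \leq |\mu_n(g - g_i)| + |\mu_n(g_i) - \mu(g_i)| + |\mu(g_i - g)|$$
then controls the outer two terms by $\mu_n(\mathbb{R}^d)\,||g - g_i||_{\infty} \leq \varepsilon$ and $\mu(\mathbb{R}^d)\,||g_i - g||_{\infty} \leq \varepsilon$ respectively, precisely because all measures involved are subprobabilities, while the middle term vanishes as $n \to \infty$ by hypothesis. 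Letting $n \to \infty$ and then $\varepsilon \to 0$ yields $\mu_n(g) \to \mu(g)$ for every $g \in C_c(\mathbb{R}^d)$, i.e. vague convergence.

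For (ii), I would assume (\ref{3}) holds for every $g_i \in \mathcal{G}$, fix $\varphi \in C^2_c(\mathbb{R}^d)$, and choose $g_k \in \mathcal{G}$ with $||g_k - \varphi||_{C^2_b} \to 0$. On the left-hand side of (\ref{3}), $||g_k - \varphi||_{\infty} \to 0$ together with the mass bound gives $\mu_t(g_k) \to \mu_t(\varphi)$ for every $t$, so the two boundary terms converge. For the right-hand side, $C^2_b$-convergence yields the pointwise estimate
$$|\mathcal{L}_{s,\mu_s}(g_k - \varphi)(x)| \leq ||g_k - \varphi||_{C^2_b}\Big(\sum_{i,j=1}^d |a_{ij}(s,\mu_s,x)| + \sum_{i=1}^d |b_i(s,\mu_s,x)|\Big),$$
so that the difference of the two space-time integrals is dominated by $||g_k - \varphi||_{C^2_b}$ times a finite quantity, finiteness being exactly what the global integrability condition (\ref{2}) guarantees after summing over the finitely many indices $i,j \leq d$. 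Passing to the limit $k \to \infty$ transfers (\ref{3}) from each $g_k$ to $\varphi$, showing that $(\mu_t)_{t \leq T}$ is a subprobability solution.

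The argument reduces to a pair of approximation estimates, so I do not anticipate a genuine obstacle; the two points meriting care are that the uniform subprobability mass bound is what renders the outer terms in the $\varepsilon/3$ estimate of (i) and the boundary terms of (ii) harmless, and that the global integrability assumption (\ref{2}) is exactly the hypothesis needed to dominate the generator term and justify the limit in (ii). Without (\ref{2}) the convergence of the right-hand side of (\ref{3}) could fail, so that condition is essential to the equivalence rather than merely technical.
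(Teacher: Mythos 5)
Your proposal is correct and follows essentially the same route as the paper: part (i) via the three-term ($\varepsilon/3$) decomposition using uniform density of $\mathcal{G}$ in $C_c(\mathbb{R}^d)$ and the subprobability mass bound, and part (ii) via $C^2_b$-approximation of $\varphi$ by elements of $\mathcal{G}$ with the limit justified by the global integrability condition (\ref{2}). The only cosmetic difference is that where the paper invokes dominated convergence, you give the explicit uniform estimate $\|g_k-\varphi\|_{C^2_b}$ times the finite integral from (\ref{2}), which is the same idea made quantitative.
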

 	\begin{proof}
 		\begin{enumerate}
 			\item [(i)] From $\mu_n(g_i) \underset{n \to \infty}{\longrightarrow}\mu(g_i)$ for each $g_i \in \mathcal{G}$, one obtains for each $f \in C_c(\mathbb{R}^d)$ and $\epsilon>0$ by choosing $g_i \in \mathcal{G}$ with $||f-g_i||_{\infty} < \frac{\epsilon}{3}$
 			\begin{equation}\label{4}
 			|\mu_n(f)-\mu(f)| \leq |\mu_n(f)-\mu_n(g_i)|+|\mu_n(g_i)-\mu(g_i)|+|\mu(g_i)-\mu(f)| \leq \epsilon
 			\end{equation}for all sufficiently large $n \geq 1$.
 			\item[(ii)] Let $\varphi \in C^2_c(\mathbb{R}^d)$ be approximated uniformly up to second-order derivatives by a sequence $\{g_{i_k}\}_{k \geq 1}$ from $\mathcal{G}$. Considering (\ref{3}) for such $g_{i_k}$ and letting $k \to \infty$, the result follows by dominated convergence, which applies due to (\ref{2}).
 		\end{enumerate}
 	\end{proof}
 Considering $\mathcal{SP}$ as a (infinite-dimensional) manifold-like topological space, any set of functions $\mathcal{G}$ as above provides a global chart (i.e., an atlas consisting of a single chart) for $\mathcal{SP}$, as it yields an embedding $\mathcal{SP} \subseteq \mathbb{R}^{\infty}$ (cf. Lemma \ref{Lem aux G, J}).\\
  Consider $\mathbb{R}^{\infty}$ as a Polish space with the topology of pointwise convergence and the range $G(\mathcal{SP}) \subseteq \mathbb{R}^{\infty}$ of $G$ as introduced below with its subspace topology. We write $C_TG(\mathcal{SP})$ for the set of all elements in $C_T\mathbb{R}^{\infty}$ with values in $G(\mathcal{SP})$. For $u \in [0,T]$, we denote by $e_u$ the canonical projection on $C_T\mathcal{SP}$
 $$e_u: (\mu_t)_{t\leq T} \mapsto \mu_u$$
 and, likewise, by $e^{\infty}_u$ the projection on $C_T\mathbb{R}^{\infty}$. Subsequently, without further mentioning, we consider the spaces $C_T\mathcal{SP}$ and $C_T\mathbb{R}^{\infty}$ with $\sigma$-algebras
 $$\mathcal{B}(C_T\mathcal{SP}) = \sigma(e_t, t\in [0,T]) \text{ and }\mathcal{B}(C_T\mathbb{R}^{\infty}) = \sigma(e_t^{\infty}, t \in [0,T]),$$
 respectively. These algebras coincide with the Borel $\sigma$-algebras with respect to the topology of uniform convergence (because both $\mathcal{SP}$ and $\mathbb{R}^{\infty}$ are Polish). Also, consider $C_TG(\mathcal{SP})$ with the natural subspace $\sigma$-algebra of $\mathcal{B}(C_T\mathbb{R}^{\infty})$. We refer to these $\sigma$-algebras as the \textit{canonical $\sigma$-algebras} on the respective spaces and denote the set of probability measures on the respective $\sigma$-algebras by $\mathcal{P}(C_T\mathcal{SP})$ and $\mathcal{P}(C_T\mathbb{R}^{\infty})$.
 \begin{lem}\label{Lem aux G, J}
 	Let $\mathcal{G} = \{g_i\}_{i \geq 1} $ be a set of functions as in (\ref{Fct.class G}).
 	\begin{enumerate}
 		\item [(i)] The map $G$, depending on $\mathcal{G}$, 
 		\begin{equation}\label{Def map G}
 		G : \mathcal{SP} \to \mathbb{R}^{\infty}, G(\mu) := (\mu(g_i))_{i \geq 1}
 		\end{equation}
 		is a homeomorphism between $\mathcal{SP}$ and its range $G(\mathcal{SP})$ (hence, formally, a \textup{global chart} for $\mathcal{SP}$). In particular, $G(\mathcal{SP}) \subseteq \mathbb{R}^{\infty}$ is compact. Moreover, if $\mathcal{G}' = \{g_i', i \geq 1\}$ is another set as in (\ref{Fct.class G}) with corresponding chart $G'$, then $G' = G \circ \mathcal{V}$ for a unique homeomorphism $\mathcal{V}$ on $\mathcal{SP}$.
 		\item[(ii)] The map
 		$$J : C_T\mathcal{SP} \to C_T\mathbb{R}^{\infty},\, J((\mu_t)_{t \leq T}) := G(\mu_t)_{t \leq T}$$
 		is measurable and one-to-one with measurable inverse $J^{-1}: C_TG(\mathcal{SP}) \to C_T\mathcal{SP}$. Further, $C_TG(\mathcal{SP}) \subseteq C_T\mathbb{R}^{\infty}$ is a measurable set, i.e. $C_TG(\mathcal{SP}) \subseteq \mathcal{B}(C_T\mathbb{R}^{\infty})$.
 	\end{enumerate}
 \end{lem}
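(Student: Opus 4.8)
The plan for (i) is a standard compactness argument. First I would verify that $G$ is continuous: since $\mathbb{R}^{\infty}$ carries the product topology, a map into $\mathbb{R}^{\infty}$ is continuous precisely when each of its coordinate maps is, and here the $i$-th coordinate $\mu\mapsto\mu(g_i)$ is continuous by the very definition of the vague topology, as $g_i\in C_c(\mathbb{R}^d)$. Injectivity of $G$ is then immediate from the fact that $\mathcal{G}$ is measure-determining: $G(\mu)=G(\nu)$ forces $\mu(g_i)=\nu(g_i)$ for all $i$, hence $\mu=\nu$. At this point I would invoke that $\mathcal{SP}$ is vaguely compact while $\mathbb{R}^{\infty}$ is Hausdorff; a continuous injection from a compact space into a Hausdorff space is automatically a homeomorphism onto its image, and that image is compact. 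This yields every claim in the first sentence of (i), including compactness of $G(\mathcal{SP})$. For the change-of-chart statement, both $G$ and $G'$ are homeomorphisms onto their images, so the transition map $G'\circ G^{-1}$ is a homeomorphism between the two ranges; read back through the charts this produces the asserted homeomorphism $\mathcal{V}$ with $G'=G\circ\mathcal{V}$, and uniqueness is forced by injectivity of $G$.

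For (ii), the guiding principle is that both $\sigma$-algebras involved are generated by the coordinate projections, so every measurability assertion reduces to a coordinatewise check. To see that $J$ is measurable, note that since $\mathcal{B}(C_T\mathbb{R}^{\infty})=\sigma(e_t^{\infty},\,t\le T)$ it suffices that $e_u^{\infty}\circ J$ be measurable for each $u$; but $e_u^{\infty}\circ J=G\circ e_u$, a composition of the measurable projection $e_u$ with the continuous, hence Borel, map $G$. Injectivity of $J$ follows at once from injectivity of $G$ applied at each fixed time. For the inverse, $J^{-1}$ sends $(x_t)_{t\le T}$ to $(G^{-1}(x_t))_{t\le T}$, and the analogous identity $e_t\circ J^{-1}=G^{-1}\circ e_t^{\infty}$ on $C_TG(\mathcal{SP})$ reduces its measurability to Borel measurability of $G^{-1}\colon G(\mathcal{SP})\to\mathcal{SP}$, which holds because $G$ is a homeomorphism onto its image by (i).

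The one genuinely nontrivial point, and the step I expect to be the main obstacle, is the measurability of $C_TG(\mathcal{SP})$ in $C_T\mathbb{R}^{\infty}$. Writing
\[
C_TG(\mathcal{SP})=\bigcap_{t\in[0,T]}(e_t^{\infty})^{-1}\big(G(\mathcal{SP})\big),
\]
each individual preimage is measurable, since $e_t^{\infty}$ is measurable and $G(\mathcal{SP})$ is Borel — indeed closed, being compact in the Hausdorff space $\mathbb{R}^{\infty}$ by (i) — but the intersection ranges over the uncountable index set $[0,T]$. The key idea I would use is to exploit continuity of the paths together with closedness of $G(\mathcal{SP})$: for a continuous $(x_t)_{t\le T}\in C_T\mathbb{R}^{\infty}$ one has $x_t\in G(\mathcal{SP})$ for every $t$ if and only if the same holds for every $t$ in a fixed countable dense subset $D\subseteq[0,T]$, because $x_{t_n}\to x_t$ whenever $t_n\to t$ and $G(\mathcal{SP})$ is closed. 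Hence
\[
C_TG(\mathcal{SP})=\bigcap_{t\in D}(e_t^{\infty})^{-1}\big(G(\mathcal{SP})\big),
\]
now a countable intersection of measurable sets, which is therefore measurable. This simultaneously legitimizes the domain $C_TG(\mathcal{SP})$ appearing in the description of $J^{-1}$ and completes the argument.
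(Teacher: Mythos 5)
Your proof is correct, but it departs from the paper's argument at several points, generally in the direction of more elementary tools. For the homeomorphism in (i), the paper proves continuity of $G^{-1}$ directly via the sequential characterization of vague convergence (Lemma \ref{Prop G early}(i)), whereas you invoke the standard fact that a continuous injection from a compact space into a Hausdorff space is a homeomorphism onto its image; your route is self-contained and gives closedness of $G(\mathcal{SP})$ for free, which you then exploit in (ii). For measurability of $J^{-1}$, the paper appeals to descriptive set theory: $J$ is a measurable bijection between standard Borel spaces ($\mathcal{SP}$ and $G(\mathcal{SP})$ are Polish), so its inverse is automatically measurable. You instead reduce everything to coordinates via $e_t\circ J^{-1}=G^{-1}\circ e_t^{\infty}$ and the continuity of $G^{-1}$, which is more elementary and avoids the Lusin--Souslin/Kuratowski theorem entirely. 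For measurability of $C_TG(\mathcal{SP})$, the paper only remarks that it follows from closedness of $G(\mathcal{SP})$; your countable-dense-times argument is precisely the missing detail, and it is in fact the same device the paper uses explicitly later in the stochastic case (for $C_TH(\mathcal{SP})$). One caveat, which applies equally to the paper: the change-of-chart assertion $G'=G\circ\mathcal{V}$ with $\mathcal{V}$ a homeomorphism \emph{of} $\mathcal{SP}$ cannot hold literally, since it would force $G'(\mathcal{SP})=G(\mathcal{SP})$, which fails in general; what both you and the paper actually establish (and what is actually used) is that the transition map $G'\circ G^{-1}$ is a homeomorphism between the two ranges, so your "read back through the charts" gloss is no worse than the paper's own construction, whose $\mathcal{V}=G^{-1}\circ\mathcal{W}\circ G'$ collapses to the identity.
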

 \begin{proof}
 	\begin{enumerate}
 		\item [(i)] The continuity of $G$ is obvious by definition of the vague topology on $\mathcal{SP}$ and since $\mathcal{G} \subseteq C_c(\mathbb{R}^d)$. Since $\mathcal{SP}$ is compact with respect to the vague topology, compactness of $G(\mathcal{SP}) \subseteq \mathbb{R}^{\infty}$ follows. $\mathcal{G}$ is measure-determining on $\mathbb{R}^d$, which implies that $G$ is one-to-one. Since by definition 
 		$$G(\mu_n) \underset{n \to \infty}{\longrightarrow} G(\mu) \iff \mu_n(g_i) \underset{n \to \infty}{\longrightarrow} \mu(g_i) \text{ for each }g_i \in \mathcal{G},$$ 
 		continuity of $G^{-1}$ follows from Lemma \ref{Prop G early} (i). The final assertion follows, since for $G'$ as in the assertion, $\mathcal{V}:= G^{-1}\circ \mathcal{W}\circ G'$ with $\mathcal{W}:G'(\mathcal{SP}) \to G(\mathcal{SP})$, $\mathcal{W}: (\mu(g'_i))_{i \geq 1} \mapsto (\mu(g_i))_{i \geq 1}$ is a homeomorphism.
 		\item[(ii)] Since $G$ is one-to-one and measurable, so is $J$. Clearly, $C_TG(\mathcal{SP})$ is the range of $J$ and hence $J: C_T\mathcal{SP} \to C_TG(\mathcal{SP})$ is a bijection between standard Borel spaces (the latter, because $\mathcal{SP}$ and $G(\mathcal{SP})$ with the respective topologies are Polish). This yields the measurability of $J^{-1}$. Finally, closedness of $G(\mathcal{SP}) \subseteq \mathbb{R}^{\infty}$ implies that $C_TG(\mathcal{SP})\subseteq C_T\mathbb{R}^{\infty}$ is a measurable set, because $G(\mathcal{SP})$ carries the subspace topology inherited from $\mathbb{R}^{\infty}$. 
 	\end{enumerate}
 \end{proof}
 By part (i) of the previous lemma it is justified to fix a set $\mathcal{G} = \{g_i, i \geq 1\}$ for the remainder of the section. In order to switch between test functions on $\mathcal{SP}$ and $\mathbb{R}^{\infty}$ in an equivalent way, we slightly deviate from the test function class presented in \cite{Rckner-lin.-paper} (see (\ref{Test fct. Rckner})) and, instead, consider
 $$\mathcal{F}C^2_b(\mathcal{G}) := \{F : \mathcal{SP}\to \mathbb{R} | F(\mu) =  f\big(\mu(g_1),\dots,\mu(g_n)\big), f \in C^2_b(\mathbb{R}^n), n \geq 1 \},$$where the restriction $f \in C^2_b(\mathbb{R}^n)$ is made for consistency with the stochastic case later on only. We summarize our geometric interpretation of $\mathcal{SP}$, which is of course still a close adaption of the ideas presented in \cite{Rckner-lin.-paper}:\\
 For the manifold-like space $\mathcal{SP}$, we consider smooth test functions $F \in \mathcal{F}C^2_b(\mathcal{G})$, with $\mathcal{G}$ being fixed as in (\ref{Fct.class G}). For each $\mu \in \mathcal{SP}$, we have the tangent space $T_{\mu}\mathcal{SP} = L^2(\mathbb{R}^d,\mathbb{R}^d;\mu)$ and the gradient
 $$\nabla ^{\mathcal{SP}}F(\mu) = \sum_{k=1}^{n}\partial_kf\big(\mu(g_1),\dots,\mu(g_n)\big)\nabla g_k \in T_{\mu}\mathcal{SP}$$ for $\mathcal{F}C^2_b(\mathcal{G}) \ni F: \mu \mapsto f\big(\mu(g_1),\dots,\mu(g_n)\big)$ as a section in the tangent bundle $T\mathcal{SP}$, which is independent of the representation of $F$. Adding to the approach of $\mathcal{SP}$ as a manifold-like space, the global chart $G$ as in (\ref{Def map G}) embeds $\mathcal{SP}$ into $\mathbb{R}^{\infty}$. However, we do not rigorously treat $\mathcal{SP}$ as a (Fréchet-)manifold and consider the embedding $\mathcal{SP} \subseteq \mathbb{R}^{\infty}$ merely as a tool to transfer (\ref{NLFPKE}) and its corresponding continuity equation to equivalent equations over $\mathbb{R}^{\infty}$, as outlined below. 
\subsubsection*{The continuity equation (\ref{P-CE})}
 As mentioned in the introduction, we study the linear continuity equation associated to (\ref{NLFPKE}) as derived in \cite{Rckner-lin.-paper}, which is a first-order equation for curves of measures on $\mathcal{SP}$. More precisely, in analogy to the derivation in \cite{Rckner-lin.-paper}, it is readily seen that any subprobability solution $(\mu_t)_{t \leq T}$ to (\ref{NLFPKE}) induces a curve of elements in $\mathcal{P}(\mathcal{SP})$, $\Gamma_t := \delta_{\mu_t}$, $t \leq T$, with
 \begin{equation}\label{Solved eq1}
 \int_{\mathcal{SP}}F(\mu)d\Gamma_t(\mu)- \int_{\mathcal{SP}}F(\mu)d\Gamma_0(\mu) = \int_0^t \int_{\mathcal{SP}}\big \langle \nabla^{\mathcal{SP}}F(\mu), b(s,\mu)+a(s,\mu)\nabla\big \rangle_{L^2(\mu)}d\Gamma_s(\mu)ds
 \end{equation}for each $t \leq T$ and $F \in \mathcal{F}C^2_b(\mathcal{G})$. Here, we set $b(s,\mu) = b(s,\mu,\cdot): \mathbb{R}^d \to \mathbb{R}^d$ (similarly for $a(s,\mu)$), $L^2(\mu) = L^2(\mathbb{R}^d,\mathbb{R}^d;\mu)$ and abbreviated
 $$\big \langle \nabla^{\mathcal{SP}}F(\mu), b(s,\mu)+a(s,\mu)\nabla\big \rangle_{L^2(\mu)} = \int_{\mathbb{R}^d}\sum_{k=1}^{n}(\partial_kf)\big(\mu(g_1),\dots,\mu(g_n)\big) a_{ij}(s,\mu,x)\partial^2_{ij}g_k(x)+b_i(s,\mu,x)\partial_i g_k(x)d\mu(x).$$
 We rewrite (\ref{Solved eq1}) in distributional form in duality with $\mathcal{F}C^2_b(\mathcal{G})$ as
 \begin{equation*}
 \partial_t \Gamma_t = -\nabla^{\mathcal{SP}}\cdot([b_t+a_t\nabla]\Gamma_t), \,\, t \leq T.
 \end{equation*}Setting
 \begin{equation}\label{5}
 \mathbf{L}_tF(\mu) := \big\langle a(t,\mu)\nabla+b(t,\mu), \nabla^{\mathcal{SP}}F(\mu)\big\rangle_{L^2(\mu)},
 \end{equation}this is just the linear continuity equation (\ref{P-CE}). The term $a\nabla$ has rigorous meaning only, if $a$ has sufficiently regular components in order to put the derivative $\nabla$ on $a$ via integration by parts, which we do not assume at any point.
 Considering $\mathcal{SP}$ as a manifold-like space, one may formally regard to $a\nabla + b$ as a time-dependent section in the tangent bundle $T\mathcal{SP}$.\\
 More generally, we introduce the following notion of solution to (\ref{P-CE}) (see \cite{Rckner-lin.-paper}):
 \begin{dfn}\label{Def sol P-CE}
 	A weakly continuous curve $(\Gamma_t)_{t \leq T} \subseteq \mathcal{P}(\mathcal{SP})$ is a \textit{solution to }(\ref{P-CE}), if the integrability condition
 		\begin{equation}\label{aux_revised1}
 		\int_0^T\int_{\mathcal{SP}}||b(t,\mu,\cdot)||_{L^1(\mathbb{R}^d,\mathbb{R}^d;\mu)}+||a(t,\mu,\cdot)||_{L^1(\mathbb{R}^d,\mathbb{R}^{d^2};\mu)}d \Gamma_t(\mu)dt < +\infty
 		\end{equation} is fulfilled and for each $F \in \mathcal{F}C^2_b(\mathcal{G})$ and $t \in [0,T]$
 		\begin{equation}\label{6}
 		\int_{\mathcal{SP}}F(\mu)d\Gamma_t(\mu)- \int_{\mathcal{SP}}F(\mu)d\Gamma_0(\mu) = \int_0^t \int_{\mathcal{SP}}\mathbf{L}_sF(\mu)d\Gamma_s(\mu)ds
 		\end{equation}holds (which is just (\ref{Solved eq1})).

 \end{dfn}
 The choice of $\mathcal{G}$ as in (\ref{Fct.class G}) implies that any solution in the above sense fulfills (\ref{6}) even for each $F \in \mathcal{F}C^2_b(\mathcal{SP})$, i.e. for the larger class of test functions considered in \cite{Rckner-lin.-paper} (upon extending their domain from $\mathcal{P}$ to $\mathcal{SP}$). In particular, this notion of solution is independent of $\mathcal{G}$. The main result of this chapter, Theorem \ref{main thm det case}, states that any solution to (\ref{P-CE}) as in Definition \ref{Def sol P-CE} arises as a superposition of solutions to (\ref{NLFPKE}). Note that for $\nu \in \mathcal{SP}$, uniqueness of solutions $(\Gamma_t)_{t \leq T}$ to (\ref{P-CE}) with $\Gamma_0 = \delta_{\nu}$ implies uniqueness of subprobability solutions $(\mu_t)_{t \leq T}$ to (\ref{NLFPKE}) with $\mu_0 = \nu$.
 
 \subsubsection*{Transferring (\ref{NLFPKE}) and (\ref{P-CE}) to $\mathbb{R}^{\infty}$}
 We use the global chart $G: \mathcal{SP} \to \mathbb{R}^{\infty}$ and the map $J$ of Lemma \ref{Lem aux G, J} to reformulate both (\ref{NLFPKE}) and (\ref{P-CE}) on $\mathbb{R}^{\infty}$. Define a Borel vector field $\bar{B} = (\bar{B}_k)_{k \in \mathbb{N}}$ component-wise as follows. For $t \in [0,T]$, consider the Borel set $A_t \in \mathcal{B}(\mathcal{SP})$,
 $$A_t := \bigg\{\mu \in \mathcal{SP}: \int_{\mathbb{R}^d}|a_{ij}(t,\mu,x)+|b_i(t,\mu,x)|d\mu(x)< \infty \,\, \forall i,j \leq d\bigg\}$$
 and define $B := (B_k)_{k \in \mathbb{N}}$ via
 $$B_k(t,\mu):= \int_{\mathbb{R}^d}\mathcal{L}_{t,\mu}g_k(x)d\mu(x),\quad (t,\mu) \in [0,T]\times A_t.$$
 Now define $\bar{B}: [0,T]\times \mathbb{R}^\infty \to \mathbb{R}^\infty$ via
 $$\bar{B}(t,z):= \begin{cases}
 	B(t,G^{-1}(z)),&\quad \text{ if }z \in G(A_t)\\
 	0,&\quad \text{ else,}
 \end{cases}$$
which is Borel measurable by Lemma \ref{Lem aux G, J}.  Next, consider the differential equation on $\mathbb{R}^{\infty}$
 \begin{equation}\label{Rinfty-ODE}\tag{$\mathbb{R}^{\infty}$-ODE}
 \frac{d}{dt} z_t = \bar{B}(t,z_t), \,\,t \in [0,T],
 \end{equation}which turns out to be the suitable analogue to (\ref{NLFPKE}) on $\mathbb{R}^{\infty}$. Analogously, the corresponding continuity equation for curves of Borel probability measures $\bar{\Gamma}_t$ on $\mathbb{R}^{\infty}$, i.e.
 \begin{equation}\label{Rinfty-CE}\tag{$\mathbb{R}^{\infty}$-CE}
 \partial_t \bar{\Gamma}_t = -\bar{\nabla}\cdot(\bar{B}\bar{\Gamma}_t), \,\, t\in [0,T],
 \end{equation}with $\bar{\nabla}$ as introduced below, is the natural analogue of the linear continuity equation (\ref{P-CE}). Roughly, these analogies are to be understood in the sense that solutions to (\ref{NLFPKE}) and (\ref{P-CE}) can be transferred to solutions to (\ref{Rinfty-ODE}) and (\ref{Rinfty-CE}), respectively, via the chart $G$. We refer to the proof of the main result below for more details. Let $$p_i: z \mapsto z_i,\,\, z \in \mathbb{R}^{\infty}$$ denote the canonical projection to the $i$-th component, set $\pi_n = (p_1,\dots,p_n)$ and 
 $$\mathcal{F}C^2_b(\mathbb{R}^{\infty}) := \{\bar{F}: \mathbb{R}^{\infty} \to \mathbb{R} | \bar{F}= f \circ \pi_n, f \in C^2_b(\mathbb{R}^n), n \geq 1\}.$$
 By $\bar{\nabla}$ we denote the gradient-type operator on $\mathbb{R}^{\infty}$, acting on $\bar{F} = f \circ \pi_n \in \mathcal{F}C^2_b(\mathbb{R}^{\infty})$ via
 \begin{equation}\label{Def nabla gradient}
 \bar{\nabla}\bar{F}(z):= \big((\partial_1f)(\pi_nz),\dots,(\partial_nf) (\pi_nz),0,0,\dots\big).
 \end{equation}Again, the restriction to test functions possessing second-order derivatives is made in order to be consistent with the stochastic (second-order) case later on.
 \begin{dfn}\label{Def sol Rinfty-eq}
 	\begin{enumerate}
 		\item [(i)] A curve $(z_t)_{t \leq T} = ((p_i\circ z_t)_{i \geq 1})_{t \leq T} \in C_T\mathbb{R}^{\infty}$ is a \textit{solution to} (\ref{Rinfty-ODE}), if for each $i \geq 1$ the $\mathbb{R}$-valued curve $t \mapsto p_i \circ z_t$ is absolutely continuous with weak derivative $t \mapsto p_i \circ \bar{B}(t,z_t)$ $dt$-a.s.
 		\item[(ii)] A curve $(\bar{\Gamma}_t)_{t \leq T} \subseteq \mathcal{P}(\mathbb{R}^{\infty})$ is a \textit{solution to }(\ref{Rinfty-CE}), if it is weakly continuous, fulfills the integrability condition
 		\begin{equation}\label{8}
 		\int_0^T \int_{\mathbb{R}^{\infty}} |\bar{B}_k(t,z)|d\bar{\Gamma}_t(z)dt < +\infty \text{ for each }k \geq 1
 		\end{equation} and for each $\bar{F}\in \mathcal{F}C^2_b(\mathbb{R}^{\infty})$ the identity
 		$$\int_{\mathbb{R}^{\infty}}\bar{F}(z)d\bar{\Gamma}_t(z)- \int_{\mathbb{R}^{\infty}}\bar{F}(z)d\bar{\Gamma}_0(z) = \int_0^t \int_{\mathbb{R}^{\infty}}\bar{\nabla} \bar{F}(z) \cdot \bar{B}(s,z)d\bar{\Gamma}_s(z)ds$$
 		holds for all $t \in [0,T]$.
 	\end{enumerate}
 \end{dfn}
 \subsection{Main Result: Deterministic case}
The following theorem is the main result for the deterministic case.
 \begin{theorem}\label{main thm det case}
 	Let $a,b$ be Borel coefficients on $[0,T]\times \mathcal{SP}\times \mathbb{R}^d$. For any weakly continuous solution $(\Gamma_t)_{t \leq T}$ to (\ref{P-CE}) in the sense of Definition \ref{Def sol P-CE}, there exists a probability measure $\eta \in \mathcal{P}(C_T\mathcal{SP})$, which is concentrated on vaguely continuous subprobability solutions to (\ref{NLFPKE}) such that
 	$$\eta \circ e_t^{-1} = \Gamma_t,\,\, t \in [0,T].$$\\
 	Moreover, if $\Gamma_0 \in \mathcal{P}(\mathcal{P})$, then $\eta$ is concentrated on weakly continuous probability solutions to (\ref{NLFPKE}).
 \end{theorem}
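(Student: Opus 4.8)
The plan is to transfer the whole problem to $\mathbb{R}^{\infty}$ via the global chart $G$ and the induced path map $J$ of Lemma \ref{Lem aux G, J}, to invoke the superposition principle of Ambrosio and Trevisan (\cite{ambrosio2014}, \cite{TrevisanPhD}) relating (\ref{Rinfty-CE}) and (\ref{Rinfty-ODE}) there, and to pull the resulting measure back to $C_T\mathcal{SP}$. Concretely, given a solution $(\Gamma_t)_{t\leq T}$ to (\ref{P-CE}), I would first define the pushforward $\bar{\Gamma}_t := \Gamma_t \circ G^{-1} \in \mathcal{P}(\mathbb{R}^{\infty})$ and show that $(\bar{\Gamma}_t)_{t\leq T}$ solves (\ref{Rinfty-CE}) in the sense of Definition \ref{Def sol Rinfty-eq}(ii). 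Weak continuity of $(\bar{\Gamma}_t)$ is immediate from continuity of $G$ and weak continuity of $(\Gamma_t)$. For the integrability condition (\ref{8}), I would use that for each fixed $g_k\in\mathcal{G}$ the function $\mathcal{L}_{t,\mu}g_k$ is pointwise bounded by $C_k\big(\sum_{i,j}|a_{ij}(t,\mu,\cdot)| + \sum_i |b_i(t,\mu,\cdot)|\big)$ with $C_k$ depending only on $\|g_k\|_{C^2_b}$ and on the compact support of $g_k$; hence $|\bar{B}_k(t,z)| = |B_k(t,\mu)| \leq C_k\big(\|a(t,\mu,\cdot)\|_{L^1(\mu)} + \|b(t,\mu,\cdot)\|_{L^1(\mu)}\big)$ for $z = G(\mu)$, and (\ref{8}) follows by a change of variables from the integrability assumption (\ref{aux_revised1}) on $(\Gamma_t)$. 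The weak formulation transfers because every $\bar{F} = f\circ \pi_n \in \mathcal{F}C^2_b(\mathbb{R}^{\infty})$ corresponds, under $G$, to the function $F\in\mathcal{F}C^2_b(\mathcal{G})$, $F(\mu)=f(\mu(g_1),\dots,\mu(g_n))$, and a direct computation identifies $\bar{\nabla}\bar{F}(z)\cdot \bar{B}(s,z)$ at $z = G(\mu)$ with $\mathbf{L}_sF(\mu)$; the defining identity (\ref{6}) for $(\Gamma_t)$ then becomes exactly the identity required in Definition \ref{Def sol Rinfty-eq}(ii).

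Having verified the hypotheses, I would apply the superposition principle on $\mathbb{R}^{\infty}$ to obtain $\bar{\eta}\in\mathcal{P}(C_T\mathbb{R}^{\infty})$, concentrated on solutions to (\ref{Rinfty-ODE}), with $\bar{\eta}\circ (e^{\infty}_t)^{-1} = \bar{\Gamma}_t$ for all $t$. The decisive step is then to show that $\bar{\eta}$ is concentrated on $C_TG(\mathcal{SP})$, so that it can be pulled back. Since $\bar{\Gamma}_t = \Gamma_t\circ G^{-1}$ is carried by $G(\mathcal{SP})$, the marginal identity gives $\bar{\eta}(\{z: z_t \in G(\mathcal{SP})\}) = 1$ for every fixed $t$; intersecting over a countable dense set of times and using that $G(\mathcal{SP})$ is closed in $\mathbb{R}^{\infty}$ (indeed compact, by Lemma \ref{Lem aux G, J}(i)) together with path-continuity, I would conclude $\bar{\eta}(C_TG(\mathcal{SP})) = 1$. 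This closedness, which rests on vague compactness of $\mathcal{SP}$, is what makes the pullback well-defined and is the technical heart of the argument. I would then let $\eta := \bar{\eta}\circ J$ be the image of $\bar{\eta}$ under the measurable inverse $J^{-1}$ of Lemma \ref{Lem aux G, J}(ii).

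It remains to check that $\eta$ has the claimed properties. For $\bar{\eta}$-a.e.\ path $(z_t)_{t\leq T}\in C_TG(\mathcal{SP})$, the curve $\mu_t := G^{-1}(z_t)$ is vaguely continuous, and the defining property of a solution to (\ref{Rinfty-ODE}) means precisely that $t\mapsto \mu_t(g_i) = p_i(z_t)$ is absolutely continuous with $\frac{d}{dt}\mu_t(g_i) = \bar{B}_i(t,z_t) = \int_{\mathbb{R}^d}\mathcal{L}_{t,\mu_t}g_i\,d\mu_t$ for a.e.\ $t$; integrating yields (\ref{3}) with $g_i$ in place of $\varphi$ for every $g_i\in\mathcal{G}$, whence $(\mu_t)_{t\leq T}$ is a subprobability solution to (\ref{NLFPKE}) by Lemma \ref{Prop G early}(ii). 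Thus $\eta$ is concentrated on subprobability solutions. The marginal identity $\eta\circ e_t^{-1} = \Gamma_t$ follows from $e_t\circ J^{-1} = G^{-1}\circ e^{\infty}_t$ and $\bar{\eta}\circ(e^{\infty}_t)^{-1} = \bar{\Gamma}_t$.

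Finally, for the additional statement, suppose $\Gamma_0\in\mathcal{P}(\mathcal{P})$. Then $\eta\circ e_0^{-1} = \Gamma_0$ gives $\eta(\{(\mu_t)_{t\leq T}: \mu_0\in\mathcal{P}\}) = \Gamma_0(\mathcal{P}) = 1$, so $\eta$-a.e.\ path is a subprobability solution started in $\mathcal{P}$. By Remark \ref{Rem mass conserv}(i) each such solution conserves mass, i.e.\ $\mu_t(\mathbb{R}^d)=1$ for all $t$, and vague continuity together with constancy of the total mass upgrades to weak continuity of $t\mapsto\mu_t$ (for subprobability measures, vague convergence with convergence of total masses is equivalent to weak convergence). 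Hence $\eta$ is concentrated on weakly continuous probability solutions to (\ref{NLFPKE}). The main obstacle throughout is the concentration step $\bar{\eta}(C_TG(\mathcal{SP}))=1$ and the measurable-pullback machinery of Lemma \ref{Lem aux G, J}; once these are in place, the transfers of the integrability condition and of the weak formulation between $\mathcal{SP}$ and $\mathbb{R}^{\infty}$ are routine.
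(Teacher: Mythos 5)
Your proposal is correct and follows essentially the same three-step route as the paper's proof: pushforward of $(\Gamma_t)_{t\leq T}$ under the chart $G$ to obtain a solution of (\ref{Rinfty-CE}), application of the Ambrosio--Trevisan superposition principle on $\mathbb{R}^{\infty}$, and pullback via $J^{-1}$ after establishing $\bar{\eta}(C_TG(\mathcal{SP}))=1$ from closedness of $G(\mathcal{SP})$ and path continuity, with the same final appeals to Lemma \ref{Prop G early}(ii) and Remark \ref{Rem mass conserv}. The only detail you leave implicit (and which the paper states explicitly) is that (\ref{aux_revised1}) must be transferred, via the marginal identity and Fubini, into the statement that $\eta$-a.e.\ path fulfills the integrability condition (\ref{2}) (equivalently $\mu_t \in A_t$ for a.e.\ $t$), which is needed both to identify $\bar{B}_i(t,G(\mu_t))$ with $\int_{\mathbb{R}^d}\mathcal{L}_{t,\mu_t}g_i\,d\mu_t$ (since $\bar{B}$ is set to zero off $G(A_t)$) and to satisfy the hypothesis of Lemma \ref{Prop G early}(ii).
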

 
 The proof relies on a superposition principle for measure-valued solution curves of continuity equations on $\mathbb{R}^{\infty}$ and its corresponding differential equation, which we recall in Proposition \ref{Sp-pr. Rinfty prop} below. More precisely, we proceed in three steps. First, we transfer $(\Gamma_t)_{t \leq T}$ to a solution $(\bar{\Gamma}_t)_{t \leq T}$ to (\ref{Rinfty-CE}). Then, by Proposition \ref{Sp-pr. Rinfty prop} below we obtain a measure $\bar{\eta} \in \mathcal{P}(C_T\mathbb{R}^{\infty})$ with $\bar{\eta} \circ (e^{\infty}_t)^{-1} = \bar{\Gamma_t}$, which is concentrated on solution curves to (\ref{Rinfty-ODE}). Finally, we transfer $\bar{\eta}$ back to a measures $\eta \in \mathcal{P}(C_T\mathcal{SP})$ with the desired properties. Below, we denote by $\mathcal{F}C^1_b(\mathbb{R}^{\infty})$ the set of test functions of same type as in $\mathcal{F}C^2_b(\mathbb{R}^{\infty})$, but with $f \in C^1_b(\mathbb{R}^n)$ in place of $f \in \mathcal{F}C^2_b(\mathbb{R}^n)$.
 \begin{prop}\label{Sp-pr. Rinfty prop} [Superposition principle on $\mathbb{R}^{\infty}$, Thm. 7.1. \cite{ambrosio2014}]
 	Let $(\bar{\Gamma}_t)_{t \leq T}$ be a solution to (\ref{Rinfty-CE}) in the sense of Definition \ref{Def sol Rinfty-eq} (ii) with test functions $\mathcal{F}C^1_b(\mathbb{R}^{\infty})$ instead of $\mathcal{F}C^2_b(\mathbb{R}^{\infty})$. Then, there exists a Borel measures $\bar{\eta} \in \mathcal{P}(C_T\mathbb{R}^{\infty})$ concentrated on solutions to (\ref{Rinfty-ODE}) in the sense of Definition \ref{Def sol Rinfty-eq} (i) such that $$\bar{\eta} \circ (e^{\infty}_t)^{-1} = \bar{\Gamma}_t, \,\,t \leq T.$$
 \end{prop}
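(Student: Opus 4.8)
The plan is to deduce this infinite-dimensional statement from the classical finite-dimensional superposition principle of Ambrosio \cite{Ambrosio2008} by freezing finitely many coordinates, and then to reassemble the resulting path measures on $C_T\mathbb{R}^\infty$ through a tightness and projective-limit argument. Concretely, I would project $(\bar{\Gamma}_t)$ onto $\mathbb{R}^n$ via $\pi_n=(p_1,\dots,p_n)$, produce by Ambrosio's theorem a path measure $\bar{\eta}^n$ on $C_T\mathbb{R}^n$ concentrated on the corresponding $n$-dimensional ODE, pass to a projective limit $\bar{\eta}$ on $C_T\mathbb{R}^\infty$, and finally show that $\bar{\eta}$ is concentrated on genuine solutions of (\ref{Rinfty-ODE}). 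The only structural inputs are the integrability condition (\ref{8}) and the weak continuity of $(\bar{\Gamma}_t)_{t\le T}$.

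\emph{Finite-dimensional reduction.} For each $n$ I set $\bar{\Gamma}^n_t:=(\pi_n)_{\#}\bar{\Gamma}_t\in\mathcal{P}(\mathbb{R}^n)$ and disintegrate $\bar{\Gamma}_t=\int_{\mathbb{R}^n}\bar{\Gamma}_t^{n,x}\,d\bar{\Gamma}^n_t(x)$ along $\pi_n$, defining the effective (conditional) drift
\[ b^n_t(x):=\int_{\mathbb{R}^\infty}(\bar{B}_1,\dots,\bar{B}_n)(t,z)\,d\bar{\Gamma}_t^{n,x}(z),\qquad b^n_{t,k}\circ\pi_n=\mathbb{E}_{\bar{\Gamma}_t}\bigl[\bar{B}_k(t,\cdot)\mid\sigma(\pi_n)\bigr]\ (k\le n). \]
Testing (\ref{Rinfty-CE}) against $\bar{F}=f\circ\pi_n$ with $f\in C^1_b(\mathbb{R}^n)$ shows that $(\bar{\Gamma}^n_t)_{t\le T}$ is a weakly continuous solution of the continuity equation $\partial_t\bar{\Gamma}^n_t=-\nabla\cdot(b^n_t\bar{\Gamma}^n_t)$ on $\mathbb{R}^n$, and since conditional expectation is an $L^1$-contraction, (\ref{8}) yields $\int_0^T\!\int_{\mathbb{R}^n}|b^n_{t,k}|\,d\bar{\Gamma}^n_t\,dt\le\int_0^T\!\int_{\mathbb{R}^\infty}|\bar{B}_k|\,d\bar{\Gamma}_t\,dt<\infty$. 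Ambrosio's principle on $\mathbb{R}^n$ then provides $\bar{\eta}^n\in\mathcal{P}(C_T\mathbb{R}^n)$ concentrated on solutions of $\dot{\gamma}=b^n_t(\gamma)$ with $(e^n_t)_{\#}\bar{\eta}^n=\bar{\Gamma}^n_t$ for all $t$, where $e^n_t$ is evaluation at time $t$.

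\emph{Tightness and assembly.} For $m\le n$ let $\Pi^n_m:C_T\mathbb{R}^n\to C_T\mathbb{R}^m$ drop the last $n-m$ coordinates and put $\bar{\eta}^n_m:=(\Pi^n_m)_{\#}\bar{\eta}^n$. Fixing $m$ and using $p_k(\gamma_t)-p_k(\gamma_s)=\int_s^t b^n_{r,k}(\gamma_r)\,dr$ along $b^n$-solutions gives
\[ \int_{C_T\mathbb{R}^n}|p_k(\gamma_t)-p_k(\gamma_s)|\,d\bar{\eta}^n(\gamma)\le\int_s^t\!\!\int_{\mathbb{R}^n}|b^n_{r,k}|\,d\bar{\Gamma}^n_r\,dr\le\int_s^t\beta_k(r)\,dr, \]
with $\beta_k\in L^1([0,T])$ independent of $n$, while $p_k(\gamma_0)$ has the fixed law $(p_k)_{\#}\bar{\Gamma}_0$. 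A maximal inequality together with Arzel\`a--Ascoli applied coordinatewise makes $\{\bar{\eta}^n_m\}_{n\ge m}$ tight in $\mathcal{P}(C_T\mathbb{R}^m)$, so by a diagonal argument I extract a subsequence $(n_j)$ with $\bar{\eta}^{n_j}_m\to\nu_m$ weakly for every $m$. Since the maps $\Pi^n_m$ are compatible and pushforward under continuous maps preserves weak limits, $\{\nu_m\}_m$ is consistent, and Kolmogorov's extension theorem produces $\bar{\eta}\in\mathcal{P}(C_T\mathbb{R}^\infty)$ with $(\Pi_m)_{\#}\bar{\eta}=\nu_m$. Passing to the limit in $(e^m_t)_{\#}\bar{\eta}^{n_j}_m=(\pi_m)_{\#}\bar{\Gamma}^{n_j}_t=(\pi_m)_{\#}\bar{\Gamma}_t$ then gives $(e^\infty_t)_{\#}\bar{\eta}=\bar{\Gamma}_t$ for all $t$.

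\emph{Concentration on solutions; the main obstacle.} It remains to show that for each $k$ the functional
\[ \Phi_k(z):=\sup_{t\le T}\Bigl|p_k(z_t)-p_k(z_0)-\int_0^t\bar{B}_k(s,z_s)\,ds\Bigr| \]
vanishes $\bar{\eta}$-a.e. Under $\bar{\eta}^{n_j}$ the same expression with $b^{n_j}_{\cdot,k}$ in place of $\bar{B}_k$ is identically zero, so the entire difficulty is to pass $b^{n_j}_{\cdot,k}\to\bar{B}_k$ to the limit while coping with the fact that $z\mapsto\int_0^t\bar{B}_k(s,z_s)\,ds$ is \emph{not} continuous on $C_T\mathbb{R}^\infty$, $\bar{B}_k$ being merely Borel. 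Two ingredients resolve this. First, because $\sigma(\pi_n)\uparrow\mathcal{B}(\mathbb{R}^\infty)$, the martingale convergence theorem yields $b^n_{t,k}\circ\pi_n\to\bar{B}_k(t,\cdot)$ in $L^1(\bar{\Gamma}_t)$ for a.e. $t$, whence $\int_0^T\!\int|b^n_{t,k}\circ\pi_n-\bar{B}_k(t,\cdot)|\,d\bar{\Gamma}_t\,dt\to0$ by dominated convergence, controlling the error from the approximate drift. Second, the lack of continuity is handled by lower semicontinuity: for nonnegative lower-semicontinuous $c$ the map $z\mapsto\int_0^T c(s,z_s)\,ds$ is l.s.c. on $C_T\mathbb{R}^\infty$, so approximating $\bar{B}_k$ from below by continuous functions in $L^1(d\bar{\Gamma}_t\,dt)$ and combining with the uniform $L^1$ bounds gives $\int\Phi_k\,d\bar{\eta}\le\liminf_j\int\Phi^{(n_j)}_k\,d\bar{\eta}^{n_j}=0$, where $\Phi^{(n)}_k$ is the analogue of $\Phi_k$ with $b^n_{\cdot,k}$ in place of $\bar{B}_k$. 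I expect precisely this last reconciliation --- weak convergence of path measures against a non-local, purely measurable drift, via the martingale-convergence and lower-semicontinuity combination --- to be the genuine obstacle, the reduction and tightness steps being comparatively routine once (\ref{8}) is exploited.
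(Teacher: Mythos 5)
The first thing to note is that the paper contains no proof of this statement: Proposition \ref{Sp-pr. Rinfty prop} is quoted as an external result (Thm.\ 7.1 of \cite{ambrosio2014}) and used as a black box in the proof of Theorem \ref{main thm det case}. So your proposal is not competing with an argument in the paper; it is an attempt to reconstruct the proof of the cited theorem, and your three-step scheme (projection to $\mathbb{R}^n$ with conditional expectations as effective drifts, Ambrosio's finite-dimensional superposition principle, tightness and projective limit, identification of the limit) is in fact the strategy of Ambrosio and Trevisan. Your Step 1 is correct, modulo the routine point that one must choose a jointly Borel version of $(t,x)\mapsto b^n_t(x)$. Your Step 3 isolates the right difficulty and the right ingredients: martingale convergence giving $b^n_{t,k}\circ\pi_n\to\bar B_k(t,\cdot)$ in $L^1(d\bar\Gamma_t\,dt)$, plus replacement of the merely Borel drift by bounded continuous \emph{cylindrical} functions, with the error terms controlled through the marginal identities $\bar\eta^{n_j}\circ(e^\infty_t)^{-1}=\bar\Gamma^{n_j}_t$ and $\bar\eta\circ(e^\infty_t)^{-1}=\bar\Gamma_t$; the precise bookkeeping is a two-sided $L^1$ triangle-inequality argument rather than your one-sided ``approximation from below by l.s.c.\ functions'', but the mechanism is the correct and standard one.

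The genuine gap is in your tightness step. The estimate you actually establish --- $\int|p_k(\gamma_t)-p_k(\gamma_s)|\,d\bar\eta^n\le\int_s^t\beta_k(r)\,dr$ with $\beta_k\in L^1([0,T])$ independent of $n$ --- does not imply tightness of $\{\bar\eta^n_m\}_{n\ge m}$ in $\mathcal{P}(C_T\mathbb{R}^m)$, and no maximal inequality plus Arzel\`a--Ascoli can extract tightness from that bound alone. Counterexample in one dimension: let $\gamma^{(j)}$ stay at $0$, run from $0$ to $1$ at speed $j$ over the window $[\tau,\tau+1/j]$, then stay at $1$, with $\tau$ uniformly distributed on $[0,T-1/j]$; these are a.e.\ solutions of $\dot\gamma_r=b^{(j)}_r(\gamma_r)$ for $b^{(j)}_r(x)=j\,\mathds{1}_{(0,1)}(x)$, their time marginals solve the associated continuity equation, and they satisfy your increment bound and the domination $\int|b^{(j)}_r|\,d\mu^{(j)}_r\le 2/T$ uniformly in $j$, yet the laws of $\gamma^{(j)}$ converge to those of a jump process and are not tight in $C_T\mathbb{R}$. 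What excludes this behaviour in your situation is structure you have but never use: $b^n_{t,k}\circ\pi_n$ is a conditional expectation of the \emph{fixed} function $\bar B_k\in L^1(d\bar\Gamma_t\,dt)$. By de la Vall\'ee Poussin choose a convex superlinear $\Theta$ with $\int_0^T\int\Theta(|\bar B_k|)\,d\bar\Gamma_t\,dt<\infty$; Jensen's inequality for conditional expectations gives $\int_0^T\int_{\mathbb{R}^n}\Theta(|b^n_{t,k}|)\,d\bar\Gamma^n_t\,dt\le\int_0^T\int\Theta(|\bar B_k|)\,d\bar\Gamma_t\,dt$ uniformly in $n$, hence a uniform bound on $\int\bigl(\int_0^T\Theta(|\dot\gamma_k(r)|)\,dr\bigr)d\bar\eta^n$; sublevel sets of $\gamma\mapsto|\gamma_k(0)|+\int_0^T\Theta(|\dot\gamma_k|)\,dr$ are compact in $C_T\mathbb{R}$ (superlinearity of $\Theta$ now yields equicontinuity), and Chebyshev gives tightness. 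With this repair --- which is exactly the ingredient used in the cited proof --- your scheme goes through.
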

 We proceed to the proof of the main result.\\
 \\
 \textbf{Proof of Theorem \ref{main thm det case}:} Let $\Gamma = (\Gamma_t)_{t \leq T}$ be a weakly continuous solution to (\ref{P-CE}) as in Definition \ref{Def sol P-CE}. \\
 \textbf{Step 1: From (\ref{P-CE}) to (\ref{Rinfty-CE}):} Set 
 $$\bar{\Gamma}_t := \Gamma_t \circ G^{-1},$$
 with $G$ as in Lemma \ref{Lem aux G, J}, which corresponds to the fixed set of functions $\mathcal{G}$. Since $G$ is continuous, $(\bar{\Gamma}_t)_{t \leq T}$ is a weakly continuous curve of Borel subprobability measures on $\mathbb{R}^{\infty}$. We show that $(\bar{\Gamma}_t)_{t \leq T}$ solves (\ref{Rinfty-CE}). Indeed, the integrability condition (\ref{8}) is fulfilled, since $(\Gamma_t)_{t \leq T}$ fulfills Definition \ref{Def sol P-CE}. Further, since $\Gamma$ solves (\ref{P-CE}), we have for any $\mathcal{F}C^2_b(\mathcal{G}) \ni F: \mu \mapsto f\big(\mu(g_1),\dots,\mu(g_n)\big)$ and $t \in [0,T]$
 \begin{equation}\label{extra1}
 \int_0^t \int_{\mathcal{SP}}\mathbf{L}_sF(\mu)d\Gamma_s(\mu)ds = \int_{\mathcal{SP}}F(\mu)d\Gamma_t(\mu) - \int_{\mathcal{SP}}F(\mu)d\Gamma_0(\mu)
 \end{equation}	and hence, abbreviating $p_k \circ B(t,\cdot)$ by $B^k_t$ and setting $\bar{F} = f \circ \pi_n$ for $f$ as above, we have
 \begin{align*}
 \int_0^t \int_{\mathcal{SP}}\mathbf{L}_sF(\mu)d\Gamma_s(\mu)ds &= \int_0^t \int_{\mathcal{SP}}\sum_{k =1}^{n} (\partial_kf)\big(\mu(g_1),\dots,\mu(g_n)\big) \bigg(\int_{\mathbb{R}^d}\mathcal{L}_{s,\mu}g_k(x)d\mu(x)\bigg) \Gamma_s(\mu)ds \\& =
 \int_0^t \int_{\mathcal{SP}} \sum_{k=1}^n (\partial_k f)\big(\mu(g_1),\dots,\mu(g_n)\big)B^k_s(\mu)d\Gamma_s(\mu)ds \\&
 = \int_0^t \int_{\mathcal{SP}} \sum_{k=1}^n(\partial_k f)\big(p_1 \circ G(\mu),\dots,p_n\circ G(\mu)\big)\bar{B}^k_s \circ G(\mu)d\Gamma_s(\mu)ds\\&
 =\int_0^t \int_{\mathbb{R}^{\infty}} \nabla \bar{F}(z) \cdot \bar{B}_s(z)\bar{\Gamma}_s(z)ds
 \end{align*}and, furthermore, for each $s \in [0,T]$
 $$\int_{\mathcal{SP}}F(\mu)d\Gamma_s(\mu) = \int_{\mathcal{SP}}f\big(p_1 \circ G(\mu),\dots,p_n \circ G(\mu)\big)d\Gamma_s(\mu) = \int_{\mathbb{R}^{\infty}}\bar{F}(z)d\bar{\Gamma}_s(z).$$
 Comparing with (\ref{extra1}), it follows that $(\bar{\Gamma}_t)_{t \leq T}$ is a solution to (\ref{Rinfty-CE}) as claimed, because $F \in \mathcal{F}C^2_b(\mathcal{G})$ was arbitrary and hence $\bar{F}$ as above is arbitrary in $\mathcal{F}C^2_b(\mathbb{R}^{\infty})$. By standard approximation, one extends the above equation to test functions $\bar{F}$ from $\mathcal{F}C^1_b(\mathbb{R}^\infty)$.\\ 
 \\
 \textbf{Step 2: From (\ref{Rinfty-CE}) to (\ref{Rinfty-ODE}):} Proposition \ref{Sp-pr. Rinfty prop} implies the existence of a measure $\bar{\eta} \in \mathcal{P}(C_T\mathbb{R}^{\infty})$ such that
 \begin{enumerate}
 	\item [(i)] $\bar{\eta} \circ (e^{\infty}_t)^{-1} = \bar{\Gamma}_t$ for each $t \in [0,T]$
 	\item[(ii)] $\bar{\eta}$ is concentrated on solution paths of (\ref{Rinfty-ODE}).
 \end{enumerate}
\textbf{Step 3: From (\ref{Rinfty-ODE}) to (\ref{NLFPKE}):} We show that the measure $\eta := \bar{\eta} \circ (J^{-1})^{-1}$, with $J$ as in Lemma \ref{Lem aux G, J} fulfills all desired properties. Indeed, since
$$\bar{\eta} \circ (e^{\infty}_t)^{-1} = \bar{\Gamma}_t = \Gamma_t \circ G^{-1},$$
for each $t \in [0,T]$ we deduce that $\bar{\eta} \circ (e^{\infty}_t)^{-1}$ is concentrated on $G(\mathcal{SP})$. By Lemma \ref{Lem aux G, J}, $G(\mathcal{SP}) \subseteq \mathbb{R}^{\infty}$ is closed. Since by construction $\bar{\eta}$ is concentrated on continuous curves in $\mathbb{R}^{\infty}$, $\bar{\eta}$ is concentrated on $C_TG(\mathcal{SP})$. Further, $C_TG(\mathcal{SP}) \subseteq C_T\mathbb{R}^{\infty}$ is a measurable set and $J^{-1}: C_TG(\mathcal{SP}) \to C_T\mathcal{SP}$ is measurable by Lemma \ref{Lem aux G, J}. Therefore, we may define $\eta \in \mathcal{P}(C_T\mathcal{SP})$ via
$$ \eta := \bar{\eta} \circ (J^{-1})^{-1}.$$
It remains to verify $\eta \circ e_t^{-1} = \Gamma_t$ for all $t \in [0,T]$ and that $\eta$ is concentrated on subprobability solutions to (\ref{NLFPKE}). Concerning the first matter, we have
$$ \eta \circ e_t^{-1} = \bar{\eta} \circ (J^{-1})^{-1} \circ e_t^{-1} = \bar{\eta} \circ (e_t \circ J^{-1})^{-1}$$
and
$$\Gamma_t = \Gamma_t \circ (G^{-1} \circ G)^{-1} = \bar{\Gamma}_t \circ (G^{-1})^{-1} = \bar{\eta} \circ (G^{-1}\circ e_t^{\infty})^{-1}.$$
Since $e_t\circ J^{-1}$ and $G^{-1} \circ e_t^{\infty}$ coincide as measurable maps on $C_TG(\mathcal{SP})$ and it was shown above that $\bar{\eta}$ is concentrated on $C_TG(\mathcal{SP})$, we obtain
$$\eta \circ e_t^{-1} = \Gamma_t, \,\, t \leq T.$$

Concerning the second aspect, note that  by definition of $\eta$ and $\bar{\Gamma}_t$ and by the equality $e_t \circ J^{-1} = G^{-1}\circ e_t^\infty$,  \eqref{aux_revised1} for $\Gamma$ implies that $\eta$ is concentrated on vaguely continuous curves $t \mapsto \mu_t$ in $\mathcal{SP}$ with the global integrability property \eqref{2}  such that $t \mapsto G(\mu_t)$ is a solution to \eqref{Rinfty-ODE}. Each such curve $t \mapsto \mu_t$ is a subprobability solution to \eqref{NLFPKE}. Indeed, due to $\mu_t \in A_t$ $dt$-a.s., we have 
\begin{align*}
\frac{d}{dt}p_k \circ G(\mu_t) =&\, p_k \circ \bar{B}(t,G(\mu_t))\quad dt-a.s. \iff \frac{d}{dt}p_k \circ G(\mu_t) = p_k \circ B(t,\mu_t)\quad dt-a.s. \\&
\iff \frac{d}{dt}\int_{\mathbb{R}^d}g_k(x)d\mu_t(x) = \int_{\mathbb{R}^d}\mathcal{L}_{t,\mu_t}g_k(x)d\mu_t(x) \quad dt-a.s.\\&
\iff \int_{\mathbb{R}^d}g_k d\mu_t- \int_{\mathbb{R}^d}g_k d\mu_0 = \int_0^t \int_{\mathbb{R}^d}\mathcal{L}_{s,\mu_s}g_k (x)d\mu_s(x)ds, \,\, t \in [0,T],
\end{align*}
and Lemma \ref{Prop G early} (ii) applies.
It remains to prove the additional assertion about probability solutions. To this end, assume $\Gamma_0$ is concentrated on $\mathcal{P}$. Then, $\eta(e_0 \in \mathcal{P}) = 1$ and hence the claim follows by Remark \ref{Rem mass conserv}. \qed \\
\\
The final assertion of the theorem in particular implies: If $\Gamma_0 \in \mathcal{P}(\mathcal{P})$ for a weakly continuous solution $(\Gamma_t)_{t \leq T} \subseteq \mathcal{P}(\mathcal{SP})$ to (\ref{P-CE}), then $\Gamma_t \in \mathcal{P}(\mathcal{P})$ for each $t \leq T$. Of course, this is to be expected due to the global integrability condition in Definition \ref{Def sol P-CE}.
\begin{rem}\label{Rem explain why SP}
Finally, let us explain why we developed the above result for subprobability solutions to (\ref{NLFPKE}) although our principal interest is restricted to probability solutions. If we directly consider solution curves $(\Gamma_t)_{t \leq T}$ to (\ref{P-CE}) with $\Gamma_t \in \mathcal{P}(\mathcal{P})$, we cannot prove that $\eta$ in Theorem \ref{main thm det case} is concentrated on $C_T\mathcal{P}$ (in fact, not even $\eta(C_T\mathcal{P}) >0$ could be shown). Indeed, inspecting the proof above, one may only prove that $\eta \circ e_t^{-1}$ is concentrated on $\mathcal{P}$ for each $t\leq T$. But since $\mathcal{P} \subseteq \mathcal{SP}$ is not closed, curves in the support of $\eta$ may be proper subprobability-valued at single times. The deeper reason for this is that the range $G(\mathcal{P})$ of $G$ as in \ref{Prop G early} as a map on $\mathcal{P}$ with the weak topology is not closed in $\mathbb{R}^{\infty}$. It seems that one cannot resolve this issue by simply changing the function set $\mathcal{G}$, since there exists no countable set of functions, which allows for a characterization of weak instead of vague convergence as in Lemma \ref{Prop G early}.
Since $\mathcal{SP}$ with the vague topology is compact and the vague test function class $C_c(\mathbb{R}^d)$ is separable, it is feasible to carry out the entire development for subprobability measures as above.

We also mention that to our understanding there is no inherent reason why the superposition principle could not be extended to larger spaces of measures (e.g. spaces of signed measures), as long as its topology allows for a suitable identification with $\mathbb{R}^\infty$ as in our present case. Our principal motivation from a probabilistic viewpoint was to study curves of probability measures, and we were only forced to extend to $\mathcal{SP}$, the vague closure of $\mathcal{P}$, by the reasons outlined above. In order to replace $\mathcal{SP}$ by some larger space of measures $\mathcal{M}$, it seems indispensable that Lemma \ref{Lem aux G, J} remains true, i.e. that the range of $\mathcal{M}$ under a suitable homeomorphism  is closed in $\mathbb{R}^\infty$.

\end{rem}

\subsection{Consequences and applications}
The following existence- and uniqueness results immediately follow from the superposition principle Theorem \ref{main thm det case} and provide an equivalence between the nonlinear FPK-equation (\ref{NLFPKE}) and its linearized continuity equation (\ref{P-CE}). 
\begin{kor}
	Let $\mu_0 \in \mathcal{SP}$ and assume there exists a solution to (\ref{P-CE}) with initial condition $\delta_{\mu_0}$. Then, there exists a subprobability solution to (\ref{NLFPKE}) with initial condition $\mu_0$. Moreover, if $\mu_0\in \mathcal{P}$, then there exists a probability solution to (\ref{NLFPKE}) with initial condition $\mu_0$.
\end{kor}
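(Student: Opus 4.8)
The plan is to apply Theorem \ref{main thm det case} directly to the assumed solution of (\ref{P-CE}) and then extract a single suitable path from the resulting superposition measure; the corollary is essentially a restatement of the superposition principle in the special case of a Dirac initial datum.

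First I would take the solution $(\Gamma_t)_{t \leq T}$ to (\ref{P-CE}) with $\Gamma_0 = \delta_{\mu_0}$ provided by the hypothesis. Theorem \ref{main thm det case} then furnishes a probability measure $\eta \in \mathcal{P}(C_T\mathcal{SP})$, concentrated on vaguely continuous subprobability solutions to (\ref{NLFPKE}), with $\eta \circ e_t^{-1} = \Gamma_t$ for all $t \in [0,T]$. The key observation is to evaluate this identity at $t = 0$, which gives $\eta \circ e_0^{-1} = \Gamma_0 = \delta_{\mu_0}$. Hence the measurable set $e_0^{-1}(\{\mu_0\})$ (measurable since $\{\mu_0\}$ is closed in the Polish space $\mathcal{SP}$) carries full $\eta$-measure, i.e. $\eta$-almost every path starts at $\mu_0$.

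Next I would combine this with the concentration property. Let $S \subseteq C_T\mathcal{SP}$ be a measurable set of subprobability solutions to (\ref{NLFPKE}) with $\eta(S) = 1$, which exists because $\eta$ is concentrated on such solutions. Then $S \cap e_0^{-1}(\{\mu_0\})$ again has $\eta$-measure one, and since $\eta$ is a probability measure this intersection is in particular nonempty. Any element $(\mu_t)_{t \leq T}$ of it is simultaneously a subprobability solution to (\ref{NLFPKE}) and satisfies $\mu_0 = e_0((\mu_t)_{t\leq T})$, which proves the first assertion. For the second assertion, if $\mu_0 \in \mathcal{P}$ then $\Gamma_0 = \delta_{\mu_0} \in \mathcal{P}(\mathcal{P})$, so the final clause of Theorem \ref{main thm det case} guarantees that $\eta$ is concentrated on weakly continuous probability solutions; repeating the intersection argument with $S$ now taken to be a full-measure set of probability solutions yields a probability solution starting at $\mu_0$.

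I do not anticipate any genuine obstacle here, as the entire substance is contained in Theorem \ref{main thm det case}. The only points requiring minor care are the measurability of $e_0^{-1}(\{\mu_0\})$ and the elementary but essential remark that the intersection of two sets of full measure under a probability measure is nonempty; this is precisely what converts the almost-sure statement of the superposition principle into the existence of one concrete solution curve.
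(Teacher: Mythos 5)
Your proposal is correct and follows exactly the paper's own argument: apply Theorem \ref{main thm det case} to the given solution with $\Gamma_0 = \delta_{\mu_0}$, note that $\eta \circ e_0^{-1} = \delta_{\mu_0}$ forces $\eta$-a.e.\ path to start at $\mu_0$, and intersect with the full-measure set of (sub)probability solutions to extract one concrete solution. The paper states this more tersely, but the substance, including the treatment of the probability case via the final clause of the theorem, is identical.
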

\begin{proof}
	By Theorem \ref{main thm det case} there exists a  probability measure $\eta$ concentrated on subprobability solutions to (\ref{NLFPKE}) with $\eta \circ e_0^{-1} = \delta_{\mu_0}$. Hence, at least one such solution to (\ref{NLFPKE}) with initial condition $\mu_0$ exists. The second assertion is treated similarly.
\end{proof}
\begin{kor}\label{Cor uniqueness det}
	Let $\mu_0 \in \mathcal{SP}$ and assume there exists at most one vaguely continuous subprobability solution to (\ref{NLFPKE}) with initial condition $\mu_0$. Then, there exists also at most one weakly continuous solution $(\Gamma_t)_{t \leq T}$ to (\ref{P-CE}) with initial condition $\delta_{\mu_0}$. If $\mu_0 \in \mathcal{P}$, then, in the case of existence, $\Gamma_t(\mathcal{P}) = 1$ for each $t \in [0,T]$.
\end{kor}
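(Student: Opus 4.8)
The plan is to deduce uniqueness for (\ref{P-CE}) from uniqueness for (\ref{NLFPKE}) by means of the superposition principle Theorem \ref{main thm det case}, exploiting the fact that the lifting measure $\eta$ is forced to be a Dirac mass as soon as the underlying nonlinear equation admits at most one solution issuing from $\mu_0$.

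First I would take two weakly continuous solutions $(\Gamma^1_t)_{t \leq T}$ and $(\Gamma^2_t)_{t \leq T}$ to (\ref{P-CE}), both with initial datum $\delta_{\mu_0}$, and apply Theorem \ref{main thm det case} to each. This yields $\eta_1, \eta_2 \in \mathcal{P}(C_T\mathcal{SP})$, each concentrated on vaguely continuous subprobability solutions to (\ref{NLFPKE}), with $\eta_i \circ e_t^{-1} = \Gamma^i_t$ for all $t \leq T$. Evaluating at $t=0$ gives $\eta_i \circ e_0^{-1} = \Gamma^i_0 = \delta_{\mu_0}$, so that $\eta_i(\{e_0 = \mu_0\}) = \delta_{\mu_0}(\{\mu_0\}) = 1$; hence each $\eta_i$ is concentrated on the set $S_{\mu_0}$ of vaguely continuous subprobability solutions to (\ref{NLFPKE}) issuing from $\mu_0$.

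The key step is then the concentration argument. By Lemma \ref{Prop G early} (ii) the property of being a subprobability solution is encoded by the countable family of conditions (\ref{3}) tested against $g_i \in \mathcal{G}$, so $S_{\mu_0}$ is a Borel subset of the standard Borel space $C_T\mathcal{SP}$. By hypothesis there is at most one such solution, so $S_{\mu_0}$ is either empty or a singleton. If $S_{\mu_0} = \emptyset$, no probability measure can be concentrated on it, so no solution to (\ref{P-CE}) with initial datum $\delta_{\mu_0}$ exists and the uniqueness claim holds vacuously. Otherwise $S_{\mu_0} = \{(\mu^*_t)_{t \leq T}\}$, and any probability measure concentrated on a single (Borel) point must be the Dirac mass $\delta_{(\mu^*_t)}$. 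Thus $\eta_1 = \eta_2 = \delta_{(\mu^*_t)}$ independently of the chosen solution to (\ref{P-CE}), and projecting back gives $\Gamma^1_t = \eta_1 \circ e_t^{-1} = \eta_2 \circ e_t^{-1} = \Gamma^2_t$ for every $t \leq T$, i.e. at most one solution to (\ref{P-CE}) with initial condition $\delta_{\mu_0}$.

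Finally, the assertion about probability solutions is immediate from the concluding part of Theorem \ref{main thm det case}: since $\mu_0 \in \mathcal{P}$ forces $\Gamma_0 = \delta_{\mu_0} \in \mathcal{P}(\mathcal{P})$, the lifting $\eta$ is concentrated on weakly continuous \emph{probability} solutions to (\ref{NLFPKE}), whence $\Gamma_t(\mathcal{P}) = \eta(e_t^{-1}(\mathcal{P})) = \eta(\{e_t \in \mathcal{P}\}) = 1$ for each $t \leq T$. The only delicate point I anticipate is the measure-theoretic bookkeeping in the third paragraph, namely verifying that $S_{\mu_0}$ is genuinely Borel so that ``concentrated on $S_{\mu_0}$'' is meaningful, and that a probability measure carried by a single point is necessarily Dirac; both are routine in the standard Borel setting but are the only places where care is required.
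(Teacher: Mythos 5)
Your proposal is correct and follows essentially the same route as the paper: lift both solutions of (\ref{P-CE}) via Theorem \ref{main thm det case}, use the uniqueness hypothesis for (\ref{NLFPKE}) to force $\eta_1 = \eta_2 = \delta_{(\mu^*_t)}$, project back, and handle the probability case via mass conservation (the paper invokes Remark \ref{Rem mass conserv} directly, you invoke the final assertion of Theorem \ref{main thm det case}, which amounts to the same thing). Your explicit treatment of the measure-theoretic bookkeeping — Borel measurability of the solution set $S_{\mu_0}$, the vacuous empty case, and singleton-implies-Dirac — spells out details the paper's one-line conclusion ``by assumption, $\eta^{(1)} = \delta_\mu = \eta^{(2)}$'' leaves implicit.
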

\begin{proof}
	Let $\Gamma^{(1)}$ and $\Gamma^{(2)}$ be weakly continuous solutions to (\ref{P-CE}) with $\Gamma^{(i)}_0 = \delta_{\mu_0}$ for $i \in \{1,2\}$. By Theorem \ref{main thm det case}, there exist probability measures $\eta^{(i)}$, $i \in \{1,2\}$, concentrated on subprobability solutions to (\ref{NLFPKE}) with initial condition $\mu_0$ such that $\eta^{(i)} \circ e_t^{-1} = \Gamma_t^{(i)}$ for each $t \in [0,T]$ and $i \in \{1,2\}$. By assumption, we obtain $\eta^{(1)}= \delta_{\mu}= \eta^{(2)}$ for a unique element $\mu \in C_T\mathcal{SP}$ and thus also $\Gamma^{(1)} = \Gamma^{(2)}$. If $\mu_0 \in \mathcal{P}$, then $\mu \in C_T\mathcal{P}$ by Remark \ref{Rem mass conserv}, which gives the second assertion. 
\end{proof}
\subsubsection{Application to coupled nonlinear-linear Fokker-Planck-Kolmogorov equations}
Using the superposition principle, we prove an open conjecture posed in \cite{Rckner-lin.-paper}. Let us shortly recapitulate the necessary framework. In \cite{Rckner-lin.-paper}, the authors consider a coupled nonlinear-linear FPK-equation of type 
\begin{equation}\label{9}
\begin{cases}
\partial_t \mu_t = \mathcal{L}^*_{t,\mu_t}\mu_t \\
\partial_t \nu_t = \mathcal{L}^*_{t,\mu_t}\nu_t,
\end{cases}
\end{equation}
i.e. comparing to our situation the first nonlinear equation is of type (\ref{NLFPKE}) and the second (linear) equation is obtained by "freezing" a solution $(\mu_t)_{t \leq T}$ to the first equation in the nonlinearity spot of $\mathcal{L}$. For an initial condition $(\bar{\mu},\bar{\nu}) \in \mathcal{P}\times \mathcal{P}$, (\ref{9}) is said to have a \textit{unique solution}, if there exists a unique probability solution $(\mu_t)_{t \leq T}$ to the first equation in the sense of Definition \ref{Sol NLFPKE} with $\mu_0 = \bar{\mu}$ and a unique weakly continuous curve $(\nu_t)_{t \leq T} \subseteq \mathcal{P}$, which solves the second equation with fixed coefficient $\mu_t$ with $\nu_0 = \bar{\nu}$ (we refer to \cite{Rckner-lin.-paper} for more details). The authors associate a linear continuity equation on $\mathbb{R}^d \times \mathcal{P}$ to (\ref{9}) in the following sense: Let $\mathbb{L}$ be the operator acting on functions 
$$\mathcal{C} := \big\{\Phi: (x,\mu) \mapsto \varphi(x)F(\mu)  | \varphi \in C^2_c(\mathbb{R}^d), F \in \mathcal{F}C^2_b(\mathcal{P}) \big\},$$
via 
$$\mathbb{L}_t\Phi(x,\mu) := \mathcal{L}_{t,\mu}\Phi(\cdot, \mu)(x)+\mathbf{L}_t\Phi(x, \cdot)(\mu),$$with $\mathcal{L}$ as in (\ref{1}) and $\mathbf{L}$ as in (\ref{5}).
Consider the continuity equation 
\begin{equation}\label{10}
\partial_t \Lambda_t = \mathbb{L}_t^*\Lambda_t, \,\, t \in [0,T]
\end{equation}for weakly continuous curves of Borel probability measures on $\mathbb{R}^d \times \mathcal{P}$. The exact notion of solution can be found in \cite{Rckner-lin.-paper}, where also the following observation is made: A pair $(\mu_t,\nu_t)_{t \leq T}$ solves (\ref{9}) if and only if $\Lambda_t := \nu_t \times \delta_{\mu_t}$ solves (\ref{10}). Using our main result, we prove the following conjecture posed in Remark 4.4. of \cite{Rckner-lin.-paper}.
\begin{prop}\label{Prop conj Rckner}
	If $(\mu_t,\nu_t)_{t \leq T}$ is the unique solution to (\ref{9}) with initial condition $(\bar{\mu},\bar{\nu}) \in \mathcal{P}\times \mathcal{P}$, then $(\nu_t \times \delta_{\mu_t})_{t \leq T}$ is the unique solution to (\ref{10}) with initial condition $\bar{\nu}\times \delta_{\bar{\mu}}$.
\end{prop}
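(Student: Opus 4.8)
The plan is to establish the equivalence of uniqueness for the two systems by leveraging the already-proved correspondence "$(\mu_t,\nu_t)$ solves \eqref{9} $\iff$ $\nu_t\times\delta_{\mu_t}$ solves \eqref{10}" together with the superposition principle of Theorem \ref{main thm det case}. The key observation is that the coupled system \eqref{9} has a built-in hierarchical structure: the first (nonlinear) equation for $\mu_t$ does not depend on $\nu_t$, so its uniqueness can be analyzed independently, and only afterwards does one freeze $\mu_t$ to obtain a genuinely \emph{linear} equation for $\nu_t$. I would therefore split the uniqueness claim for \eqref{10} into two parts accordingly.

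Let me sketch the steps. First, suppose $\Lambda^{(1)}$ and $\Lambda^{(2)}$ are two weakly continuous solutions to \eqref{10} with the common initial condition $\bar\nu\times\delta_{\bar\mu}$. The first task is to show that each $\Lambda^{(i)}_t$ is necessarily of the product-disintegration form $\nu^{(i)}_t\times\delta_{\mu^{(i)}_t}$, i.e.\ that the $\mathcal P$-marginal is a Dirac mass concentrated on a single curve $\mu^{(i)}_t$ solving the nonlinear equation. Here I would use the structural result recalled from \cite{Rckner-lin.-paper} in the reverse direction, and the assumed uniqueness of the nonlinear solution $\mu_t$ to pin down $\mu^{(1)}_t=\mu^{(2)}_t=\mu_t$. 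Concretely, projecting \eqref{10} onto test functions of the form $\Phi(x,\mu)=F(\mu)$ (independent of $x$) kills the $\mathcal L$-term and leaves exactly the continuity equation \eqref{P-CE} for the $\mathcal P$-marginal of $\Lambda^{(i)}$; Corollary \ref{Cor uniqueness det}, applied using the assumed at-most-one-solution hypothesis for \eqref{NLFPKE} with $\mu_0=\bar\mu$, then forces that marginal to equal $\delta_{\mu_t}$ for both $i$. This is where Theorem \ref{main thm det case} enters, via Corollary \ref{Cor uniqueness det}: uniqueness of the nonlinear FPK-solution lifts to uniqueness of the $\mathcal{SP}$-marginal of the linearized equation.

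Once $\mu_t$ is identified and common to both solutions, the remaining step is to identify the $\mathbb R^d$-component. With $\mu_t$ now \emph{fixed}, the equation \eqref{10} becomes, upon disintegration, the \emph{linear} FPK-equation $\partial_t\nu_t=\mathcal L^*_{t,\mu_t}\nu_t$ with frozen coefficient $\mu_t$, tested against $\varphi\in C^2_c(\mathbb R^d)$; the hypothesis that \eqref{9} has a \emph{unique solution} includes that this linear equation has a unique weakly continuous solution $\nu_t$ with $\nu_0=\bar\nu$. Hence $\nu^{(1)}_t=\nu^{(2)}_t=\nu_t$, and therefore $\Lambda^{(1)}_t=\nu_t\times\delta_{\mu_t}=\Lambda^{(2)}_t$, proving uniqueness. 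Existence of $\nu_t\times\delta_{\mu_t}$ as a solution is immediate from the equivalence recalled above, since $(\mu_t,\nu_t)$ solves \eqref{9} by hypothesis.

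\textbf{Main obstacle.} The delicate point is the first step: rigorously justifying that any solution $\Lambda_t$ of \eqref{10} has its $\mathcal P$-marginal equal to a Dirac-valued curve $\delta_{\mu_t}$, rather than a genuinely spread-out measure on $\mathcal P$. This is precisely the content that Corollary \ref{Cor uniqueness det} supplies, but one must verify carefully that restricting to $x$-independent test functions $\Phi(x,\mu)=F(\mu)$ does yield an honest solution of \eqref{P-CE} for the $\mathcal P$-marginal (in particular that the integrability condition \eqref{aux_revised1} transfers), and that the at-most-one-solution hypothesis for \eqref{NLFPKE} is exactly what Corollary \ref{Cor uniqueness det} requires. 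Making the disintegration $\Lambda_t=\int\nu\,\delta_{\mu}\,(\cdots)$ and the passage to the frozen linear equation precise is the part demanding the most care; the second step, by contrast, is an essentially immediate appeal to the assumed uniqueness of the linear equation.
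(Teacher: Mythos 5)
Your proposal is correct and follows essentially the same route as the paper's proof: the paper likewise uses Corollary \ref{Cor uniqueness det} to identify the $\mathcal{P}$-marginal (i.e.\ the pushforward under the projection $\varPi_2$) of any solution of (\ref{10}) with $(\delta_{\mu_t})_{t\le T}$, deduces the product structure $\Lambda_t^{(i)}=\gamma_t^{(i)}\times\delta_{\mu_t}$, and then invokes the assumed uniqueness of the frozen linear equation to conclude $\gamma^{(i)}_t=\nu_t$. The only cosmetic difference is that you phrase the marginal step via $x$-independent test functions (which, since $\varphi$ must be compactly supported, requires the same approximation $\varphi_l\nearrow 1$ as in Remark \ref{Rem mass conserv}), whereas the paper states this marginal property directly and calls it straightforward.
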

\begin{proof}
	By Corollary (\ref{Cor uniqueness det}), the unique solution to (\ref{P-CE}) with initial condition $\delta_{\bar{\mu}}$ is $(\delta_{\mu_t})_{t \leq T}$. Let $(\Lambda^{(1)}_t)_{t \leq T}$ and $(\Lambda^{(2)}_t)_{t \leq T}$ be two solutions to (\ref{10}) with initial condition $\bar{\nu}\times \delta_{\bar{\mu}}$. It is straightforward to check that the curves of second marginals $(\Lambda_t^{(1)}\circ \varPi_2^{-1})_{t \leq T}$ and $(\Lambda_t^{(2)}\circ \varPi_2^{-1})_{t \leq T}$ are probability solutions to (\ref{P-CE}) with initial condition $\delta_{\bar{\mu}}$ (where we denote by $\varPi_2$ the projection from $\mathbb{R}^d \times \mathcal{P}$ onto the second coordinate). Hence, for each $t \in [0,T]$
	$$\Lambda_t^{(1)} \circ \varPi_2^{-1} = \delta_{\mu_t} = \Lambda^{(2)}_t \circ \varPi_2^{-1}.$$
	Consequently, $\Lambda_t^{(i)}$ is of product type, i.e. $\Lambda^{(i)}_t = \gamma_t^{(i)}\times \delta_{\mu_t}$ for weakly continuous curves $(\gamma^{(i)}_t)_{t \leq T} \subseteq \mathcal{P}$, $i \in \{1,2\}$. It is immediate to show that each curve $\gamma^{(i)}$ solves the second equation of (\ref{9}) with fixed $\mu_t$ and initial condition $\bar{\nu}$. Hence, $\gamma^{(i)}_t = \nu_t$ for each $t \in [0,T]$ and $i \in \{1,2\}$, which implies $\Lambda^{(1)}_t = \Lambda^{(2)}_t$. Hence, the unique solution to (\ref{10}) with initial condition $\bar{\nu}\times \delta_{\bar{\mu}}$ is given by $(\nu_t \times \delta_{\mu_t})_{t \leq T}$. 
\end{proof}
\section{Superposition Principle for stochastic nonlinear Fokker-Planck-Kolmogorov Equations}
We make use of the following notation specific to the stochastic case.\\
\\
For two real-valued $n\times n$ matrices $A,B$ we write $A$:$B = \sum_{k,l = 1}^n A_{kl}B_{kl}$. We use the same notation for $A = (A_{kl})_{k,l \geq 1}$ and $B = (B_{kl})_{k,l \geq 1}$, if either $A$ or $B$ contain only finitely many non-trivial entries.\\
For the Hilbert space $\ell^2$ with topology induced by the usual inner product $\langle\cdot,\cdot \rangle _{\ell^2}$ and norm $||\cdot||_{\ell^2}$, we denote the space of continuous $\ell^2$-valued functions on $[0,T]$ by $C_T\ell^2$. On $\ell^2$ and $C_T\ell^2$, we unambiguously use the same notation $e_t, p_i$ and $\pi_n$ as on $\mathbb{R}^{\infty}$ and $C_T\mathbb{R}^{\infty}$ in the previous section. Reminiscent to the previous section, we set $\mathcal{B}(C_T\ell^2) = \sigma(e_t, t \in [0,T])$ and denote the set of probability measures on this space by $\mathcal{P}(C_T\ell^2)$. For $\sigma$-algebras $\mathcal{A}_1$, $\mathcal{A}_2$, we denote by $\mathcal{A}_1 \bigvee \mathcal{A}_2$ the \textit{$\sigma$-algebra generated by $\mathcal{A}_1$ and $\mathcal{A}_2$}.\\
\\
We call a filtered probability space $(\Omega, \mathcal{F}, (\mathcal{F}_t)_{t \leq T},\mathbb{P})$ \textit{complete}, provided both $\mathcal{F}$ and $\mathcal{F}_0$ contain all subsets of $\mathbb{P}$-negligible sets $N \in \mathcal{F}$ (i.e. $\mathbb{P}(N) = 0$). This notion does not require $(\mathcal{F}_t)_{t \leq T}$ to be right-continuous. A real-valued Wiener process $W = (W_t)_{t \leq T}$ on such a probability space is called an $\mathcal{F}_t$-\textit{Wiener process}, if $W_t$ is $\mathcal{F}_t$-adapted and $W_u-W_t$ is independent of $\mathcal{F}_t$ for each $0 \leq t \leq u \leq T$. Pathwise properties of stochastic processes such as continuity are to be understood up to a negligible set with respect to the underlying measure.\\
\\
As in the previous section, we consider $\mathcal{SP}$ as a compact Polish space with the vague topology. Let $d_1 \geq1$ and consider product-measurable coefficients on $[0,T]\times \mathcal{SP}\times \mathbb{R}^d$
$$a(t,\mu,x)=(a_{ij}(t,\mu,x)) \in \mathbb{S}^+_d,\,\,b(t,\mu,x) = (b_i(t,\mu,x))_{i \leq d} \in \mathbb{R}^d,\, \sigma(t,\mu,x)= (\sigma_{ij}(t,\mu,x))_{i,j \leq d} \in \mathbb{R}^{d\times d_1}$$
such that $\sigma$ is bounded, 
and let $\mathcal{L}$ be as before, i.e.
$$\mathcal{L}_{t, \mu} \varphi(x) = b_i(t,\mu,x)\partial_i \varphi(x)+ a_{ij}(t,\mu,x)\partial^2_{ij}\varphi(x)$$
for $\varphi \in C^2(\mathbb{R}^d)$ and $(t,\mu,x) \in [0,T]\times \mathcal{SP}\times \mathbb{R}^d$.\\ 
\\
In contrast to the deterministic framework of the previous section, here we consider nonlinear \textit{stochastic} FPK-equations of type (\ref{SNLFPKE}) on $[0,T]$, to be understood in distributional sense as follows. With slight abuse of notation, for $\sigma \in \mathbb{R}^{d\times d_1}$ and $x \in \mathbb{R}^d$, we write $\sigma \cdot x = (\sum_{i=1}^d\sigma^{ik}x_i)_{k \leq d_1}$, which is consistent with the standard inner product notation $\sigma \cdot x$ in the case $d_1 =1$. 
\begin{dfn}\label{Def sol SNLFPKE}
	\begin{enumerate}
		\item [(i)] A pair $(\mu, W)$ consisting of an $\mathcal{F}_t$-adapted vaguely continuous $\mathcal{SP}$-valued stochastic process $\mu = (\mu_t)_{t \leq T}$ and an $\mathcal{F}_t$-adapted, $d_1$-dimensional Wiener process $W = (W_t)_{t \leq T}$ on a complete probability space $(\Omega, \mathcal{F}, (\mathcal{F}_t)_{t \leq T},\mathbb{P})$ is a \textit{subprobability solution to }(\ref{SNLFPKE}), provided 
		\begin{equation}\label{2_int_SNLFPKE}
			\int_0^T\int_{\mathbb{R}^d}|b_i(t,\mu_t,x)|+|a_{ij}(t,\mu_t,x)| + |\sigma_{ik}(t,\mu_t,x)|^2d\mu_t(x)dt < \infty \quad \mathbb{P}\text{-a.s.}
		\end{equation}
		for each $i,j \leq d, k \leq d_1$, and 
		\begin{equation}\label{11}
		\int_{\mathbb{R}^d} \varphi(x) d\mu_t(x) - 	\int_{\mathbb{R}^d} \varphi(x) d\mu_0(x) = \int_0^t \int_{\mathbb{R}^d}\mathcal{L}_{s,\mu_s}\varphi(x) d\mu_s(x)ds + \int_0^t \int_{\mathbb{R}^d} \sigma(s,\mu_s,x)\cdot \nabla \varphi(x)d \mu_s(x)dW_s
		\end{equation}holds $\mathbb{P}$-a.s. for each $t \in [0,T]$ and $\varphi \in C^2_c(\mathbb{R}^d)$.
		\item[(ii)] A \textit{probability solution to } (\ref{SNLFPKE}) is a pair as above such that $\mu$ is a $\mathcal{P}$-valued process $(\mu_t)_{t \leq T}$ with weakly continuous paths.
	\end{enumerate}
\end{dfn}
\begin{rem}
	\begin{enumerate}
		\item [(i)] Since $C^2_c(\mathbb{R}^d)$ is separable with respect to uniform convergence and since the paths $t \mapsto \mu_t(\omega)$ are vaguely continuous, the exceptional sets in the above definition can be chosen independently of $\varphi $ and $t$.
		\item[(ii)] The first integral on the right-hand side of (\ref{11}) is a pathwise (that is, for individual fixed $\omega \in \Omega$) integral with respect to the finite measure $\mu_s(\omega)ds$ on $[0,T]\times \mathbb{R}^d$. The second integral is a stochastic integral, which is defined, since the integrand
		$$(t,\omega) \mapsto \int_{\mathbb{R}^d}\sigma(t,\mu_t(\omega),x)\cdot \nabla \varphi(x)d\mu_t(\omega)(x)$$
		is $\mathbb{R}^{d_1}$-valued, bounded, product-measurable and $\mathcal{F}_t$-adapted (Thm. 3.8 \cite{ChungWilliamsStochInt}). More precisely,
		$$\int_0^t \int_{\mathbb{R}^d} \sigma(s,\mu_s,x)\cdot \nabla \varphi(x)d \mu_s(x)dW_s = \sum_{\alpha=1}^{d_1} \int_0^t\int_{\mathbb{R}^d}\sigma^{\alpha} \cdot \nabla \varphi d\mu_sdW^{\alpha}_s,$$where $\sigma^{\alpha}= (\sigma^{i\alpha})_{i \leq d}$ denotes the $\alpha$-th column of $\sigma$ and the components $W^{\alpha}$, $\alpha \leq d_1$, of $W$ are real, independent Wiener processes.
	\end{enumerate}
\end{rem}
By the global integrability assumption \eqref{2_int_SNLFPKE} and since $\sigma$ is bounded, we obtain (in analogy to Remark \ref{Rem mass conserv}) the following conservation of mass, which we use to prove the final assertion of the main result Theorem \ref{main thm stoch case}.
\begin{lem}\label{Lem consv of mass stochastic}
	Let $(\mu_t)_{t \leq T}$ be a subprobability solution to (\ref{SNLFPKE}). If $\mu_0 \in \mathcal{P}$ $\mathbb{P}$-a.s., then the paths of $t \mapsto \mu_t$ are $\mathcal{P}$-valued $\mathbb{P}$-a.s. and, hence, in particular weakly continuous.
\end{lem}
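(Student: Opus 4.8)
The plan is to mimic the deterministic argument of Remark \ref{Rem mass conserv}(i), choosing a suitable sequence of cutoff functions $\varphi_l \in C^2_c(\mathbb{R}^d)$ with $0 \leq \varphi_l \nearrow 1$ pointwise, all first- and second-order derivatives bounded uniformly in $l$ by some $M < \infty$, and with $\partial_i \varphi_l \to 0$, $\partial^2_{ij}\varphi_l \to 0$ pointwise. The new feature compared to the deterministic case is the stochastic integral term in \eqref{11}, so the main point is to show that this term vanishes in the limit $l \to \infty$. First I would write \eqref{11} for each such $\varphi_l$ and fix $t \in [0,T]$.

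For the two Lebesgue-type terms the argument is identical to Remark \ref{Rem mass conserv}: since $\mathcal{L}_{s,\mu_s}\varphi_l(x) = a_{ij}\partial^2_{ij}\varphi_l + b_i\partial_i\varphi_l$ is dominated by $M(\sum_{i,j}|a_{ij}(s,\mu_s,x)| + \sum_i |b_i(s,\mu_s,x)|)$, which is $\mu_s(dx)\,ds$-integrable $\mathbb{P}$-a.s. by \eqref{2_int_SNLFPKE}, dominated convergence gives that $\int_0^t \int_{\mathbb{R}^d}\mathcal{L}_{s,\mu_s}\varphi_l \,d\mu_s\,ds \to 0$ pathwise $\mathbb{P}$-a.s. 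For the stochastic term, the integrand is $\int_{\mathbb{R}^d}\sigma(s,\mu_s,x)\cdot \nabla\varphi_l(x)\,d\mu_s(x)$, whose $k$-th component is $\sum_{i=1}^d \int_{\mathbb{R}^d}\sigma_{ik}(s,\mu_s,x)\,\partial_i\varphi_l(x)\,d\mu_s(x)$; since $|\partial_i\varphi_l| \leq M$ and $\sigma$ is bounded, each such integrand is bounded uniformly in $l$ and in $(s,\omega)$ by a deterministic constant (using $\mu_s(\mathbb{R}^d)\leq 1$), and converges to $0$ pointwise in $(s,\omega)$ by dominated convergence in the $x$-variable as $l \to \infty$.

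The main obstacle — and the one genuinely new ingredient — is controlling the stochastic integral. The appropriate tool is the Itô isometry together with the dominated convergence theorem for stochastic integrals: writing the term as $\sum_{\alpha=1}^{d_1}\int_0^t H^\alpha_l(s)\,dW^\alpha_s$ with $H^\alpha_l(s) = \int_{\mathbb{R}^d}\sigma^\alpha(s,\mu_s,x)\cdot \nabla\varphi_l(x)\,d\mu_s(x)$, I have uniform boundedness $|H^\alpha_l(s,\omega)| \leq C$ for a constant $C$ depending only on $\|\sigma\|_\infty$ and $M$, and pointwise convergence $H^\alpha_l(s,\omega)\to 0$. By the stochastic dominated convergence theorem, $\mathbb{E}\big[\big(\int_0^t H^\alpha_l(s)\,dW^\alpha_s\big)^2\big] = \mathbb{E}\int_0^t |H^\alpha_l(s)|^2\,ds \to 0$, so the stochastic integrals converge to $0$ in $L^2(\mathbb{P})$, hence along a subsequence $\mathbb{P}$-a.s. for the fixed $t$.

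Passing to the limit in \eqref{11} along this subsequence therefore yields $\mu_t(\mathbb{R}^d) - \mu_0(\mathbb{R}^d) = 0$ $\mathbb{P}$-a.s. for each fixed $t$; since $\mu_0 \in \mathcal{P}$ $\mathbb{P}$-a.s. we get $\mu_t(\mathbb{R}^d) = 1$ $\mathbb{P}$-a.s. Finally, to obtain the statement simultaneously for all $t$ I would use the vague continuity of $t \mapsto \mu_t$ together with lower semicontinuity of $\mu \mapsto \mu(\mathbb{R}^d)$: the null set can be chosen along a countable dense set of times $t$, and then $\mu_t(\mathbb{R}^d) = 1$ for all $t \in [0,T]$ $\mathbb{P}$-a.s. follows; once the total mass is constantly $1$, vague continuity upgrades to weak continuity by the standard fact that vague convergence of subprobability measures to a probability measure, with convergence of total masses, implies weak convergence.
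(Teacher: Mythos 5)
Your first three steps coincide with the paper's own proof: the same cutoff sequence $\varphi_l$ as in Remark \ref{Rem mass conserv}, pathwise dominated convergence (via \eqref{2_int_SNLFPKE}) for the Lebesgue terms, and the It\^{o} isometry plus boundedness of $\sigma$ to obtain, for each \emph{fixed} $t$, a subsequence along which the stochastic integral tends to $0$ $\mathbb{P}$-a.s. The gap is in your final interpolation step. Lower semicontinuity of $\mu \mapsto \mu(\mathbb{R}^d)$ under vague convergence goes the wrong way for your purpose: if $t_n \to t$, so that $\mu_{t_n} \to \mu_t$ vaguely, it yields $\mu_t(\mathbb{R}^d) \leq \liminf_n \mu_{t_n}(\mathbb{R}^d)$, i.e.\ it forbids mass from being \emph{created} in the limit, not from being \emph{lost}. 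Hence knowing $\mu_{t_n}(\mathbb{R}^d)=1$ on a countable dense set of times gives only the trivial bound $\mu_t(\mathbb{R}^d) \leq 1$ at the remaining times. The implication you invoke is genuinely false for vaguely continuous curves: on $\mathbb{R}$, fix $t_0 \in (0,T)$ and set $\mu_t := \tfrac12\delta_0 + \tfrac12\delta_{1/|t-t_0|}$ for $t \neq t_0$ and $\mu_{t_0} := \tfrac12\delta_0$. This curve is vaguely continuous and has total mass $1$ for every $t \neq t_0$, yet mass $\tfrac12$ at $t_0$; a downward spike of the (merely lower semicontinuous) mass function at a single time cannot be excluded by topology alone.

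To close the gap one must control the stochastic integral \emph{uniformly in} $t$, which is what the paper does: it produces a single subsequence $(k_l)_{l\geq 1}$ and a single set of full $\mathbb{P}$-measure on which \eqref{12} holds for all $t \in [0,T]$ simultaneously (the paper phrases this as a diagonal argument over a countable dense set of times combined with continuity of the stochastic integral in $t$; the cleanest rigorous implementation is Doob's maximal inequality, which in your notation gives $\mathbb{E}\bigl[\sup_{t \leq T}\bigl|\int_0^t H^{\alpha}_l(s)\,dW^{\alpha}_s\bigr|^2\bigr] \leq 4\,\mathbb{E}\int_0^T |H^{\alpha}_l(s)|^2\,ds \to 0$, so that $\sup_{t\leq T}|\cdot| \to 0$ $\mathbb{P}$-a.s.\ along a subsequence). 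On that single full-measure set one then passes to the limit in \eqref{11} at \emph{every} $t$ simultaneously, exactly as in Remark \ref{Rem mass conserv}, obtaining $\mu_t(\mathbb{R}^d) = \mu_0(\mathbb{R}^d) = 1$ for all $t \in [0,T]$. Your concluding observation that constant mass $1$ upgrades vague continuity to weak continuity is correct, but it only becomes available after this uniform-in-$t$ step.
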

\begin{proof}
	Let $(\varphi_k)_{k \geq 1} \subseteq C^2_c(\mathbb{R}^d)$ approximate the constant function $1$ as in Remark \ref{Rem mass conserv}. Then, by Itô-isometry, for each $t \in [0,T]$, there exists a subsequence $(k^t_l)_{l \geq 1} = (k_l)_{l \geq 1}$ such that
	\begin{equation}\label{12}
	\int_0^t\int_{\mathbb{R}^d}\sigma(s,\mu_s, x)\cdot \nabla \varphi_{k_l}(x)d\mu_s(x)dW_s \underset{l \to \infty}{\longrightarrow}0 \,\, \mathbb{P}\text{-a.s.}
	\end{equation}
	Since the stochastic integral is continuous in $t$, a classical diagonal argument yields that there exists a subsequence $(k_l)_{l \geq 1}$ along which (\ref{12}) holds for all $t \in [0,T]$ on a set of full $\mathbb{P}$-measure, independent of $t$. Let $\omega' \in \Omega$ be from this set such that also $\mu_0(\omega') \in \mathcal{P}$ and (\ref{11}) holds for each $t$ and $\varphi$. Note that the set of all such $\omega'$ has full $\mathbb{P}$-measure. Then, similar to the reasoning in Remark \ref{Rem mass conserv} and by using (\ref{12}), considering (\ref{11}) for such $\omega'$ with $\varphi_{k_l}$ in place of $\varphi$ for the limit $l \longrightarrow +\infty$, we obtain
	$$\mu_t(\omega')(\mathbb{R}^d) = \mu_0(\omega')(\mathbb{R}^d), \,\, t\in [0,T]$$
	and hence the result.
\end{proof}
Note that the above proof can be adjusted to extend (\ref{11}) to each $\varphi \in C^2_b(\mathbb{R}^d)$. 
\subsubsection*{Embedding $\mathcal{SP}$ into $\ell^2$}
In comparison with the deterministic case, we still consider $\mathcal{SP}$ as a manifold-like space with tangent spaces $T_{\mu}\mathcal{SP} = L^2(\mathbb{R}^d,\mathbb{R}^d;\mu)$ as before. However, instead of embedding into $\mathbb{R}^{\infty}$ by $G$ as in the previous section, now we need a global chart 
$$H: \mathcal{SP} \to \ell^2$$
in order to handle the stochastic integral term later on. To this end, we replace the set of functions $\mathcal{G} = \{g_i, i \geq 1\}$ of the deterministic case by
\begin{equation}\label{13}
\mathcal{H} := \{h_i\}_{i \geq 1}, \, h_i := 2^{-i}\frac{g_i}{||g_i||_{C^2_b}}
\end{equation}and consider the map
$$H : \mathcal{SP} \to \ell^2,\,\, H: \mu \mapsto (\mu(h_i))_{i \geq 1}.$$
The following lemma collects useful properties of $\mathcal{H}$ and $H$, which are in the spirit of Lemma \ref{Prop G early} and \ref{Lem aux G, J}. We point out that we could have used the function class $\mathcal{H}$ instead of $\mathcal{G}$ already in Section 3, but we decided to pass from $\mathcal{G}$ to $\mathcal{H}$ at this point in order to stress the technical adjustments necessary due to the stochastic case.
\begin{lem}\label{Lem aux H}
	\begin{enumerate}
		\item [(i)] The set $\mathcal{H}$ is measure-determining. Further, a process $(\mu_t)_{t \leq T}$ as in Definition \ref{Def sol SNLFPKE} is a solution to (\ref{SNLFPKE}) if and only if (\ref{11}) holds for each $h_i \in \mathcal{H}$ in place of $\varphi$.
		\item[(ii)] $H$ is a homeomorphism between $\mathcal{SP}$ and its range $H(\mathcal{SP}) \subseteq \ell^2$, endowed with the $\ell^2$-subspace topology. In particular, $H(\mathcal{SP}) \subseteq \ell^2$ is compact.
	
	\end{enumerate}
\end{lem}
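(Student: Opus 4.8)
The plan is to follow the same template as the deterministic Lemmas \ref{Prop G early} and \ref{Lem aux G, J}, inserting the extra care forced by the stochastic integral in part (i) and by the passage from $\mathbb{R}^\infty$ to $\ell^2$ in part (ii). For part (i), observe first that each $h_i = 2^{-i} g_i / \|g_i\|_{C^2_b}$ is a nonzero scalar multiple of $g_i$, so $\mu(h_i) = c_i\, \mu(g_i)$ with $c_i \neq 0$; hence $\mu(h_i) = \nu(h_i)$ for all $i$ is equivalent to $\mu(g_i) = \nu(g_i)$ for all $i$, and the measure-determining property of $\mathcal{H}$ is inherited directly from that of $\mathcal{G}$. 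For the solution characterization, the forward implication is immediate since every $h_i \in C^2_c(\mathbb{R}^d)$. For the converse, by linearity of (\ref{11}) in $\varphi$, validity for every $h_i$ yields validity for every $g_i$; I would then fix an arbitrary $\varphi \in C^2_c(\mathbb{R}^d)$, choose by density of $\mathcal{G}$ a sequence $g_{i_k} \to \varphi$ in $\|\cdot\|_{C^2_b}$, and pass to the limit in each term of (\ref{11}).

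The main obstacle is the stochastic integral term, which has no counterpart in Lemma \ref{Prop G early}(ii). On the drift side, $\mathcal{L}_{s,\mu_s} g_{i_k} \to \mathcal{L}_{s,\mu_s}\varphi$ uniformly in $x$, and pathwise dominated convergence applies thanks to the integrability condition (\ref{2_int_SNLFPKE}), exactly as in the deterministic case. For the stochastic term I would use that $\sigma$ is bounded and $\nabla g_{i_k} \to \nabla \varphi$ uniformly: the integrands $\int_{\mathbb{R}^d} \sigma(s,\mu_s,x)\cdot \nabla g_{i_k}(x)\, d\mu_s(x)$ are then uniformly bounded and converge, so by It\^o's isometry the stochastic integrals converge in $L^2(\Omega)$, hence in probability, to $\int_0^t \int_{\mathbb{R}^d} \sigma \cdot \nabla \varphi\, d\mu_s\, dW_s$ for each fixed $t$. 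Combining the three convergences shows that (\ref{11}) holds for $\varphi$ at each fixed $t$ almost surely; invoking the $t$-continuity of both sides together with the separability of $C^2_c(\mathbb{R}^d)$ (as in the remark following Definition \ref{Def sol SNLFPKE}) upgrades this, via the usual countable-dense-set argument, to a single exceptional set valid simultaneously for all $t$ and all $\varphi$.

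For part (ii), the normalization in (\ref{13}) is precisely what makes $H$ land in $\ell^2$: since $\mu \in \mathcal{SP}$ and $\|g_i\|_\infty \leq \|g_i\|_{C^2_b}$, one has $|\mu(h_i)| \leq 2^{-i}$, whence $\|H(\mu)\|_{\ell^2}^2 \leq \sum_{i \geq 1} 4^{-i} < \infty$. To see continuity, let $\mu_n \to \mu$ vaguely; then $\mu_n(h_i) \to \mu(h_i)$ for each $i$, and since $|\mu_n(h_i) - \mu(h_i)|^2 \leq 4\cdot 4^{-i}$ is summable uniformly in $n$, dominated convergence for the series (counting measure on $\mathbb{N}$) gives $\|H(\mu_n) - H(\mu)\|_{\ell^2} \to 0$; as $\mathcal{SP}$ is metrizable, this yields continuity of $H$. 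Injectivity of $H$ follows from $\mathcal{H}$ being measure-determining, by part (i). Finally, since $\mathcal{SP}$ is compact in the vague topology and $\ell^2$ is Hausdorff, the continuous bijection $H: \mathcal{SP} \to H(\mathcal{SP})$ is automatically a homeomorphism onto its range and $H(\mathcal{SP})$ is compact as the continuous image of a compact set, which completes the proof.
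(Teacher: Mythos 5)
Your proof is correct and takes essentially the same route as the paper's: in (i) the scaling/density reduction followed by Itô's isometry (using boundedness of $\sigma$) to pass to the limit in the stochastic term, and in (ii) the bound $|\mu(h_i)|\leq 2^{-i}$ for $\ell^2$-valuedness and continuity together with compactness of $\mathcal{SP}$ (the paper verifies continuity of $H^{-1}$ directly via closedness of the range and the vague-convergence characterization, while you invoke the standard compact-to-Hausdorff fact, an equivalent shortcut). One cosmetic slip: $\mathcal{L}_{s,\mu_s}g_{i_k}\to\mathcal{L}_{s,\mu_s}\varphi$ need not hold uniformly in $x$ since $a,b$ may be unbounded, but the pathwise dominated convergence you invoke via \eqref{2_int_SNLFPKE} is exactly the argument needed, so nothing breaks.
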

\begin{proof}
	\begin{enumerate}
		\item [(i)] The first claim is obvious, since $\mathcal{G}$ is measure-determining. Concerning the second claim, note that it is clearly sufficient to have (\ref{11}) for each $\varphi \in C_c^2(\mathbb{R}^d)$ with $||\varphi||_{C^2_b} \leq 1$. Since the functions $||g_i||^{-1}_{C^2_b}g_i$ are dense in the unit ball of $C^2_c(\mathbb{R}^d)$ with respect to $||\cdot||_{C^2_b}$, it is sufficient to have (\ref{11}) for each such normalized function. Indeed, if $\varphi_k \underset{k \to \infty}{\longrightarrow}\varphi$ uniformly up to second-order partial derivatives, then by Itô-isometry
		\begin{equation*}
		\mathbb{E}\bigg[\bigg(\int_0^t\int_{\mathbb{R}^d}\sigma(s,\mu_s,\cdot)\cdot \nabla (\varphi_k-\varphi)d\mu_sdW_s\bigg)^2\bigg] = \mathbb{E}\bigg[\int_0^t\bigg(\int_{\mathbb{R}^d}\sigma(s,\mu_s,\cdot)\cdot\nabla(\varphi_k-\varphi)d\mu_s \bigg)^2ds\bigg],
		\end{equation*}which converges to $0$ as $k \longrightarrow \infty$ due to the boundedness of $\sigma$. Hence, along a subsequence $(k_l)_{l \geq 1}$, we have a.s.
		\begin{equation*}
		\int_0^t \int_{\mathbb{R}^d}\sigma(s,\mu_s,x) \cdot \nabla \varphi_{k_l}(x)d\mu_s(x)dW_s \underset{l \to \infty}{\longrightarrow} \int_0^t \int_{\mathbb{R}^d}\sigma(s,\mu_s,x)\cdot \nabla\varphi(x)d\mu_s(x)dW_s.
		\end{equation*}The a.s.-convergence of all other terms in (\ref{11}) is clear. Therefore, it is sufficient to require (\ref{11}) for a dense subset of the unit ball of $C^2_c(\mathbb{R}^d)$. Clearly, this yields at once that it is sufficient to have (\ref{11}) for each $h_i \in \mathcal{H}$. 
		\item[(ii)] By definition, $H$ maps into $\ell^2$. Since $\mathcal{H}$ is measure-determining, $H$ is one-to-one, hence bijective onto its range. If $\mu_n \underset{n \to \infty}{\longrightarrow} \mu$ vaguely in $\mathcal{SP}$, clearly $H(\mu_n)$ converges to $H(\mu)$ in the product topology. Since for any $i \geq 1$
		$$\underset{n \geq 1}{\text{sup}}|H(\mu_n)_i| \leq 2^{-i},$$
		the convergence holds in $\ell^2$ as well, which implies continuity of $H$. In particular, $H(\mathcal{SP}) \subseteq \ell^2$ is compact. Conversely, if $H(\mu_n)$ converges in $\ell^2$ to some $z = (z_i)_{i \geq 1}$, then, by closedness of $H(\mathcal{SP}) \subseteq \ell^2$, we have $z=H(\mu)$ for a unique element $\mu \in \mathcal{SP}$ and $\mu_n \underset{n \to \infty}{\longrightarrow}\mu$ vaguely. Indeed, the latter follows as in Lemma \ref{Lem aux G, J} (i). 	
	\end{enumerate} 
\end{proof}
For consistency of notation, below we denote the test function class of the manifold-like space $\mathcal{SP}$ by $\mathcal{F}C^2_b(\mathcal{H})$ to stress that the base functions $g_i$ are now replaced by $h_i \in \mathcal{H}$. However, the class of test functions remains unchanged, because the transition from $g_i$ to $h_i$ can be incorporated in the choice of $f$.
\subsubsection*{Linearization of (\ref{SNLFPKE})}
As in the deterministic case, also for the stochastic nonlinear equation (\ref{SNLFPKE}) one can consider an associated linear equation for curves in $\mathcal{P}(\mathcal{SP})$. To the best of our knowledge, such a linearization for stochastic FPK-equations has not yet been considered in the literature. Of course, the basic idea stems from the deterministic case \cite{Rckner-lin.-paper} discussed in the previous section. From Itô's formula one expects this linearized equation to be of second-order.\\
\\
Let $\big((\mu_t)_{t \leq T},W\big)$ be a subprobability solution to (\ref{SNLFPKE}) (with underlying measure $\mathbb{P}$) and choose any $F: \mu \mapsto f\big(\mu(h_1),\dots,\mu(h_n)\big)$ from $\mathcal{F}C^2_b(\mathcal{H})$. Again, we abbreviate $b(t,\mu) := b(t,\mu,\cdot)$ and similarly for $a$ and $\sigma$. By Itô's formula, we have $\mathbb{P}$-a.s.
\begin{align*}
F(\mu_t)-F(\mu_0) &= \int_0^t \big\langle\nabla^{\mathcal{SP}}F(\mu), b(s,\mu)+a(s,\mu)\nabla \big \rangle_{L^2(\mu_s)}ds \\&
+\frac{1}{2}\sum_{\alpha=1}^{d_1}\int_0^t \sum_{k,l=1}^n (\partial_{kl}f)(\mu(h_1),\dots,\mu(h_n))\bigg(\int_{\mathbb{R}^d}\sigma^{\alpha}(s,\mu)\cdot \nabla h_k d\mu\bigg)\bigg(\int_{\mathbb{R}^d}\sigma^{\alpha}(s,\mu)\cdot \nabla h_l d\mu\bigg)ds \\&+ M_t^F
,
\end{align*}with the martingale $M^F$ given as
$$M_t^F:= \sum_{\alpha=1}^{d_1}\int_0^t\bigg[ \sum_{l=1}^{n}(\partial_lf)\big(\mu(h_1),\dots,\mu(h_n)\big) \int_{\mathbb{R}^d}\sigma^{\alpha} \cdot \nabla h_l d\mu_s \bigg]dW_s^{\alpha}. $$Since $M^F_0 = 0$ $\mathbb{P}$-a.s., integrating with respect to $\mathbb{P}$ and defining the curve of measures in $\mathbb{P}(\mathcal{SP})$
$$\Gamma_t := \mathbb{P}\circ \mu_t^{-1}, \,\, t\leq T$$yields
\begin{align}\label{prelim eq}
\notag &\int_{\mathcal{SP}}F(\mu)d\Gamma_t(\mu) - \int_{\mathcal{SP}}F(\mu)d\Gamma_0(\mu)= \int_0^t\int_{\mathcal{SP}} \big\langle\nabla^{\mathcal{SP}}F(\mu), b(s,\mu)+a(s,\mu)\nabla \big \rangle_{L^2(\mu)}d\Gamma_s(\mu)ds \\& 
	+\frac{1}{2}\sum_{\alpha=1}^{d_1}\int_0^t \int_{\mathcal{SP}}\sum_{k,l=1}^n (\partial_{kl}f)\big(\mu(h_1),\dots,\mu(h_n)\big)\bigg(\int_{\mathbb{R}^d}\sigma^{\alpha}(s,\mu)\cdot \nabla h_k d\mu\bigg)\bigg(\int_{\mathbb{R}^d}\sigma^{\alpha}(s,\mu)\cdot \nabla h_l d\mu\bigg)d\Gamma_s(\mu)ds.
\end{align}As for the first-order term, which is interpreted as the pairing of the gradient $\nabla^{\mathcal{SP}}F$ with the inhomogeneous vector field $b+a\nabla$ in the tangent bundle $T\mathcal{SP}$, also the second-order term allows for a geometric interpretation: Recall that for a smooth, real function $F$ on a Riemannian manifold $M$ with tangent bundle $TM$, the Hessian $Hess(F)_p$ at $p \in M$ is a bilinear form on $T_pM$ with
\begin{equation}\label{Hess gen mf}
Hess(F)_p(\eta_p, \xi_p) = \big\langle \nabla^L_{\eta_p}\nabla F(p), \xi_p \big \rangle_{T_pM},\,\, \eta_p, \xi_p \in T_pM, 
\end{equation}where $\nabla^L: TM \times TM \to TM$ denotes the Levi-Civita-connection on $M$, the unique affine connection compatible with the metric tensor on $M$ and $\nabla$ denotes the usual gradient on $M$. Intuitively, $\nabla^L_{\eta_p}\nabla F(p)$ denotes the change of the vector field $\nabla F$ in direction $\eta_p$ at $p$. Recall that we consider $\mathcal{SP}$ as a manifold-like space with gradient $\nabla^{\mathcal{SP}}$ and that hence the reasonable notion of the Levi-Civita connection$\nabla^{L,\mathcal{SP}}$ on $\mathcal{SP}$ for $\sigma \in T_{\mu}\mathcal{SP} = L^2(\mathbb{R}^d,\mathbb{R}^d;\mu), Y \in T\mathcal{SP}$ at $\mu$ is given by
$$\nabla ^{L,\mathcal{SP}}_{\sigma}Y(\mu) = \big \langle \nabla^{\mathcal{SP}}Y(\mu), \sigma\big \rangle_{T_{\mu}\mathcal{SP}},$$whenver $\nabla^{\mathcal{SP}}Y$ is defined in $T\mathcal{SP}$. For the representation of $Hess(F)$ for a test function $F \in \mathcal{F}C^2_b(\mathcal{H})$, we need to set $Y = \nabla^{\mathcal{SP}}F$. In this case, we can indeed make sense of
$$ (\nabla^{\mathcal{SP}})^2F:= \nabla^{\mathcal{SP}}\nabla^{\mathcal{SP}}F,$$ because the gradient
$$\mu \mapsto \nabla^{\mathcal{SP}}F(\mu) = \sum_{k=1}^{n}(\partial_kf)\big(\mu(h_1),\dots,\mu(h_n)\big)\nabla h_k$$ is a linear combinations of the "$\mathcal{F}C^2_b(\mathcal{H})$-like" functions $\mu \mapsto \partial_kf\big(\mu(h_1),\dots,\mu(h_n)\big)$. The linear combination has to be understood in an $x$-wise sense with coefficient functions $\nabla h_k$, which are independent of the variable of interest $\mu$. Denoting $F_k(\mu):= (\partial_kf)\big(\mu(h_1),\dots,\mu(h_n)\big)$, we then define
\begin{equation}\label{Def nabla squared}
(\nabla^{\mathcal{SP}})^2F(\mu) (x,y) := \sum_{k=1}^n\big(\nabla^{\mathcal{SP}}F_k(\mu)\big)(y)\nabla h_k(x),\,\, (x,y) \in \mathbb{R}^d\times \mathbb{R}^d. 
\end{equation}
Consequently, we have a reasonable notion of the Levi-Civita connection on $\mathcal{SP}$ at $\mu$ for $\sigma \in T_{\mu}\mathcal{SP}$ and $\nabla^{\mathcal{SP}}F$ for $F \in \mathcal{F}C^2_b(\mathcal{H})$ as
\begin{equation}
\nabla^{L,\mathcal{SP}}_{\sigma}\nabla^{\mathcal{SP}}F(\mu)  := \big \langle (\nabla^{\mathcal{SP}})^2F(\mu), \sigma \big \rangle_{T_{\mu}\mathcal{SP}} = \sum_{k,l=1}^{n}(\partial_{kl}f)\big(\mu(h_1),\dots,\mu(h_n)\big)\nabla h_k\bigg(\int_{\mathbb{R}^d}\sigma \cdot \nabla h_l d\mu\bigg).
\end{equation}The section $(\nabla^{\mathcal{SP}})^2F$ in $T\mathcal{SP}^*\otimes T\mathcal{SP}^*$ (and hence $\nabla^{L,\mathcal{SP}}_{\sigma}\nabla^{\mathcal{SP}}F$ and $Hess(F)$ below) is independent of the particular representation of $F$ in (\ref{Def nabla squared}). Indeed, we have (c.f. Appendix A \cite{Rckner-lin.-paper}) for $$\gamma^{\sigma}_{\mu}(t):= \mu \circ (\text{Id}+t \sigma)^{-1}$$ the following pointwise (in $x \in \mathbb{R}^d$) equality for each $\mu \in \mathcal{SP}, \sigma \in L^2(\mathbb{R}^d,\mathbb{R}^d;\mu)$
\begin{align*}
\frac{d}{dt}\nabla^{\mathcal{SP}}F\big(\gamma^{\sigma}_{\mu}(t)\big) &= \sum_{k=1}^n\bigg[\frac{d}{dt}(\partial_kf)\big(\gamma^{\sigma}_{\mu}(t)(h_1),\dots,\gamma^{\sigma}_{\mu}(t)(h_n)\big)\bigg]\nabla h_k \\& = \sum_{k,l=1}^n(\partial_{kl}f)\big(\mu(h_1),\dots,\mu(h_n)\big)\big \langle \nabla h_l, \sigma \big \rangle_{L^2(\mu)} \nabla h_k \\& = 
\big \langle (\nabla^{\mathcal{SP}})^2F(\mu),\sigma \big \rangle_{L^2(\mu)}.
\end{align*}Since the gradient $\nabla^{\mathcal{SP}}F$ is independent of the particular representation of $F$ and $\sigma \in L^2(\mathbb{R}^d,\mathbb{R}^d;\mu)$ is arbitrary, also $(\nabla^{\mathcal{SP}})^2F$ is independent of the representation of $F$. \\

Considering (\ref{Hess gen mf}), we then set for $F \in \mathcal{F}C^2_b(\mathcal{H})$ and $\sigma, \tilde{\sigma} \in L^2(\mathbb{R}^d,\mathbb{R}^d;\mu)$
\begin{equation}
Hess(F)(\mu): (\sigma, \tilde{\sigma}) \mapsto \sum_{k,l=1}^{n}(\partial_{kl}f)\big(\mu(h_1),\dots,\mu(h_n)\big)\bigg(\int_{\mathbb{R}^d}\sigma \cdot \nabla h_l d\mu\bigg)\bigg(\int_{\mathbb{R}^d}\tilde{\sigma} \cdot \nabla h_k d\mu\bigg),
\end{equation}which is a (symmetric) bilinear form on $T_{\mu}\mathcal{SP}$ and rewrite (\ref{prelim eq}) as
\begin{align}\label{P-FPKE eq}
\int_{\mathcal{SP}}Fd\Gamma_t - \int_{\mathcal{SP}}Fd\Gamma_0= \int_0^t\int_{\mathcal{SP}} \big\langle\nabla^{\mathcal{SP}}F, b_s+a_s\nabla \big \rangle_{L^2} + \frac{1}{2}\sum_{\alpha=1}^{d_1} Hess(F)(\sigma_s^{\alpha},\sigma_s^{\alpha})d\Gamma_sds
\end{align}(with $b_s: (\mu,x) \mapsto b(s,\mu,x)$ and similarly for $a_s$ and $\sigma_s$). Introducing the second-order operator $\mathbf{L}^{(2)}$, acting on $F \in \mathcal{F}C^2_b(\mathcal{H})$ via
$$\mathbf{L}^{(2)}_tF(\mu) = \big\langle\nabla^{\mathcal{SP}}F, b(t,\mu)+a(t,\mu)\nabla \big \rangle_{L^2(\mu)} + \frac{1}{2}\sum_{\alpha=1}^{d_1} Hess(F)\big(\sigma^{\alpha}(t,\mu),\sigma^{\alpha}(t,\mu)\big),$$
we arrive at the distributional formulation of (\ref{P-FPKE})
$$\partial_t\Gamma_t = (\mathbf{L}_t^{(2)})^*\Gamma_t, \,\, t \leq T,$$as in the introduction.
\begin{rem}
	Equation (\ref{P-CE}) is the natural analogue to second-order FPK-equations over Euclidean spaces. Indeed, for a stochastic equation on $\mathbb{R}^d$
		\begin{equation}\label{StochEq}
		dX_t = b(t,X_t)dt+\sigma(t,X_t)dW_t,
		\end{equation}
	 by Itô's formula, the corresponding linear second-order equation for measures in distributional form is 
	$$\partial_t\mu_t = \big(\mathcal{L}^{(2)}_t\big)^*\mu_t$$
	with $$\mathcal{L}^{(2)}_tf = \nabla f\cdot b_t+\frac{1}{2}\big\langle\sigma_t,Hess(f)\sigma_t\big\rangle_{\mathbb{R}^d},$$where $Hess(f)$ denotes the usual Euclidean Hessian matrix of $f \in C^2(\mathbb{R}^d)$. In this spirit, it seems natural to consider (\ref{SNLFPKE}) as a stochastic equation with state space $\mathcal{SP}$ instead of $\mathbb{R}^d$ as for (\ref{StochEq}) and (\ref{P-CE}) as the corresponding linear Fokker-Planck-type equation on $\mathcal{SP}$.

\end{rem}
 By the above derivation, for any subprobability solution process $(\mu_t)_{t \leq T}$ to (\ref{SNLFPKE}) the curve $(\Gamma_t)_{t \leq T}$, $\Gamma_t := \mathbb{P}\circ \mu_t^{-1}$ in $\mathcal{P}(\mathcal{SP})$ solves (\ref{P-CE}) in the sense of the following definition.
\begin{dfn}
	A weakly continuous curve $(\Gamma_t)_{t \leq T} \subseteq \mathcal{P}(\mathcal{SP})$ is a \textit{solution to (\ref{SNLFPKE})}, if the integrability condition
		\begin{align} \label{2_int_SP-FPKE}
		\int_0^T \int_{\mathcal{SP}}||b(t,\mu)||_{L^1(\mathbb{R}^d,\mathbb{R}^d;\mu)}+||a(t,\mu)||_{L^1(\mathbb{R}^d,\mathbb{R}^{d^2};\mu)}+ ||\sigma(t,\mu)||^2_{L^2(\mathbb{R}^d,\mathbb{R}^{d\times d_1};\mu)}d\Gamma_t(\mu)dt < \infty
	\end{align}
	 is fulfilled and for each $F \in \mathcal{F}C^2_b(\mathcal{H})$, \ref{P-FPKE eq} holds for each $t \in [0,T]$.
\end{dfn}
\subsubsection*{Transferring (\ref{SNLFPKE}) and (\ref{P-FPKE}) to $\ell^2$}Reminiscent to the deterministic case, we use the global chart $H: \mathcal{SP}\to \ell^2$ to introduce auxiliary equations on $\ell^2$ and the space of measures on $\ell^2$, respectively, as follows. Again, we use the notation
$$A_t:= \bigg\{\mu \in \mathcal{SP}: \int_{\mathbb{R}^d}|a_{ij}(t,\mu,x)|+|b_i(t,\mu,x)|d\mu(x) < \infty\,\,\forall \,1\leq i,j \leq d\bigg\}, \quad t \in [0,T].$$
For $i,j \geq 1$, $\alpha \leq d_1$, define the measurable coefficients $B_i$ for $(t,\mu)$ such that $\mu \in A_t$, and $\Sigma^{\alpha}_i$ and $A_{ij}$ on $[0,T]\times \mathcal{SP}$ by
\begin{align*}
B_i(t,\mu) &:= \int_{\mathbb{R}^d}\mathcal{L}_{t,\mu}h_i(x)d\mu(x), \quad (t,\mu)\in [0,T]\times A_t,\\
\Sigma^{\alpha}_i(t,\mu) &:= \int_{\mathbb{R}^d}\sigma^{\alpha}(t,\mu,x)\cdot \nabla h_i(x)d\mu(x), \\ \Sigma_i(t,\mu) &:= \big( \Sigma^{\alpha}_i(t,\mu)\big)_{\alpha \leq d_1} , \\ A_{ij}(t,\mu) &:= \big \langle \Sigma_i, \Sigma_j \big \rangle_{d_1} (t,\mu),
\end{align*}
and set 
$$B := (B_i)_{i \geq 1},\, \Sigma := (\Sigma^{\alpha}_i)_{\alpha \leq d_1, i \geq 1}, \, A := (A_{ij})_{i,j \geq 1}.$$ Now, transferring to $\ell^2$, define $\bar{B}, \bar{\Sigma}$ and $\bar{A}_{ij}$ on $[0,T] \times \ell^2$ component-wise via
\begin{align*}
\bar{B}_i(t,z) := 
\begin{cases}
B_i(t,H^{-1}(z))&, z \in H(A_t) \\
0 &, \text{else}
\end{cases},
\end{align*},
\begin{align*}
\bar{\Sigma}^{\alpha}_i(t,z) := 
\begin{cases}
\Sigma_i^{\alpha}(t,H^{-1}(z))&, z \in H(\mathcal{SP}) \\
0 &, z \in \ell^2 \backslash H(\mathcal{SP})
\end{cases},
\end{align*}
$$\bar{\Sigma}_i(t,z) := \big(\bar{\Sigma}^{\alpha}_i(t,z)\big)_{\alpha \leq d_1},$$
$$\bar{A}_{ij}(t,z) := \big \langle\bar{\Sigma}_i,\bar{\Sigma}_j\big \rangle_{d_1}(t,z).$$
$\bar{B}$ and $\bar{\Sigma}^{\alpha}$ are $\ell^2$-valued, since for $z = H(\mu)$
$$|\bar{B}_i(t,z)| \leq \int_{\mathbb{R}^d}|\mathcal{L}_{t,\mu}h_i(x)|d\mu(x) \leq C2^{-i},$$
where $C = C(a,b,d)$ is a finite constant independent of $t,z$ and $i \geq 1$. A similar argument is valid for each $\bar{\Sigma}^{\alpha}$. Each $\bar{B}_i$ and $\bar{\Sigma}^{\alpha}_i$ is product-measurable with respect to the $\ell^2$-topology due to the measurability of $B$ and $\Sigma^{\alpha}$. Reminiscent to (\ref{Rinfty-CE}) in the previous section, we associate to (\ref{P-FPKE}) the FPK-equation on $\ell^2$
\begin{equation}\label{ell2-FPKE}\tag{$\ell^2$-FPK}
\partial_t \bar{\Gamma}_t = -\bar{\nabla}\cdot(\bar{B}(t,z)\bar{\Gamma}_t)+\partial^2_{ij}(\bar{A}_{ij}(t,z)\bar{\Gamma}_t),
\end{equation}which we understand in the sense of the following definition, with $\bar{\nabla}$ as in (\ref{Def nabla gradient}). Subsequently, we denote by $\mathcal{F}C^2_b(\ell^2)$ the set of all maps $\bar{F}: \ell^2 \to \mathbb{R}$ of type $\bar{F} = f \circ \pi_n$ for $n \geq 1$ and $f \in C^2_b(\mathbb{R}^n)$. Also, set
$$D^2\bar{F}_{ij} := 
\begin{cases}
(\partial^2_{ij}f) \circ \pi_n&,  i,j \leq n \\
0&,  \text{ else}.
\end{cases}$$Consequently, both summands in (\ref{14}) contain only finitely many non-trivial summands. 
\begin{dfn}\label{Def sol ell2-FPKE}
	A weakly continuous curve $(\bar{\Gamma}_t)_{t \leq T}\subseteq \mathcal{P}(\ell^2)$ is a \textit{solution to} (\ref{ell2-FPKE}), if it fulfills the integrability condition
		\begin{equation}\label{2_global_int_l2-FPKE}
		\int_0^T\int_{\ell^2}|\bar{B}_i(t,z)| + |\bar{A}_{ij}(t,z)|d\bar{\Gamma}_tdt < \infty,\quad \forall \,i,j \geq 1,
	\end{equation}
	and for any $\bar{F} \in \mathcal{F}C^2_b(\ell^2)$, $\bar{F}:= f\circ \pi_n$,
	\begin{equation}\label{14}
	\int_{\ell^2}\bar{F}(z)d\bar{\Gamma}_t(z) = \int_{\ell^2}\bar{F}(z)d\bar{\Gamma}_0(z)+\int_0^t \int_{\ell^2}\bar{\nabla} \bar{F}(z) \cdot \bar{B}(s,z)+\frac{1}{2}D^2\bar{F}(z):\bar{A}(s,z)d\bar{\Gamma}_s(z)ds.
	\end{equation}holds for each $t \leq T$. 
\end{dfn}

\subsection{Main Result: Stochastic case}
The main result of this section is the following superposition principle for solutions to (\ref{SNLFPKE}) and (\ref{P-FPKE}), which generalizes Theorem \ref{main thm det case} to stochastically perturbed equations.
\begin{theorem}\label{main thm stoch case}
Let $\sigma$ be bounded on $[0,T]\times \mathcal{SP}\times \mathbb{R}^d$.	Let $(\Gamma_t)_{t \leq T}$ be a weakly continuous solution to (\ref{P-FPKE}). Then, there exists a complete filtered probability space $(\Omega,\mathcal{F}, (\mathcal{F}_t)_{t \leq T}, \mathbb{P})$, an adapted $d_1$-dimensional Wiener process $W = (W_t)_{t \leq T}$ and a $\mathcal{SP}$-valued adapted vaguely continuous process $\mu=(\mu_t)_{t \leq T}$ such that $(\mu,W)$ solves (\ref{SNLFPKE}) and
	$$\mathbb{P}\circ \mu_t^{-1} = \Gamma_t$$
	holds for each $t \in [0,T]$.\\
	Moreover, if $\Gamma_0$ is concentrated on $\mathcal{P}$, i.e. $\Gamma_0(\mathcal{P}) = 1$, then the paths $t \mapsto \mu_t(\omega)$ are $\mathcal{P}$-valued for $\mathbb{P}$-a.e. $\omega \in \Omega$ and hence even weakly continuous.
\end{theorem}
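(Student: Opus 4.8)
The plan is to follow the three-step scheme of the proof of Theorem \ref{main thm det case}, with $H$ in place of $G$, the second-order equation (\ref{ell2-FPKE}) on $\ell^2$ in place of the first-order equation (\ref{Rinfty-CE}) on $\mathbb{R}^\infty$, and the stochastic superposition principle for martingale solutions of FPK-equations on $\ell^2$ (\cite{TrevisanPhD}) in place of the deterministic Proposition \ref{Sp-pr. Rinfty prop}. First I would transfer $(\Gamma_t)_{t\le T}$ to $\ell^2$ by setting $\bar\Gamma_t := \Gamma_t \circ H^{-1}$. Since $H$ is a homeomorphism with compact range (Lemma \ref{Lem aux H}(ii)), $(\bar\Gamma_t)_{t\le T}$ is a weakly continuous curve in $\mathcal{P}(\ell^2)$ concentrated on $H(\mathcal{SP})$. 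Under the substitution $z = H(\mu)$ every test function $\bar F = f\circ\pi_n \in \mathcal{F}C^2_b(\ell^2)$ corresponds to $F: \mu \mapsto f(\mu(h_1),\dots,\mu(h_n)) \in \mathcal{F}C^2_b(\mathcal{H})$, and comparing the definitions of $\bar B$, $\bar\Sigma$, $\bar A$ with the weak formulation (\ref{P-FPKE eq}) shows that $(\bar\Gamma_t)_{t\le T}$ solves (\ref{ell2-FPKE}) in the sense of Definition \ref{Def sol ell2-FPKE}. Its integrability condition (\ref{2_global_int_l2-FPKE}) follows from (\ref{2_int_SP-FPKE}) together with the pointwise bounds $|\bar B_i(t,H(\mu))| \le C\,2^{-i}\int_{\mathbb{R}^d}(|a|+|b|)\,d\mu$ and $|\bar A_{ij}| \le C\,2^{-i-j}$, where the latter uses boundedness of $\sigma$ and $C$ depends only on $d$, $d_1$ and $\|\sigma\|_\infty$.

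Next I would apply the stochastic superposition principle on $\ell^2$ to obtain a probability measure on $C_T\ell^2$ solving the martingale problem for the operator $\bar F \mapsto \bar\nabla\bar F \cdot \bar B + \tfrac12 D^2\bar F : \bar A$ with one-dimensional marginals $\bar\Gamma_t$. Realizing it as the law of a process $(z_t)_{t\le T}$ on a complete filtered probability space, and testing with coordinate functions $p_i$ (approximated within $\mathcal{F}C^2_b(\ell^2)$ by smooth truncations), one finds that $M^i_t := z^i_t - z^i_0 - \int_0^t \bar B_i(s,z_s)\,ds$ is a continuous local martingale with $\langle M^i, M^j\rangle_t = \int_0^t \bar A_{ij}(s,z_s)\,ds$. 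The decisive point — and the step I expect to be the main obstacle — is to pass from this martingale solution to a genuine weak solution driven by a finite-dimensional Wiener process. Here the factorization $\bar A_{ij} = \langle \bar\Sigma_i, \bar\Sigma_j\rangle_{d_1}$ is essential: although the state space $\ell^2$ is infinite-dimensional, the noise enters only through the $d_1$ columns of $\bar\Sigma$, so a martingale representation theorem — on a possibly enlarged probability space carrying an auxiliary independent Wiener process to cover directions in which $\bar\Sigma$ degenerates — provides a $d_1$-dimensional $\mathcal{F}_t$-Wiener process $W = (W^\alpha)_{\alpha\le d_1}$ with $M^i_t = \sum_{\alpha=1}^{d_1}\int_0^t \bar\Sigma^\alpha_i(s,z_s)\,dW^\alpha_s$ for every $i \ge 1$. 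Making this representation rigorous in the infinite-dimensional-state, finite-dimensional-noise setting, while preserving adaptedness and completeness of the filtration, is the technical heart of the argument.

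It then remains to transfer back to $\mathcal{SP}$. Since each marginal $\bar\Gamma_t$ is concentrated on the closed set $H(\mathcal{SP})\subseteq\ell^2$ and $(z_t)_{t\le T}$ has continuous paths, evaluating along a countable dense set of times shows that $\mathbb{P}$-a.s.\ the whole path lies in $H(\mathcal{SP})$, exactly as in Step 3 of the deterministic proof. Hence $\mu_t := H^{-1}(z_t)$ defines an adapted, vaguely continuous $\mathcal{SP}$-valued process, and $\mathbb{P}\circ\mu_t^{-1} = \Gamma_t$ follows from $\mathbb{P}\circ z_t^{-1} = \bar\Gamma_t = \Gamma_t\circ H^{-1}$. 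Writing the componentwise equation for $z^i_t = \mu_t(h_i)$ and unfolding the definitions of $\bar B_i$ and $\bar\Sigma^\alpha_i$ recovers precisely (\ref{11}) with $\varphi = h_i$; by Lemma \ref{Lem aux H}(i) this is enough to conclude that $(\mu,W)$ is a subprobability solution to (\ref{SNLFPKE}). The global integrability (\ref{2_int_SNLFPKE}) holds $\mathbb{P}$-a.s., since its $\mathbb{P}$-expectation equals the finite left-hand side of (\ref{2_int_SP-FPKE}), so the integrand is $\mathbb{P}\otimes dt$-integrable and thus $\mathbb{P}$-a.s.\ finite.

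Finally, the additional assertion is immediate: if $\Gamma_0(\mathcal{P}) = 1$, then $\mathbb{P}(\mu_0\in\mathcal{P}) = \Gamma_0(\mathcal{P}) = 1$, and Lemma \ref{Lem consv of mass stochastic} upgrades the paths of $\mu$ to being $\mathcal{P}$-valued, hence weakly continuous, $\mathbb{P}$-a.s.
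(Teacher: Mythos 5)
Your proposal is correct and follows essentially the same three-step route as the paper: transfer via $H$ to (\ref{ell2-FPKE}), apply Trevisan's superposition principle to obtain a solution of the martingale problem with the right marginals (the paper routes this through $\mathbb{R}^{\infty}$ and pulls back using closedness of $H(\mathcal{SP})$), and then represent the martingales $M_i$ via a $d_1$-dimensional Wiener process using the factorization $\bar{A}_{ij}=\langle\bar{\Sigma}_i,\bar{\Sigma}_j\rangle_{d_1}$ on an enlarged filtered space, before transferring back by $H^{-1}$ and invoking Lemma \ref{Lem aux H}(i) and Lemma \ref{Lem consv of mass stochastic}. The representation step you correctly single out as the technical heart is exactly what the paper resolves by citing Ondreját's infinite-dimensional representation theorem (Proposition \ref{Prop Step3 Ondrejat appl} together with Remark \ref{Rem extend prob space}).
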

As in the proof of \ref{main thm det case}, we proceed in three steps. Since parts of the proof are technically more involved than in the deterministic case, we first present the ingredients of each step and afterwards state the proof of Theorem \ref{main thm stoch case} as a corollary.\\
\\
\textbf{Step 1: From (\ref{P-FPKE}) to (\ref{ell2-FPKE}):} 
\begin{lem}\label{Lem of Step1 stochCase}
	For any solution $(\Gamma_t)_{t \leq T}$ to (\ref{P-FPKE}), the curve $\bar{\Gamma}_t = \Gamma_t \circ H^{-1}$ is a solution to (\ref{ell2-FPKE}).
\end{lem}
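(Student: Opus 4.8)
The plan is to verify that $\bar\Gamma_t := \Gamma_t \circ H^{-1}$ satisfies all three requirements of Definition \ref{Def sol ell2-FPKE}: weak continuity, the integrability condition \eqref{2_global_int_l2-FPKE}, and the defining identity \eqref{14} for every $\bar F \in \mathcal{F}C^2_b(\ell^2)$. The guiding principle is that $H$ is a homeomorphism by Lemma \ref{Lem aux H}(ii), so pushing forward along $H$ is a clean change-of-variables operation, and the coefficients $\bar B, \bar A, \bar\Sigma$ on $\ell^2$ were \emph{defined} precisely as the $H$-transports of $B, A, \Sigma$. Thus the proof is essentially a bookkeeping verification that the test-function identity \eqref{P-FPKE eq} on $\mathcal{SP}$ transforms term-by-term into \eqref{14} on $\ell^2$ under this pushforward. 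The weak continuity of $\bar\Gamma_t$ is immediate: since $H$ is continuous and $t \mapsto \Gamma_t$ is weakly continuous, the pushforward $t \mapsto \Gamma_t \circ H^{-1}$ is weakly continuous on $\mathcal{P}(\ell^2)$.

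\textbf{For the integrability condition,} I would use the change-of-variables formula to rewrite $\int_{\ell^2} |\bar B_i(t,z)| \, d\bar\Gamma_t(z) = \int_{\mathcal{SP}} |\bar B_i(t, H(\mu))| \, d\Gamma_t(\mu)$, and observe that on the range $H(\mathcal{SP})$ the definition gives $\bar B_i(t, H(\mu)) = B_i(t,\mu)$ for $\mu \in A_t$ (and $0$ otherwise), with $|B_i(t,\mu)| \leq \int_{\mathbb{R}^d} |\mathcal{L}_{t,\mu}h_i|\, d\mu$. Since each $h_i = 2^{-i} g_i / \|g_i\|_{C^2_b}$ has $C^2_b$-norm controlled by $2^{-i}$, one bounds $|\bar B_i|$ and $|\bar A_{ij}|$ by constant multiples of $\int_{\mathbb{R}^d}(|b| + |a|)\,d\mu$ and $\int_{\mathbb{R}^d}|\sigma|^2 d\mu$ respectively; the finiteness assumption \eqref{2_int_SP-FPKE} for $\Gamma$ then yields \eqref{2_global_int_l2-FPKE}. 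The key elementary point is that $\mu \in A_t$ for $\Gamma_t$-a.e. $\mu$ and $dt$-a.e. $t$, which \eqref{2_int_SP-FPKE} guarantees, so the coefficients agree with their nontrivial branch wherever it matters.

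\textbf{For the main identity,} I would start from a test function $\bar F = f \circ \pi_n \in \mathcal{F}C^2_b(\ell^2)$ and set $F := \bar F \circ H \in \mathcal{F}C^2_b(\mathcal{H})$, i.e. $F(\mu) = f(\mu(h_1), \dots, \mu(h_n))$. Applying the change of variables to each integral in \eqref{14} converts it into an integral over $\mathcal{SP}$ against $\Gamma$, and the task reduces to matching the integrands. The first-order term transforms directly, since $\bar\nabla \bar F(z) \cdot \bar B(s,z)$ evaluated at $z = H(\mu)$ equals $\sum_{k=1}^n (\partial_k f)(\mu(h_1),\dots,\mu(h_n)) B_k(s,\mu)$, which by definition of $B_k$ is exactly $\langle \nabla^{\mathcal{SP}}F, b_s + a_s\nabla \rangle_{L^2(\mu)}$. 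For the second-order term, I would expand $\frac{1}{2} D^2\bar F(z) : \bar A(s,z)$ at $z = H(\mu)$ as $\frac{1}{2}\sum_{k,l \leq n}(\partial^2_{kl}f) \bar A_{kl}$, and use $\bar A_{kl}(s, H(\mu)) = \langle \Sigma_k, \Sigma_l\rangle_{d_1} = \sum_{\alpha=1}^{d_1}\big(\int \sigma^\alpha \cdot \nabla h_k \, d\mu\big)\big(\int \sigma^\alpha \cdot \nabla h_l \, d\mu\big)$, which matches $\frac{1}{2}\sum_\alpha Hess(F)(\sigma^\alpha_s, \sigma^\alpha_s)$ by the very definition of $Hess$. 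The two sides of \eqref{14} thus coincide with the two sides of \eqref{P-FPKE eq}, proving the claim.

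\textbf{The main obstacle} I anticipate is purely notational rather than conceptual: one must be careful that the coefficients on $\ell^2$ are defined piecewise (with a zero branch off $H(\mathcal{SP})$ or off $H(A_t)$), so every change-of-variables step should be accompanied by the remark that $\bar\Gamma_t$ is concentrated on $H(\mathcal{SP})$ and that $\mu \in A_t$ holds $\Gamma_t$-a.e. for $dt$-a.e. $t$; this ensures the zero branches never contribute and the identity $\bar B_i \circ H = B_i$ may be used freely inside the integrals. Once this concentration remark is in place, the term-by-term matching is routine, and the only genuine content is the correspondence between $Hess$, $\nabla^{\mathcal{SP}}$ on $\mathcal{SP}$ and the Euclidean $D^2$, $\bar\nabla$ on $\ell^2$, which has already been established in the derivation of \eqref{P-FPKE eq} preceding the statement.
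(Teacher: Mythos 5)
Your proposal is correct and follows essentially the same route as the paper's proof: push $\Gamma_t$ forward under the homeomorphism $H$, note weak continuity and the integrability condition \eqref{2_global_int_l2-FPKE} from \eqref{2_int_SP-FPKE}, and then match the integrands of \eqref{14} term-by-term with those of \eqref{P-FPKE eq} via the change of variables $z = H(\mu)$, using that $\bar{B}$, $\bar{A}$ were defined as $H$-transports of $B$, $A$. Your explicit remark that $\bar{\Gamma}_s$ is concentrated on $H(\mathcal{SP})$ and that $\mu \in A_s$ holds $\Gamma_s$-a.e.\ for a.e.\ $s$ (so the zero branches of the piecewise-defined coefficients never contribute) is left implicit in the paper, but is exactly the right justification.
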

\begin{proof}
Clearly, $t \mapsto \bar{\Gamma}_t$ is a weakly continuous curve in $\mathcal{P}(\ell^2)$ due to the continuity of $H: \mathcal{SP} \to \ell^2$. \eqref{2_global_int_l2-FPKE} holds, since $t \mapsto \Gamma_t$ fulfills \eqref{2_int_SP-FPKE} and since $\sigma$ is bounded. Moreover, we have for $s,t \leq T$, $\bar{F} = f \circ \pi_n \in \mathcal{F}C^2_b(\ell^2)$ and $F: \mu \mapsto f\big(\mu(h_1),\dots,\mu(h_n)\big)$
\begin{align*}
& \int_{\ell^2}\bar{\nabla} \bar{F}(z) \cdot \bar{B}(s,z)+\frac{1}{2}D^2\bar{F}(z):\bar{A}(s,z)d\bar{\Gamma}_s(z) \\&=  \int_{\mathcal{SP}}\sum_{k=1}^n(\partial_kf)\big(\mu(h_1),\dots,\mu(h_n)\big)B_k(s,\mu) + \sum_{\alpha=1}^{d_1} \frac{1}{2}\sum_{k,l=1}^n(\partial_{kl}f)\big(\mu(h_1),\dots,\mu(h_n)\big)\Sigma_k^{\alpha}(s,\mu)\Sigma_l^{\alpha}(s,\mu)d\Gamma_s(\mu) \\&=
\int_{\mathcal{SP}}\big \langle \nabla^{\mathcal{SP}}F(\mu),b(s,\mu)+a(s,\mu)\nabla \big \rangle_{L^2(\mu)}+ \frac{1}{2}\sum_{\alpha=1}^{d_1}Hess(F)\big(\sigma^{\alpha}(s,\mu), \sigma^{\alpha}(s,\mu)\big)d \Gamma_s(\mu)
\end{align*}and likewise
\begin{equation*}
\int_{\ell^2}\bar{F}(z)d\bar{\Gamma}_t = \int_{\mathcal{SP}}F(\mu)d\Gamma_t.
\end{equation*}Comparing with (\ref{P-FPKE eq}), the statement follows.
\end{proof}
	
\textbf{Step 2: From (\ref{ell2-FPKE}) to the martingale problem ($\ell^2$-MGP):} We introduce a martingale problem on $\ell^2$, which is related to (\ref{ell2-FPKE}) in the sense of Remark \ref{Rem easy connect stoch ell2} below and is, roughly speaking, the stochastic analogue to (\ref{Rinfty-ODE}) from the previous section. Recall the notation $e_t$ for the projection $e_t: C_T\ell^2 \to \ell^2$, $e_t: \gamma \mapsto \gamma_t$ for $t \leq T$.
\begin{dfn}\label{Def ell2-MGP sol}
	A measure $\bar{Q} \in \mathcal{P}(C_T\ell^2)$ is a \textit{solution to the $\ell^2$-martingale problem ($\ell^2$-MGP)}, provided
		\begin{equation}
		\int_{C_T\ell^2}\int_0^T|\bar{B}_i(t,e_t)|+|\bar{A}_{ij}(t,e_t)|dtd\bar{Q} < \infty,\quad i,j \geq 1,
	\end{equation}
	and
	\begin{equation}\label{15}
	\bar{F}\circ e_t - \int_0^t \bar{\nabla} \bar{F} \circ e_s \cdot \bar{B}(s,e_s)+\frac{1}{2}D^2\bar{F}\circ e_s : \bar{A}(s,e_s)ds
	\end{equation}is a $\bar{Q}$-martingale on $C_T\ell^2$ with respect to the natural filtration on $C_T\ell^2$ for any $\bar{F} \in \mathcal{F}C^2_b(\ell^2)$.
\end{dfn}
\begin{rem}\label{Rem easy connect stoch ell2}
	By construction, any such solution $\bar{Q}$ induces a weakly continuous solution $(\bar{\Gamma}_t)_{t \leq T}$ to (\ref{ell2-FPKE}) via $\bar{\Gamma}_t := \bar{Q} \circ e_t^{-1}$. Indeed, this is readily seen by integrating (\ref{15}) with respect to $\bar{Q}$ and Fubini's theorem.
\end{rem}
In view of Proposition \ref{Prop SPprincip ell2} below, we extend the coefficients $\bar{B}_i, \bar{\Sigma}^{\alpha}_i$ (and hence also $\bar{A}_{ij}$) to $\mathbb{R}^{\infty}$ via
$$\bar{B}_i := 0 =: \bar{\Sigma}^{\alpha}_i \text{ on }[0,T]\times \mathbb{R}^{\infty} \backslash \ell^2.$$
We still use the notation $\bar{B}$, $\bar{\Sigma}^{\alpha}$ and $\bar{A}$ and note that they are $\mathcal{B}([0,T])\otimes \mathcal{B}(\mathbb{R}^{\infty})/\mathcal{B}(\mathbb{R}^{\infty})$-measurable due to Remark \ref{Rem meas P SP} below. Due to the same remark, we may regard any solution $(\bar{\Gamma}_t)_{t \leq T}$ to (\ref{ell2-FPKE}) as a solution to a FPK-equation on $\mathbb{R}^{\infty}$ by considering (\ref{ell2-FPKE}) with the extended coefficients and test functions $\bar{F} \in \mathcal{F}C^2_b(\ell^2)$ extended to $\mathbb{R}^{\infty}$ by considering $\pi_n$ on $\mathbb{R}^{\infty}$ instead of $\ell^2$. Similarly, the formulation of the martingale problem ($\ell^2$-MGP) as in Definition \ref{Def ell2-MGP sol} extends to $\mathbb{R}^{\infty}$ in the sense that a measure $\bar{Q} \in \mathcal{P}(C_T\mathbb{R}^{\infty})$ is understood as a solution, provided the process (\ref{15}) is a $\bar{Q}$-martingale on $C_T\mathbb{R}^{\infty}$ with respect to the natural filtration for each $\bar{F} = f \circ \pi_n: \mathbb{R}^{\infty} \to \mathbb{R}$ as above.
\begin{rem}\label{Rem meas P SP}
	We recall that $\ell^2 \in \mathcal{B}(\mathbb{R}^{\infty})$ and $\mathcal{B}(\ell^2) = \mathcal{B}(\mathbb{R}^{\infty})_{\upharpoonright \ell^2}$. In particular, any probability measure $\bar{\Gamma} \in \mathcal{P}(\ell^2)$ uniquely extends to a Borel probability measure on $\mathbb{R}^{\infty}$ via $\bar{\Gamma}(A) := \bar{\Gamma}(A \cap \ell^2)$, $A \in \mathcal{B}(\mathbb{R}^{\infty})$.
\end{rem}
 We shall need the following superposition principle from  \cite{TrevisanPhD}, which lifts a solution to a FPK-equation on $\mathbb{R}^{\infty}$ to a solution of the associated martingale problem. Note that in \cite{TrevisanPhD}, the author assumes an integrability condition of order $p>1$ instead of $p=1$ as in \eqref{2_global_int_l2-FPKE} in order to essentially reduce the proof to the corresponding finite-dimensional result, see \cite[Thm.2.14]{TrevisanPhD}, which requires such a higher order integrability. However, since the latter result was extended to the case of an $L^1$-integrability condition by the same author \cite[Thm.2.5]{trevisan2016}, it is easy to see that also the infinite-dimensional result \cite[Thm.7.1]{TrevisanPhD} holds for solutions with $L^1$-integrability as in \eqref{2_global_int_l2-FPKE}.

\begin{prop}\label{Prop SPprincip ell2}
	[Superposition principle on $\mathbb{R}^{\infty}$, Thm.7.1. \cite{TrevisanPhD} For any weakly continuous solution $(\bar{\Gamma}_t)_{t \leq T} \subseteq \mathcal{P}(\mathbb{R}^{\infty})$ to the $\mathbb{R}^{\infty}$-extended version of (\ref{ell2-FPKE}), there exists  $\bar{Q} \in \mathcal{P}(C_T\mathbb{R}^{\infty})$, which solves the $\mathbb{R}^{\infty}$-extended version of ($\ell^2$-MGP) such that $\bar{Q}\circ e_t^{-1} = \bar{\Gamma}_t$ for each $t \in [0,T]$.
\end{prop}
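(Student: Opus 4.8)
The proposition is a restatement of \cite[Thm.~7.1]{TrevisanPhD} under the weaker integrability \eqref{2_global_int_l2-FPKE}, so the plan is to re-run that proof and confirm that the only place the hypothesis $p>1$ was used can now be supplied by an $L^1$-result. First I would recall the finite-dimensional reduction: projecting the weakly continuous solution via $\pi_n$, set $\bar{\Gamma}_t^n := \bar{\Gamma}_t \circ \pi_n^{-1} \in \mathcal{P}(\mathbb{R}^n)$; testing the $\mathbb{R}^{\infty}$-extended \eqref{ell2-FPKE} against $f \circ \pi_n$ and disintegrating $\bar{\Gamma}_s$ along $\pi_n$ shows that $(\bar{\Gamma}_t^n)_{t \leq T}$ solves a finite-dimensional FPK-equation on $\mathbb{R}^n$ with coefficients $\tilde{B}_i^n(s,\cdot) := \mathbb{E}_{\bar{\Gamma}_s}[\bar{B}_i \mid \pi_n]$ and $\tilde{A}_{ij}^n(s,\cdot) := \mathbb{E}_{\bar{\Gamma}_s}[\bar{A}_{ij} \mid \pi_n]$, $i,j \leq n$, the latter still positive semidefinite as a conditional average of Gram matrices. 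By Jensen and Fubini, the $L^1$-bound \eqref{2_global_int_l2-FPKE} transfers to $\tilde{B}^n, \tilde{A}^n$.

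The finite-dimensional superposition principle then furnishes, for each $n$, a measure $\bar{Q}^n \in \mathcal{P}(C_T\mathbb{R}^n)$ solving the martingale problem for $\tilde{B}^n, \tilde{A}^n$ with time-marginals $\bar{\Gamma}_t^n$. This is exactly the step where \cite{TrevisanPhD} calls on its $L^p$-result \cite[Thm.~2.14]{TrevisanPhD} with $p>1$; in its place I would use \cite[Thm.~2.5]{trevisan2016}, which gives the identical conclusion under $L^1$-integrability. Everything after this point in the proof of \cite[Thm.~7.1]{TrevisanPhD} — the tightness of the $\bar{Q}^n$ and the passage to a limit $\bar{Q} \in \mathcal{P}(C_T\mathbb{R}^{\infty})$ with $\bar{Q} \circ e_t^{-1} = \bar{\Gamma}_t$ — I would carry over unchanged. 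Tightness is in fact comfortable here: the componentwise bounds $|\bar{B}_i| \leq C 2^{-i}$ and $|\bar{\Sigma}_i^{\alpha}| \leq C 2^{-i}$ recorded before Definition \ref{Def sol ell2-FPKE} make every coordinate a process with uniformly bounded drift and diffusion.

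The step I would scrutinize most carefully — and the main obstacle — is the identification of the limit $\bar{Q}$ as a solution of the martingale problem with the \emph{true} coefficients $\bar{B}, \bar{A}$ rather than the conditional ones, since this is where an integrability threshold could in principle matter. The mechanism is martingale convergence along the increasing filtration $\sigma(\pi_n) \uparrow \mathcal{B}(\mathbb{R}^{\infty})$: for a.e.\ $s$ one has $\mathbb{E}_{\bar{\Gamma}_s}[\bar{B}_i \mid \pi_n] \to \bar{B}_i$ in $L^1(\bar{\Gamma}_s)$, and likewise for the bounded $\bar{A}_{ij}$, so that — using $\bar{Q} \circ e_s^{-1} = \bar{\Gamma}_s$, dominated convergence in $s$ with dominant $2|\bar{B}_i|$, and Fubini — the conditional compensators converge in $L^1(\bar{Q} \otimes ds)$ to the compensator built from $\bar{B}, \bar{A}$. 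Since an $L^1$-limit of $\bar{Q}$-martingales is again a $\bar{Q}$-martingale, the process \eqref{15} is a $\bar{Q}$-martingale for every $\bar{F} \in \mathcal{F}C^2_b(\mathbb{R}^{\infty})$. As this convergence uses nothing beyond \eqref{2_global_int_l2-FPKE} and the boundedness of $\bar{A}$, the $p>1$ hypothesis is genuinely dispensable and the proposition follows as stated.
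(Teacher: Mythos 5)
Your proposal is correct and takes essentially the same route as the paper: the paper establishes this proposition simply by invoking \cite[Thm.\ 7.1]{TrevisanPhD} and remarking that the only point where the $p>1$ integrability is used is the reduction to the finite-dimensional superposition principle, which can be replaced by the $L^1$-version \cite[Thm.\ 2.5]{trevisan2016} — exactly the substitution you make. Your more detailed walkthrough (projection via $\pi_n$, conditional coefficients, tightness, and identification of the limit by martingale convergence along $\sigma(\pi_n)\uparrow\mathcal{B}(\mathbb{R}^{\infty})$) is a faithful reconstruction of the argument the paper leaves implicit.
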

Moreover, we have the following consequence for the solutions we are interested in. Note that paths $t \mapsto z_t \in H(\mathcal{SP})$ are continuous with respect to the product topology if and only if they are $\ell^2$-continuous. Hence, we may use the notation $C_TH(\mathcal{SP})$ unambiguously and consider it as a subset of either $C_T\mathbb{R}^{\infty}$ or $C_t\ell^2$. Since $H(\mathcal{SP}) \subseteq \ell^2$ is closed even with respect to the product topology, $C_TH(\mathcal{SP})$ belongs to $\mathcal{B}(C_T\ell^2)$ and $\mathcal{B}(C_T\mathbb{R}^{\infty})$. 
\begin{lem}
	If in the situation of the previous proposition each $\bar{\Gamma}_t$ is concentrated on the Borel set $H(\mathcal{SP}) \subseteq \mathbb{R}^{\infty}$, then $\bar{Q}$ is concentrated on continuous curves in $H(\mathcal{SP})$. In particular, in this case $\bar{Q}$ may be regarded as an element of $\mathcal{P}(C_T\ell^2)$ and a solution to the martingale problem ($\ell^2$-MGP) as in Definition \ref{Def ell2-MGP sol}.
\end{lem}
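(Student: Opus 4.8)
The plan is to argue in two stages: first to show that $\bar{Q}$ is concentrated on the path space $C_TH(\mathcal{SP})$ by a density-in-time argument exploiting the closedness of $H(\mathcal{SP})$, and then to deduce that $\bar{Q}$, now viewed as a measure on $C_T\ell^2$, solves ($\ell^2$-MGP), using that the $\mathbb{R}^{\infty}$-extended coefficients coincide with their non-extended versions on $H(\mathcal{SP})$.

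For the first stage I would fix a countable dense subset $D \subseteq [0,T]$ with $0,T \in D$. From $\bar{Q} \circ e_t^{-1} = \bar{\Gamma}_t$ and the hypothesis $\bar{\Gamma}_t(H(\mathcal{SP})) = 1$ one obtains $\bar{Q}(e_t \in H(\mathcal{SP})) = 1$ for every $t \in D$, and by countability the set $\bigcap_{t \in D} e_t^{-1}(H(\mathcal{SP}))$ carries full $\bar{Q}$-mass. The crucial point is then that every $z \in C_T\mathbb{R}^{\infty}$ lying in $H(\mathcal{SP})$ at all times of $D$ in fact lies in $H(\mathcal{SP})$ for each $t \in [0,T]$: since $z$ is continuous in the product topology and $H(\mathcal{SP})$ is closed therein (as observed just before the present statement), approximating an arbitrary $t$ by $s \in D$ and passing to the limit yields $z_t \in H(\mathcal{SP})$. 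Hence $\bigcap_{t \in D} e_t^{-1}(H(\mathcal{SP})) \subseteq C_TH(\mathcal{SP})$, and since $C_TH(\mathcal{SP}) \in \mathcal{B}(C_T\mathbb{R}^{\infty})$ this gives $\bar{Q}(C_TH(\mathcal{SP})) = 1$.

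For the second stage I would use that the product topology and the $\ell^2$-topology agree on $H(\mathcal{SP})$, so that $C_TH(\mathcal{SP})$ is the same set regarded inside $C_T\mathbb{R}^{\infty}$ or inside $C_T\ell^2$, with matching trace $\sigma$-algebras; being concentrated there, $\bar{Q}$ defines an element of $\mathcal{P}(C_T\ell^2)$. The integrability condition of Definition \ref{Def ell2-MGP sol} then transfers at once from its $\mathbb{R}^{\infty}$-counterpart, since the relevant integrands coincide $\bar{Q}$-a.e. Finally, for each $\bar{F} = f \circ \pi_n \in \mathcal{F}C^2_b(\ell^2)$ the process \eqref{15} is a $\bar{Q}$-martingale on $C_T\mathbb{R}^{\infty}$ by Proposition \ref{Prop SPprincip ell2}; because $\bar{B}$ and $\bar{A}$ agree with their non-extended versions on $H(\mathcal{SP})$ and the natural filtrations $\sigma(e_s, s \leq t)$ on $C_T\mathbb{R}^{\infty}$ and $C_T\ell^2$ restrict to the same trace $\sigma$-algebra on the full-measure set $C_TH(\mathcal{SP})$, the defining martingale identity carries over verbatim to $C_T\ell^2$. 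Thus $\bar{Q}$ solves ($\ell^2$-MGP) in the sense of Definition \ref{Def ell2-MGP sol}.

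I expect the only genuinely substantive step to be the passage from ``$z_t \in H(\mathcal{SP})$ on a dense set of times'' to ``for all times,'' which rests precisely on the closedness of $H(\mathcal{SP})$ in the product topology (equivalently in $\ell^2$), recorded via Lemma \ref{Lem aux H}; the remaining steps amount to bookkeeping about marginals, matching $\sigma$-algebras, and the fact that the coefficients are unchanged on the set where $\bar{Q}$ is concentrated.
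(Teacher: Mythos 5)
Your proof is correct and follows essentially the same route as the paper: the paper likewise obtains $\bar{Q}(C_TH(\mathcal{SP}))=1$ by writing $C_TH(\mathcal{SP})$ as the countable intersection $\bigcap_{q\in[0,T]\cap\mathbb{Q}}\{e_q\in H(\mathcal{SP})\}$ (using closedness of $H(\mathcal{SP})$ in the product topology), and then restricts $\bar{Q}$ to $\mathcal{B}(C_T\ell^2)$ via the trace $\sigma$-algebra inclusion $\mathcal{B}(C_T\ell^2)_{\upharpoonright C_TH(\mathcal{SP})}\subseteq\mathcal{B}(C_T\mathbb{R}^{\infty})$. The only difference is that the paper dismisses the final verification of Definition \ref{Def ell2-MGP sol} as clear, whereas you spell out the bookkeeping (coincidence of the extended and non-extended coefficients on $H(\mathcal{SP})$, matching trace filtrations, and transfer of the martingale identity), which is a welcome elaboration rather than a deviation.
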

\begin{proof}
	The closedness of $H(\mathcal{SP}) \subseteq \mathbb{R}^{\infty}$ yields
	$$\bar{Q}(C_TH(\mathcal{SP})) = \bar{Q}\bigg(\underset{q \in [0,T]\cap \mathbb{Q}}{\bigcap}\{e_q \in H(\mathcal{SP})\}\bigg) = 1,$$
	due to $\bar{Q}\circ e_t^{-1} = \bar{\Gamma}_t$ for each $ t \leq T$. By the observation above this lemma, it follows
	$$\mathcal{B}(C_T\ell^2)_{\upharpoonright C_TH(\mathcal{SP})} \subseteq \mathcal{B}(C_T\mathbb{R}^{\infty})_{\upharpoonright C_TH(\mathcal{SP})} \subseteq \mathcal{B}(C_T\mathbb{R}^{\infty})$$and we can therefore consider $\bar{Q}$ as a probability measure on $\mathcal{B}(C_T\ell^2)$ via 
	$$\bar{Q}(A) := \bar{Q}\big(A\cap C_TH(\mathcal{SP})\big),\,\, A \in \mathcal{B}(C_T\ell^2) $$with mass on $C_TH(\mathcal{SP})$. It is clear that this measure fulfills Definition \ref{Def ell2-MGP sol}.
\end{proof}
Hence, subsequently we may regard to $\bar{Q}$ as in Proposition \ref{Prop SPprincip ell2} as a solution to ($\ell^2$-MGP) on either $\mathbb{R}^{\infty}$ or $\ell^2$ without differing the notation. Recall the notation $p_i: \ell^2 \to \mathbb{R}$, $p_i(z) = z_i.$ 
\begin{lem}\label{Lem martingale and covar}
	Let $\bar{Q}$ be a solution to the martingale problem ($\ell^2$-MGP) on $\ell^2$. Then, for any $i \geq 1$, the process
	\begin{equation}\label{16}
	M_i(t) := p_i \circ e_t - p_i \circ e_0 - \int_0^t \bar{B}_i(s,e_s)ds
	\end{equation}
	is a real-valued, continuous $\bar{Q}$-martingale on $C_T\ell^2$ with respect to the canonical filtration. The covariation $\langle \langle M_i, M_j \rangle \rangle $ of $M_i$ and $M_j$ is $\bar{Q}$-a.s. given by
	
	\begin{equation}\label{17}
	\langle\langle M_i, M_j \rangle\rangle_t = \int_0^t \bar{A}_{ij}(s,e_s)ds, \,\, t\in [0,T].
	\end{equation}
\end{lem}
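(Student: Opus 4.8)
The plan is to feed the coordinate maps $p_i$ and the products $p_ip_j$ into the martingale problem (\ref{15}) and to read off both assertions directly. The only genuine difficulty is that these functions are unbounded on $\ell^2$ and hence do not lie in $\mathcal{F}C^2_b(\ell^2)$, so they must be truncated before (\ref{15}) applies. Here I would use that the $\bar{Q}$ of interest (obtained from Proposition \ref{Prop SPprincip ell2} via the preceding lemma) is concentrated on $C_TH(\mathcal{SP})$, so that $|p_i\circ e_t|\le 2^{-i}$ holds $\bar{Q}$-a.s.\ for all $t$; this makes every truncation exact on the support of $\bar{Q}$ and renders the resulting processes bounded in the coordinate variable. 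Without this concentration one would instead localize along $\tau_R:=\inf\{t\le T:\,|p_i\circ e_t|>R\}$ and upgrade the localized processes using the integrability in Definition \ref{Def ell2-MGP sol}.

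For the martingale property, I would fix $\chi\in C^2_b(\mathbb{R})$ with $\chi(x)=x$ on $[-2^{-i},2^{-i}]$ and set $\bar{F}:=\chi\circ p_i\in\mathcal{F}C^2_b(\ell^2)$. On the support of $\bar{Q}$ one has $\bar{F}\circ e_s=p_i\circ e_s$, $\bar{\nabla}\bar{F}\circ e_s\cdot\bar{B}(s,e_s)=\bar{B}_i(s,e_s)$ and $D^2\bar{F}\circ e_s:\bar{A}(s,e_s)=0$, so the process (\ref{15}) for this $\bar{F}$ equals $p_i\circ e_t-\int_0^t\bar{B}_i(s,e_s)ds$. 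Since this differs from $M_i$ only by the $\mathcal{F}_0$-measurable, integrable term $p_i\circ e_0$, it follows that $M_i$ is a $\bar{Q}$-martingale; integrability is guaranteed by the boundedness of $p_i\circ e_t$ and the $\bar{Q}$-integrability of $\int_0^T|\bar{B}_i(s,e_s)|ds$, and continuity follows from $\ell^2$-continuity of the paths and continuity of $t\mapsto\int_0^t\bar{B}_i(s,e_s)ds$.

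For the covariation I would apply the martingale problem to a truncation of $\bar{F}=p_ip_j$. A direct computation gives $\bar{\nabla}\bar{F}\cdot\bar{B}=p_j\bar{B}_i+p_i\bar{B}_j$ and, using symmetry of $\bar{A}$, $\tfrac12 D^2\bar{F}:\bar{A}=\bar{A}_{ij}$, so that
$$Z_t:=(p_i\circ e_t)(p_j\circ e_t)-(p_i\circ e_0)(p_j\circ e_0)-\int_0^t\big[(p_j\circ e_s)\bar{B}_i(s,e_s)+(p_i\circ e_s)\bar{B}_j(s,e_s)+\bar{A}_{ij}(s,e_s)\big]ds$$
is a $\bar{Q}$-martingale. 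On the other hand, the first part of the proof exhibits $X^i_t:=p_i\circ e_t=X^i_0+M_i(t)+\int_0^t\bar{B}_i(s,e_s)ds$ as a continuous semimartingale with martingale part $M_i$, and integration by parts for the product $X^iX^j$ produces a second semimartingale decomposition of $(p_i\circ e_t)(p_j\circ e_t)$ whose bounded-variation part is $\int_0^t X^i_s\bar{B}_j\,ds+\int_0^t X^j_s\bar{B}_i\,ds+\langle\langle M_i,M_j\rangle\rangle_t$.

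Comparing the two decompositions, the difference $\langle\langle M_i,M_j\rangle\rangle_t-\int_0^t\bar{A}_{ij}(s,e_s)ds$ is a difference of continuous (local) martingales, hence itself a continuous local martingale; but it is also continuous of finite variation and vanishes at $t=0$, so it is identically $0$ $\bar{Q}$-a.s., which is exactly (\ref{17}). I expect the main obstacle to be the bookkeeping around the unbounded test functions, namely verifying that truncation leaves the identities unaffected on the support of $\bar{Q}$ (or, absent the concentration hypothesis, controlling the localization and promoting the localized objects to genuine martingales via Definition \ref{Def ell2-MGP sol}), together with the standard but essential fact that a continuous finite-variation local martingale starting at $0$ is indistinguishable from $0$.
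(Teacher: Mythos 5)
Your proof is correct and lands on the same basic strategy as the paper --- feeding truncated versions of the coordinate functions into the martingale problem (\ref{15}) --- but the mechanics differ at both steps in ways worth noting. For the martingale property, the paper does not use concentration on $C_TH(\mathcal{SP})$ at all: it truncates with cutoffs $\eta_k \in C^2_c(\mathbb{R}^n)$ and stopping times $\sigma_k = \inf\{t \in [0,T]: \|e_t\|_{\ell^2}\ge k\}$, which works for an arbitrary solution $\bar{Q}$ of ($\ell^2$-MGP) but only yields that $M_i$ is a continuous \emph{local} $\bar{Q}$-martingale; the upgrade to a true martingale is left implicit there, and in full generality it genuinely needs something like your boundedness observation, since $\sup_{t\le T}|p_i\circ e_t|$ need not be $\bar{Q}$-integrable for a general $\bar{Q}\in\mathcal{P}(C_T\ell^2)$. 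Your exact-truncation argument via $|p_i\circ e_t|\le 2^{-i}$ on the support delivers the stated (genuine) martingale property directly, at the price of importing a hypothesis not present in the lemma's statement --- though it is satisfied by the $\bar{Q}$ to which the lemma is actually applied (Proposition \ref{Prop summary Step2}), so this is a fair trade, and your localization fallback reproduces exactly what the paper's own proof establishes. For the covariation, the routes genuinely differ: you plug the (truncated) product $p_ip_j$ into (\ref{15}) and compare the resulting decomposition with the Itô integration-by-parts decomposition of $(p_i\circ e_t)(p_j\circ e_t)$, concluding via the fact that a continuous finite-variation local martingale vanishing at $0$ is identically $0$; the paper instead proves the quadratic-variation formula
\begin{equation*}
\langle\langle M^{\bar{F}}\rangle\rangle_t = \int_0^t \big\langle \bar{\nabla}\bar{F}(e_s), \bar{A}(s,e_s)\bar{\nabla}\bar{F}(e_s)\big\rangle_{\ell^2}\,ds
\end{equation*}
for every $\bar{F}\in\mathcal{F}C^2_b(\ell^2)$ by the carré-du-champ argument (that $(M^{\bar{F}}_t)^2-\int_0^t \bar{\mathbf{L}}^{(2)}_s\bar{F}^2(e_s)-2\bar{F}(e_s)\bar{\mathbf{L}}^{(2)}_s\bar{F}(e_s)\,ds$ is a martingale, citing Stroock) and then obtains (\ref{17}) by localization and polarization. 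Your version is more direct --- it produces the cross-covariation $\langle\langle M_i,M_j\rangle\rangle$ in one stroke and skips the polarization step --- while the paper's yields the covariation formula for all test functions rather than just coordinates; both ultimately rest on the same two standard facts, namely Itô's product rule and uniqueness of the continuous semimartingale decomposition.
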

\begin{proof}
	For $i,j \geq 1$, let $n \geq \text{max}(i,j)$, consider $p^n_i: \mathbb{R}^n \to \mathbb{R}$, $p^n_i(x) = x_i$ and let
	$$\bar{F}^n_i : \ell^2 \to \mathbb{R}, \,\, \bar{F}^n_i(z) = p^n_i \circ \pi_n(z).$$ Note that $\bar{F}^n_i = p_i$ on $\ell^2$, independent of $n \geq \text{max}(i,j)$. For $k \geq 1$, introduce the stopping time $\sigma_k :=\text{inf}\{t \in [0,T]: ||e_t||_{\ell^2} \geq k \}$ with respect to the canonical filtration on $C_T\ell^2$. Clearly, $\sigma_k \nearrow +\infty$ pointwise. Consider $\eta_k \in C^{2}_c(\mathbb{R}^n)$ such that $\eta_k(x) = 1$ for  $|x| \leq k+1$.\\ Since $\partial_{k}p^n_i = \delta_{ki}$ and $\partial_{kl}p^n_i = 0$ for $k,l \leq n$, we have
	$$M_i(t) = \bar{F}^n_i\circ e_t - \int_0^t \bar{\nabla} \bar{F}^n_i \circ e_s \cdot \bar{B}(s,e_s)+\frac{1}{2}D^2\bar{F}^n_i\circ e_s : \bar{A}(s,e_s)ds$$ and, setting $\bar{F}^{n,k}_i := (\eta_kp^n_i) \circ \pi_n \in \mathcal{F}C^2_b(\ell^2)$, 
	$$M_i(\sigma_k \wedge t) = \bar{F}^{n,k}_i\circ e_{t\wedge \sigma_k} - \int_0^{t\wedge \sigma_k} \bar{\nabla} \bar{F}^{n,k}_i \circ e_s \cdot \bar{B}(s,e_s)+\frac{1}{2}D^2\bar{F}^{n,k}_i\circ e_s : \bar{A}(s,e_s)ds.$$Since the latter is a continuous $\bar{Q}$-martingale for each $k \geq 1$, it follows that $M_i$ is a continuous local $\bar{Q}$-martingale. Concerning (\ref{17}), it suffices to prove that for any $\bar{F} \in \mathcal{F}C^2_b(\mathcal{\ell}^2)$, $\bar{F} = f \circ\pi_n$, we have
	 \begin{equation}\label{Covar aux}
	\langle \langle M^{\bar{F}} \rangle \rangle_t = \int_0^t \big \langle \bar{\nabla}\bar{F}(e_s), \bar{A}(s,e_s)\bar{\nabla}\bar{F}(e_s)ds \big \rangle_{\ell^2}ds, 
	\end{equation}with
	$$M^{\bar{F}}_t := \bar{F}\circ e_t - \int_0^t   \bar{\nabla}\bar{F}(e_s) \cdot \bar{B}(s,e_s) + \frac{1}{2}D^2\bar{F}(e_s): \bar{A}(s,e_s)ds.$$Indeed, from here (\ref{17}) follows by considering (\ref{Covar aux}) for $\bar{F}^{n,k}_i$, localization of the local martingale $M_i$ and polarization for the quadratic (co-)variation. Concerning (\ref{Covar aux}), it is standard (cf. \cite[p.73,74]{stroock_1987}) to use Itô's product rule to obtain that
	$$t\mapsto (M^{\bar{F}}_t)^2 - \int_0^t \bar{\mathbf{L}}_s^{(2)}\bar{F}^2(e_s)-2\bar{F}(e_s)\bar{\mathbf{L}}_s^{(2)}\bar{F}(e_s)ds $$
	is a continuous $\bar{Q}$-martingale on $C_T\ell^2$, where we denote by $\bar{\mathbf{L}}_t^{(2)}\bar{F}(e_s)$ the integrand of the integral term in the definition of $M^{\bar{F}}$. A straightforward calculation yields
	$$\int_0^t \bar{\mathbf{L}}_s^{(2)}\bar{F}^2(e_s)-2\bar{F}(e_s)\bar{\mathbf{L}}_s^{(2)}\bar{F}(e_s)ds = \int_0^t \big \langle \bar{\nabla}\bar{F}(e_s), \bar{A}(s,e_s)\bar{\nabla}\bar{F}(e_s) \big \rangle_{\ell^2}ds,$$
	which completes the proof.
\end{proof}
We summarize the results of this step in the following proposition.
\begin{prop}\label{Prop summary Step2}
	Let $(\bar{\Gamma}_t)_{t \leq T}$ be a weakly continuous solution to (\ref{ell2-FPKE}) such that $\bar{\Gamma}_t(H(\mathcal{SP})) = 1$ for each $t \in [0,T]$. Then, there exists a solution $\bar{Q} \in \mathcal{P}(C_T\ell^2)$ to the martingale problem ($\ell^2$-MGP) such that $\bar{Q}$ is concentrated on $C_TH(\mathcal{SP})$ with $\bar{Q}\circ e_t^{-1} = \bar{\Gamma}_t$ for each $t \in [0,T]$. Further, the results of Lemma \ref{Lem martingale and covar} apply to $\bar{Q}$. 
\end{prop}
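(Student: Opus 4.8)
The plan is to obtain the proposition by chaining together the superposition principle of Proposition \ref{Prop SPprincip ell2} with the two lemmas that precede and include Lemma \ref{Lem martingale and covar}, so that the only genuine work is the careful bookkeeping of the passage between $\ell^2$ and $\mathbb{R}^{\infty}$. First I would invoke Remark \ref{Rem meas P SP} to regard the given weakly continuous solution $(\bar{\Gamma}_t)_{t \leq T} \subseteq \mathcal{P}(\ell^2)$ as a weakly continuous curve in $\mathcal{P}(\mathbb{R}^{\infty})$; since the extended coefficients $\bar{B}$ and $\bar{A}$ vanish on $\mathbb{R}^{\infty}\setminus \ell^2$ and each $\bar{\Gamma}_t$ places no mass there, the defining identity \eqref{14} and the integrability bound \eqref{2_global_int_l2-FPKE} transfer verbatim, so that $(\bar{\Gamma}_t)_{t \leq T}$ solves the $\mathbb{R}^{\infty}$-extended version of (\ref{ell2-FPKE}).

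Next I would apply Proposition \ref{Prop SPprincip ell2} to produce a measure $\bar{Q} \in \mathcal{P}(C_T\mathbb{R}^{\infty})$ solving the $\mathbb{R}^{\infty}$-extended martingale problem with $\bar{Q}\circ e_t^{-1} = \bar{\Gamma}_t$ for all $t \in [0,T]$. The hypothesis $\bar{\Gamma}_t(H(\mathcal{SP})) = 1$ is exactly what is needed to invoke the (unnamed) lemma immediately preceding Lemma \ref{Lem martingale and covar}: using closedness of $H(\mathcal{SP})$ in $\mathbb{R}^{\infty}$ together with the intersection representation $C_TH(\mathcal{SP}) = \bigcap_{q \in [0,T]\cap \mathbb{Q}}\{e_q \in H(\mathcal{SP})\}$, one concludes $\bar{Q}(C_TH(\mathcal{SP})) = 1$. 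The measurability inclusion $\mathcal{B}(C_T\ell^2)_{\upharpoonright C_TH(\mathcal{SP})} \subseteq \mathcal{B}(C_T\mathbb{R}^{\infty})$ recorded there then lets me view $\bar{Q}$ as an element of $\mathcal{P}(C_T\ell^2)$, supported on $C_TH(\mathcal{SP})$ and solving ($\ell^2$-MGP) in the sense of Definition \ref{Def ell2-MGP sol}, and the relation $\bar{Q}\circ e_t^{-1} = \bar{\Gamma}_t$ is unaffected by this reinterpretation.

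Finally, with $\bar{Q}$ now a genuine solution of ($\ell^2$-MGP) on $\ell^2$, Lemma \ref{Lem martingale and covar} applies directly, yielding that each $M_i$ in \eqref{16} is a continuous real-valued $\bar{Q}$-martingale with covariation \eqref{17}; this gives the concluding assertion of the proposition. I do not expect a real obstacle here, since the statement is a collation of already-established facts; the only delicate point is ensuring that the $\ell^2$-versus-$\mathbb{R}^{\infty}$ identification is kept consistent for the measures, the extended coefficients, and the test-function class simultaneously, which is precisely what Remark \ref{Rem meas P SP} and the preceding lemma are designed to handle.
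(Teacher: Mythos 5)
Your proposal is correct and follows exactly the paper's route: the proposition is stated there as a summary of Step 2, obtained precisely by extending $(\bar{\Gamma}_t)_{t\leq T}$ to $\mathbb{R}^{\infty}$ via Remark \ref{Rem meas P SP}, invoking Proposition \ref{Prop SPprincip ell2}, restricting $\bar{Q}$ back to $\mathcal{P}(C_T\ell^2)$ via the unnamed lemma on concentration on $C_TH(\mathcal{SP})$, and then applying Lemma \ref{Lem martingale and covar}. Your bookkeeping of the $\ell^2$-versus-$\mathbb{R}^{\infty}$ identifications (vanishing of the extended coefficients off $\ell^2$, the rational-time intersection argument, and the trace $\sigma$-algebra inclusion) matches the paper's treatment.
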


\textbf{Step 3: From ($\ell^2$-MGP) to (\ref{SNLFPKE}):} For a given solution $\bar{Q} \in \mathcal{P}(C_T\ell^2)$ to ($\ell^2$-MGP), set
$$\mathcal{C}:= \mathcal{B}(C_T\ell^2) \bigvee \mathcal{N}_{\bar{Q}}$$
and
$$\mathcal{C}_t := \sigma(e_s, s\leq t) \bigvee \mathcal{N}_{\bar{Q}}$$
for $t \leq T$, where $\mathcal{N}_{\bar{Q}}$ denotes the collection of all subsets of sets $N \in \mathcal{B}(C_T\ell^2)$ with $\bar{Q}(N) = 0$. Of course, $\mathcal{C}$ and $\mathcal{C}_t$ depend on $\bar{Q}$, but we suppress this dependence in the notation. Without further mentioning, we understand such $\bar{Q}$ as extended to $\mathcal{C}$ in the canonical way. Then, $(C_T\ell^2,\mathcal{C},(\mathcal{C}_t)_{t \leq T}, \bar{Q})$ is a complete filtered probability space. Clearly, $(t,\gamma) \mapsto \bar{\Sigma}(t,e_t(\gamma))$ is $\mathcal{C}_t$-progressively measurable from $[0,T]\times C_T\ell^2$ to $L(\mathbb{R}^{d_1},\ell^2)$, the space of bounded linear operators from $\mathbb{R}^{d_1}$ to $\ell^2$.

\begin{rem}\label{Rem extend prob space}
	We extend $(C_T\ell^2,\mathcal{C},(\mathcal{C}_t)_{t \leq T}, \bar{Q})$ as follows. Let $(\Omega', \mathcal{F}'', (\mathcal{F}''_t)_{t \leq T}, P)$ be a complete filtered probability space with a real-valued $\mathcal{F}''_t$-Wiener process $\beta$ on it, define
	\begin{equation*}
	\Omega := C_T\ell^2 \otimes \underset{l \geq 1}{\bigotimes}\Omega', \,\, \mathcal{F}' := \mathcal{C}\otimes \underset{l \geq 1}{\bigotimes}\mathcal{F}'', \,\, \mathcal{F}_t' := \mathcal{C}_t \otimes \underset{l \geq 1}{\bigotimes}\mathcal{F}''_t, \,\, \mathbb{P}' := \bar{Q}\otimes \underset{l \geq 1}{\bigotimes}P,
	\end{equation*}let $\mathcal{F}$ and $\mathcal{F}_t$ be the $\mathbb{P}'$-completion of $\mathcal{F}'$ and $\mathcal{F}_t'$, respectively, and denote the canonical extension of $\mathbb{P}'$ to $\mathcal{F}$ by $\mathbb{P}$. Further, we denote the Wiener process $\beta$ on the $i$-th copy of $\Omega'$ by $\beta_i$ and extend each $\beta_i$ to $\Omega$ by $\beta_i(\omega) := \beta_i(\omega_i)$ for $\omega = \gamma \times (\omega_i)_{i \geq 1} \in \Omega$. Similarly, we extend each projection $e_t$ from $C_T\ell^2$ to $\Omega$ via $e_t(\omega) := e_t(\gamma)$ for $\omega$ as above, but keep the same notation for this extended process. Obviously, $(e_t)_{t \leq t}$ is a continuous, $\mathcal{F}_t$-adapted process on $\Omega$ and each $\beta_i$ is an $\mathcal{F}_t$-Wiener process on $\Omega$ under $\mathbb{P}$. Moreover, $(e_t)_{t \leq T}$ and $(\beta_i)_{i \geq 1}$ are independent on $\Omega$ with respect to $\mathbb{P}$ by construction. Further, it is clear that the process $M_i$ as in (\ref{16}) is a $\mathbb{P}$-martingale with respect to $\mathcal{F}_t$ for each $i \geq 1$ with covariation as in (\ref{17}) and that $(t,\gamma) \mapsto \bar{\Sigma}(t,e_t(\gamma)) \in L(\mathbb{R}^{d_1},\ell^2)$ is $\mathcal{F}_t$-progressively measurable on $[0,T]\times \Omega$.
\end{rem}

Finally, we need the following result, which is a special case of Theorem 2, \cite{Ondrejat_StochInt_Repr}.
\begin{prop}\label{Prop Step3 Ondrejat appl}
	Let $\bar{Q} \in \mathcal{P}(C_T\ell^2)$ be a solution to the martingale problem ($\ell^2$-MGP). Then, there exists a complete filtered probability space with an adapted $d_1$-dimensional Wiener process $W = (W^{\alpha})_{\alpha \leq d_1}$ and an $\ell^2$-valued adapted continuous process $X = (X_t)_{t \leq T}$ such that the law of $X$ on $C_T\ell^2$ is $\bar{Q}$ and for $i \geq 1$ and $t \in [0,T]$, we have a.s.
	\begin{equation}\label{extra2}
	p_i \circ X_t - p_i \circ X_0 - \int_0^t \bar{B}_i(s,X_s)ds = \sum_{\alpha =1}^{d_1}\int_0^t \bar{\Sigma}^{\alpha}_i(s,X_s)dW_s^{\alpha}
	\end{equation}and the exceptional set can be chosen independent of $t$ and $i$.
\end{prop}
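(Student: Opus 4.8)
The plan is to read Proposition \ref{Prop Step3 Ondrejat appl} as a martingale representation statement and to deduce it from the cited theorem of Ondreját, once the martingale problem ($\ell^2$-MGP) has been repackaged as a single $\ell^2$-valued continuous martingale whose quadratic variation has the precise integrand form $\bar{\Sigma}\bar{\Sigma}^{*}$. I would work on the complete filtered probability space $(\Omega,\mathcal{F},(\mathcal{F}_t)_{t \leq T},\mathbb{P})$ built in Remark \ref{Rem extend prob space}, which already carries the canonical process together with the family of independent auxiliary Wiener processes $(\beta_i)_{i \geq 1}$. Setting $X_t := e_t$, the law of $X$ on $C_T\ell^2$ equals the first marginal $\bar{Q}$ by construction, so that requirement is automatic and only the representation identity \eqref{extra2} remains to be established.

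First I would assemble the real martingales into one $\ell^2$-valued object. By Lemma \ref{Lem martingale and covar} (whose hypotheses persist for $\bar{Q}$ on this enlarged space, see Remark \ref{Rem extend prob space}), each $M_i$ as in \eqref{16} is a continuous $\mathcal{F}_t$-martingale with covariation $\langle\langle M_i,M_j\rangle\rangle_t = \int_0^t \bar{A}_{ij}(s,X_s)\,ds$ as in \eqref{17}. To view $M := (M_i)_{i \geq 1}$ as genuinely $\ell^2$-valued and as an $\ell^2$-martingale, I would invoke the uniform bounds $|\bar{B}_i| \leq C2^{-i}$ and $|\bar{\Sigma}^{\alpha}_i| \leq C'2^{-i}$ established above, together with $X_t \in \ell^2$ $\mathbb{P}$-a.s. (immediate since $\bar{Q} \in \mathcal{P}(C_T\ell^2)$); these guarantee that both $X_t$ and the drift integral $\int_0^t \bar{B}(s,X_s)\,ds$ lie in $\ell^2$ and that $\bar{\Sigma}(s,X_s) \in L(\mathbb{R}^{d_1},\ell^2)$ is progressively measurable. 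Recalling $\bar{A}_{ij} = \langle \bar{\Sigma}_i,\bar{\Sigma}_j\rangle_{d_1} = \sum_{\alpha=1}^{d_1}\bar{\Sigma}^{\alpha}_i\bar{\Sigma}^{\alpha}_j$, the operator quadratic variation of $M$ is exactly $\langle\langle M\rangle\rangle_t = \int_0^t \bar{\Sigma}(s,X_s)\bar{\Sigma}(s,X_s)^{*}\,ds$.

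This is precisely the structural hypothesis of Ondreját's representation theorem in its finite-dimensional-noise special case. Applying it, with the independent Wiener processes $(\beta_i)$ of Remark \ref{Rem extend prob space} supplying the extra randomness, produces a $d_1$-dimensional $\mathcal{F}_t$-Wiener process $W = (W^{\alpha})_{\alpha \leq d_1}$ with $M_t = \int_0^t \bar{\Sigma}(s,X_s)\,dW_s$. Reading this componentwise gives $M_i(t) = \sum_{\alpha=1}^{d_1}\int_0^t \bar{\Sigma}^{\alpha}_i(s,X_s)\,dW^{\alpha}_s$, and substituting the definition \eqref{16} of $M_i$ yields \eqref{extra2}; continuity in $t$ and the countability of the index $i$ then let one choose a single exceptional null set.

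The step I expect to require the most care is matching the degeneracy of $\bar{\Sigma}(s,X_s)$ with the hypotheses of the cited theorem. Since $\bar{\Sigma}(s,X_s)$ need not be surjective onto $\ell^2$ nor bounded below, one cannot simply invert it to define $W$ directly from $M$; the Wiener process must be reconstructed by combining the directions genuinely spanned by $M$ with independent noise in the degenerate directions. This is exactly the role of the enlargement by $(\beta_i)_{i \geq 1}$ in Remark \ref{Rem extend prob space}, and it is the reason the statement is only a special case of the general result rather than an elementary pathwise inversion.
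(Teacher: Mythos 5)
Your proposal is correct and follows essentially the same route as the paper: both work on the enlarged space of Remark \ref{Rem extend prob space} with the auxiliary Wiener processes $(\beta_i)_{i\geq 1}$, take $X_t = e_t$, feed the martingales $M_i$ of Lemma \ref{Lem martingale and covar} together with $g_s = \bar{\Sigma}(s,e_s)$ into Ondreját's Theorem 2, and conclude via continuity in $t$ and countability in $i$. The only difference is cosmetic: you first assemble the $M_i$ into a genuine $\ell^2$-valued martingale with operator quadratic variation $\int_0^t \bar{\Sigma}\bar{\Sigma}^{*}\,ds$, whereas the paper applies the theorem directly in its cylindrical form with $D = \{p_i,\ i \geq 1\}$, making that repackaging unnecessary.
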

To see this, consider Theorem 2 of \cite{Ondrejat_StochInt_Repr} with $X = \ell^2$, $U_0 = \mathbb{R}^{d_1}$, $D = \{p_i, i \geq 1\}$, the processes $M(p_i)$ given by $M_i$ as in (\ref{16}) on the probability space $\Omega$ of Remark \ref{Rem extend prob space} and 
$$g_s = \bar{\Sigma}(s,e_s) \in L(\mathbb{R}^{d_1}, \ell^2).$$ These choices fulfill all requirements of \cite{Ondrejat_StochInt_Repr}. In this case, the $\ell^2$-valued process $X$ is given by $X_t = e_t$ on $\Omega$. Since all terms in (\ref{extra2}) are continuous in $t$, the exceptional set may indeed by chosen independently of $t \in [0,T]$ and $i \geq 1$.\\ 
\\
The proof of Theorem \ref{main thm stoch case} now follows from the above three step-scheme as follows.\\
\\
\textit{Proof of Theorem \ref{main thm stoch case}:} Let $(\Gamma_t)_{t \leq T}\subseteq \mathcal{P}(\mathcal{SP})$ be a weakly continuous solution to (\ref{P-FPKE}). By Lemma \ref{Lem of Step1 stochCase} of Step 1, the weakly continuous curve of Borel probability measures on $\ell^2$
$$\bar{\Gamma}_t := \Gamma_t \circ H^{-1}, \,\, t\in [0,T]$$
solves (\ref{ell2-FPKE}) and each $\bar{\Gamma}_t$ is concentrated on $H(\mathcal{SP})$. By Proposition \ref{Prop summary Step2} of Step 2, there exists a solution $\bar{Q} \in \mathcal{P}(C_T\ell^2)$ to the martingale problem ($\ell^2$-MGP), which is concentrated on $C_TH(\mathcal{SP})$ such that
$$\bar{Q} \circ e_t^{-1} = \bar{\Gamma}_t, \,\, t\in [0,T].$$
Further, Lemma \ref{Lem martingale and covar} applies to $\bar{Q}$. By Lemma \ref{Lem martingale and covar} and Proposition \ref{Prop Step3 Ondrejat appl} of Step 3, there is a $d_1$-dimensional $\mathcal{F}_t$-adapted Wiener process $W = (W^{\alpha})_{\alpha \leq d_1}$ and an $\mathcal{F}_t$-adapted process $X$ on some complete filtered probability space $(\Omega, \mathcal{F}, (\mathcal{F}_t)_{t \leq T}, \mathbb{P})$, which fulfill (\ref{extra2}) and $X \in C_TH(\mathcal{SP})$ $\mathbb{P}$-a.s. such that $\bar{Q}$ is the law of $X$.\\
Possibly redefining $X$ on a  $\mathbb{P}$-negligible set (which preserves (\ref{extra2}) and its adaptedness, the latter due to the completeness of the underlying filtered probability space), we may assume $X_t(\omega) = H(\mu_t(\omega))$ for some $\mu_t(\omega) \in \mathcal{SP}$ for each $(t,\omega) \in [0,T]\times \Omega$. The continuity of $H^{-1}: H(\mathcal{SP}) \to \mathcal{SP}$ and $t \mapsto X_t(\omega)$ implies vague continuity of 
\begin{equation}\label{19}
t \mapsto \mu_t(\omega) = H^{-1} \circ X_t(\omega)
\end{equation}
for each $\omega \in \Omega$ and $\mathcal{F}_t$-adaptedness of the $\mathcal{SP}$-valued process $(\mu_t)_{t \leq T}$. Considering (\ref{extra2}), $X_t = H(\mu_t)$ and the definition of $\bar{B}$ and $\bar{\Sigma}^{\alpha}_i$, we obtain, recalling $p_i(H(\nu)) = \nu(h_i)$ for each $\nu \in \mathcal{SP}$,
\begin{equation*}
\mu_t(h_i)-\mu_0(h_i)- \int_0^t B_i(s,\mu_s)ds = \sum_{\alpha = 1}^{d_1}\int_0^t \Sigma^{\alpha}_i(s,\mu_s)dW^{\alpha}_s, \,\, t \leq T
\end{equation*}$\mathbb{P}$-a.s. for each $i \geq 1$. From here, it follows by Lemma \ref{Lem aux H} (i) that $(\mu_t)_{t \leq T}$ is a solution to (\ref{SNLFPKE}) as in Definition \ref{Def sol SNLFPKE}. Further, 
$$\mathbb{P}\circ \mu_t^{-1} = (\mathbb{P}\circ X_t^{-1})\circ (H^{-1})^{-1} = \bar{\Gamma}_t \circ (H^{-1})^{-1} = \Gamma_t \circ H^{-1} \circ (H^{-1})^{-1} = \Gamma_t.$$
It remains to prove the final assertion of the theorem. To this end, note that $\Gamma_0(\mathcal{P}) = 1$ implies $\bar{\Gamma}_0(H(\mathcal{P})) = 1$ and hence $\mu_0 \in \mathcal{P}$ $\mathbb{P}$-a.s. with $\mu_0$ as in (\ref{19}). From here, the assertion follows by Lemma \ref{Lem consv of mass stochastic}. \qed
\begin{rem}
The particular type of noise we consider for \eqref{SNLFPKE} was partially motivated by \cite{Coghi2019StochasticNF}, where the natural connection of equations of type \eqref{SNLFPKE} to interacting particle systems with common noise was investigated. Other types of noise terms may be treated in the future, including a possible extension to infinite-dimensional ones. In particular, Proposition \ref{Prop Step3 Ondrejat appl} via \cite[Thm.2]{Ondrejat_StochInt_Repr} in the final step of the proof seems capable of such extensions, since the latter is an infinite-dimensional result.
\end{rem}

\bibliography{Superpos_FPKE_FPKE2_SPd_incl._SNLPFKE_V1} 
\vspace{1cm}
\textit{Marco Rehmeier} Faculty of Mathematics, Bielefeld University, Universitätsstraße 25, 33615 Bielefeld, Germany\\
\textit{E-mail address: mrehmeier@math.uni-bielefeld.de}

\end{document}